\DeclareMathOperator{\Supp}{Supp}
\DeclareMathOperator{\BP}{\mathsf{BP}}
\DeclareMathOperator{\SL}{SL}
\DeclareMathOperator{\bp}{\mathsf{bp}}
\DeclareMathOperator{\tbp}{\widetilde{\mathsf{bp}}}
\DeclareMathOperator{\cl}{cl}
\newcommand\precdot{\mathrel{\ooalign{$\prec$\cr
  \hidewidth\raise0.001ex\hbox{$\cdot\mkern0.6mu$}\cr}}}
\newtheorem{theorem}{Theorem}[section]
\newtheorem{def-prop}[theorem]{Definition-Proposition}
\newtheorem{prop}[theorem]{Proposition}
\newtheorem{conj}[theorem]{Conjecture}
\newtheorem{lemma}[theorem]{Lemma}
\newtheorem{cor}[theorem]{Corollary}
\theoremstyle{definition}
\newtheorem{ex}[theorem]{Example}
\newtheorem{defin}[theorem]{Definition}
\theoremstyle{remark}
\newtheorem{remark}{Remark}
\Crefname{remark}{Remark}{Remarks}
\Crefname{prop}{Proposition}{Propositions}
\begin{document}
\title[Billey--Postnikov posets]{Billey--Postnikov posets, rationally smooth Schubert varieties, and Poincar\'e duality}

\author{Christian Gaetz}
\address{Department of Mathematics, University of California, Berkeley, CA, USA.}
\email{\href{mailto:gaetz@berkeley.edu}{{\tt gaetz@berkeley.edu}}}

\author{Yibo Gao}
\address{Beijing International Center for Mathematical Research, Peking University, Beijing, China.}
\email{\href{mailto:gaoyibo@bicmr.pku.edu.cn}{{\tt gaoyibo@bicmr.pku.edu.cn}}}

\date{\today}

\begin{abstract}
\emph{Billey--Postnikov (BP) decompositions} govern when Schubert varieties $X(w)$ decompose as bundles of smaller Schubert varieties. We further develop the theory of BP decompositions and show that, in finite type, they can be recognized by pattern conditions and are indexed by the order ideals of a poset $\bp(w)$ that we introduce; we conjecture that this holds in any Coxeter group. We then apply BP decompositions to show that, when $X(w)$ is rationally smooth and $W$ simply laced, the Schubert structure constants $c_{u,v}^w$ satisfy a triangularity property, yielding a canonical involution on the Schubert cells of $X(w)$ respecting Poincar\'{e} duality. We also classify the rationally smooth Bruhat intervals in finite type (other than $E$) which admit generalized \emph{Lehmer codes}, answering questions and conjectures of Billey--Fan--Losonczy, Bolognini--Sentinelli, and Bishop--Mili\'{c}evi\'{c}--Thomas. Finally, we show that rationally smooth Schubert varieties in infinite type need not have Grassmannian BP decompositions, disproving conjectures of Richmond--Slofstra and Oh--Richmond.
\end{abstract}
\keywords{}
\maketitle

\section{Introduction}

Let $G$ be a complex reductive group (or, more generally, a Kac--Moody group) with Borel subgroup $B$. The $B$-orbit closures in the generalized flag variety $G/B$ are called \emph{Schubert varieties} $X(w)$ and are indexed by elements $w$ of the Weyl group $W$. These varieties are important for many reasons. For one, the classes $\{[X(w)] \mid w \in W\}$ give a basis for $H^*(G/B)$, the study of whose structure constants $c_{u,v}^w$ has long defined the field of \emph{Schubert calculus}. On the other hand, in geometric representation theory, \emph{Kazhdan--Lusztig theory} \cite{kazhdan-lusztig} relates singularities of the $X(w)$, characters of Verma modules of $\mathfrak{g}$, and distinguished bases of the Hecke algebra $\mathcal{H}(W;q)$.

The parabolic subgroups $P_J \supset B$ are indexed by subsets $J$ of the simple reflections $S$ generating $W$. Under the natural projection $\pi^J: G/B \twoheadrightarrow G/P_J$, it is known that $X(w)$ maps onto a $B$-orbit closure $X^J(w^J) \coloneqq \overline{Bw^JP_J/P_J} \subset G/P_J$; the $X^J(w^J)$ are also known as (parabolic) Schubert varieties. The fibers of $\pi^J |_{X(w)}$ in general vary in dimension. However, if they are of constant dimension, then all fibers are isomorphic to $X(w_J)$, so that $X(w)$ is the total space of a fiber bundle whose base and fiber are both smaller Schubert varieties; in this case the decomposition $w=w^Jw_J$ is called a \emph{Billey--Postnikov (BP) decomposition}. Aside from the geometric applications discussed below, BP decompositions have also found applications to enumeration, Bruhat combinatorics, and Kazhdan--Lusztig polynomials \cite{vertex-transitive, min-heights, richmond-slofstra-staircase}; see Oh--Richmond for a survey \cite{oh-richmond-chapter}.

For $G=\SL_n$, the varieties $G/P_J$ and $G/B$ are naturally identified with the classical (partial) flag varieties. In this setting, Lakshmibai--Sandhya \cite{lakshmibai-sandhya} famously characterized the smooth $X(w)$ as those for which $w$ avoids the \emph{permutation patterns} $3412$ and $4231$; the study of permutation patterns has since developed into an active subfield of combinatorics in its own right. This result was generalized to all finite types by Billey and Postnikov \cite{billey-postnikov, billey-A-B}, who (implicitly) introduced BP decompositions. Richmond and Slofstra \cite{richmond-slofstra-fiber-bundle} developed the theory of BP decompositions and used them to show that smooth Schubert varieties in finite type are iterated bundles of smooth Schubert varieties in $G/P$ for $P$ a \emph{maximal} parabolic subgroup, generalizing a result of Ryan and Wolper \cite{ryan, wolper}. BP decompositions for $P$ maximal are called \emph{Grassmannian} BP decompositions, since in this case $G/P$ is a Grassmannian when $G=\SL_n$. 

The correct level of generality for these results is when $X(w)$ is \emph{rationally smooth}, which can be defined in terms of the vanishing of certain local intersection cohomology groups. For our purposes, we take the equivalent condition, that the Kazhdan--Lusztig polynomial $P_{e,w}(q)$ is equal to $1$, as the definition. This has the added benefit of allowing us to define rational smoothness for an element $w$ in an arbitrary (not-necessarily-crystallographic) Coxeter group. In finite simply-laced type, it is a result of Carrell--Peterson and Carrell--Kuttler \cite{carrell-peterson, carrell-kuttler} that smoothness and rational smoothness are equivalent. For any $G$, it is also known \cite{bjorner-ekedahl} that rational smoothness is equivalent to the palindromicity of the Poincar\'{e} polynomial of $X(w)$. 

We now describe our main results. The first of these identify surprising new structure in the BP decompositions of any (not-necessarily rationally smooth) $w$. We show, in finite type, how to detect that $J$ induces a BP decomposition of $X(w)$ using \emph{$J$-star patterns}, which we introduce. And we show that the set of $J$ inducing a BP decomposition is closed under union and intersection, so that these decompositions are indexed by the order ideals of the \emph{Billey--Postnikov poset} $\bp(w)$ that we construct. We then return to the setting of rationally smooth $w$ and prove several kinds of results about Schubert structure constants $c_{u,v}^w$, about \emph{generalized Lehmer codes}, and about the failure of the Richmond--Slofstra result in infinite type.

\subsection{Patterns for BP decompositions and BP posets}

We first characterize BP decompositions by the avoidance of what we call \emph{$J$-star patterns}; see \Cref{sub:characterization} for precise definitions. This generalizes a result of Alland--Richmond \cite{alland-richmond-BP-pattern} which applies for $W=S_n$ when $J$ is maximal. We write $\BP(w)$ for the set of $J \subset S$ inducing a BP decomposition; see \Cref{def:BP} for a Coxeter-theoretic criterion.

\begin{theorem}
\label{thm:intro-bp-pattern-characterization}
Let $W$ be a finite Weyl group and $w\in W$. Then $J \in \BP(w)$ if and only if $w$ avoids all $J$-star patterns.
\end{theorem}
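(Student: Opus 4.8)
The plan is to deduce the equivalence from the Coxeter-theoretic criterion for membership in $\BP(w)$ recalled in \Cref{def:BP} --- namely that $w = w^Jw_J$ is a BP decomposition exactly when $w^J$ is the (necessarily unique) maximal element of $W^J \cap [e,w]$, equivalently when the support condition of Billey--Postnikov holds. Each $J$-star pattern is, by design, a small Weyl-group element $w'$ carrying a marked node set $J' \subsetneq S'$ for which $J' \notin \BP(w')$; so the theorem asserts that these minimal obstructions already account for every failure of the BP property. I would prove the two directions separately, the ``only if'' direction being a stability statement and the ``if'' direction a localization/classification argument.

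For ``only if'' (in contrapositive form) the key is a \emph{restriction lemma}: if $w$ contains the $J$-star pattern $(w', S', J')$ --- via the interval-style, marked-node notion of containment that the definition of $J$-star pattern uses, not plain Bruhat pattern containment --- then $J \in \BP(w)$ implies $J' \in \BP(w')$. To prove it I would check that a pattern embedding of root subsystems is compatible with parabolic decomposition, so that it carries $(w')^{J'}$ and $w'_{J'}$ to the corresponding factors attached to $w$ and $J$, and then that the maximal-element (or support) characterization of the BP property transports along the embedding; here one uses the known fact that such embeddings respect Bruhat order and supports. Since a $J$-star pattern has $J' \notin \BP(w')$ by construction, containing one forces $J \notin \BP(w)$.

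For ``if'', suppose $J \notin \BP(w)$; I must exhibit a $J$-star pattern inside $w$. Writing $w = w^Jw_J$, the failure of the criterion produces an explicit witness: a simple reflection $s \in J$ lying in $\Supp(w^J)$ but ``blocked'' from being carried into $W_J$ by some node $t \in S \setminus J$, equivalently two elements of $W^J \cap [e,w]$ with no common upper bound in $W^J \cap [e,w]$. I would then argue that this obstruction localizes to a bounded-rank piece of the Dynkin diagram --- the ``star'' formed by $t$, the path or branch connecting it to $s$, and their immediate neighbours --- and that, invoking the classification of finite root systems, the possible local pictures form exactly the list of $J$-star patterns, with $w$ genuinely containing one of them in the marked-node/interval sense (the interval bookkeeping being what guarantees the Bruhat and support data embed, not merely the underlying group element). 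It may be convenient to first handle $J = S \setminus \{s\}$, the Grassmannian case nearest to Alland--Richmond, and then reduce a general $J$ to such maximal ones using closure of $\BP(w)$ under intersection.

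I expect the main obstacle to be the completeness of the list of $J$-star patterns in the ``if'' direction: verifying that \emph{every} violation of the BP criterion, over all finite Weyl groups and all shapes of $J$ near the nodes of $S \setminus J$, is captured by one of the prescribed patterns is a finite but intricate case analysis, and --- as already visible in Alland--Richmond's $S_n$ result --- one cannot simply match supports and lengths but must track interval data, so the bookkeeping is where the real work lies. The ``only if'' direction, once the restriction lemma is in place, should be comparatively routine.
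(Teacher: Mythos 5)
There is a genuine gap, and it begins with the definition. The paper's $J$-star patterns (\Cref{def:Jstar}) are not ``small Weyl-group elements carrying a marked node set $J'$ with $J' \notin \BP(w')$,'' and containment is not an interval-style pattern embedding. A $J$-star is a configuration of roots $(\beta, c_1\gamma_1,\ldots,c_k\gamma_k)$ in the \emph{ambient} root system, with $\beta \in \Phi_J^+$, each $\gamma_i \in \Phi^+\setminus\Phi_J^+$, and all partial sums $\beta+\sum_{i\in I}c_i\gamma_i$ positive roots; and $w$ \emph{contains} it exactly when $\beta \notin I(w)$ but $\beta+\sum_i c_i\gamma_i \in I(w)$ --- a condition on the inversion set of the single element $w$, with no smaller Weyl group in sight. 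Your entire architecture (a restriction lemma transporting the BP property along root-subsystem embeddings; a classification of ``local pictures'' as minimal non-BP elements) is built for a different notion of pattern and does not engage with the statement as formulated. Even granting your notion, the two load-bearing steps --- the restriction lemma and the completeness of the pattern list --- are exactly the parts you defer, and the second is where you yourself locate ``the real work.''

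The paper's actual argument is a direct inversion-set computation. For the ``if'' direction (contrapositive): failure of $\Supp(w^J)\cap J \subset D_L(w_J)$ gives a simple root $\alpha \in \Delta_J$ and a minimal $\tau \in I(w^J)$ supported on $\alpha$; a technical lemma (\Cref{lem:simple-Jstar}, proved by case analysis over the classical types plus a computer check for exceptional types, iterating over roots rather than group elements) writes $\tau = \alpha + \sum c_i\gamma_i$ as a $J$-star, and conjugating the whole configuration by $w_J^{-1}$ produces a $J$-star contained in $w$. The converse reverses this: from a contained $J$-star one applies $w_J$ to land a simple root $\alpha_j \leq w_J\beta$ with $s_{\alpha_j}\notin D_L(w_J)$ and $s_{\alpha_j}\in\Supp(w^J)$. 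Note also that your proposed reduction of general $J$ to maximal $J$ via closure of $\BP(w)$ under intersection is circular relative to this paper: \Cref{prop:BP-intersection} is itself deduced from \Cref{thm:intro-bp-pattern-characterization}, and the authors remark that no direct proof of intersection-closure from the Coxeter-theoretic criterion is known.
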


We apply \Cref{thm:intro-bp-pattern-characterization} to prove \Cref{thm:intro-union-intersection} below. It is not clear how one could prove \Cref{thm:intro-union-intersection} directly from the geometric or Coxeter-theoretic characterizations of BP decompositions.

\begin{theorem}
\label{thm:intro-union-intersection}
Let $W$ be a finite Weyl group and $w\in W$. If $J, K \in \BP(w)$, then we have $J \cap K, J \cup K \in \BP(w)$. In particular, $\BP(w)$ is a distributive lattice. 
\end{theorem}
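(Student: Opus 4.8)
The plan is to deduce \Cref{thm:intro-union-intersection} from the pattern characterization of \Cref{thm:intro-bp-pattern-characterization}, rather than working directly with the Coxeter-theoretic definition of $\BP(w)$. Recall that $J \in \BP(w)$ if and only if $w$ avoids every $J$-star pattern, and this is a purely combinatorial/local condition. So the task reduces to the following: assuming that $w$ avoids all $J$-star patterns and all $K$-star patterns, show that $w$ avoids all $(J\cap K)$-star patterns and all $(J\cup K)$-star patterns. I would first carefully unpack the definition of a $J$-star pattern from \Cref{sub:characterization}, since the combinatorics of how the pattern set depends on $J$ is the crux. A $J$-star pattern should be an embedding of a small parabolic subgroup (or of a specific "bad" element in it) whose relevant simple generators interact with $J$ in a prescribed way; in type $A$ with $J$ maximal this recovers the Alland--Richmond patterns $3412$, $4231$, $34\bar{1}2$, etc.

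The key structural point I would aim to establish is a kind of monotonicity/compatibility: that the set of $J$-star patterns varies with $J$ in a way that is controlled by unions and intersections. Concretely, I expect to prove two lemmas. \textbf{Intersection lemma:} every $(J\cap K)$-star pattern for $w$ ``comes from'' either a $J$-star pattern or a $K$-star pattern of $w$ — i.e., if $w$ contained a $(J\cap K)$-star pattern, one could promote it to a $J$-star pattern or a $K$-star pattern, contradicting $J, K \in \BP(w)$. \textbf{Union lemma:} symmetrically (or perhaps by a duality between $\BP$ and its complement under $w \mapsto w^{-1}$ or $w \mapsto w_0 w$, if such a duality exists in the paper's framework), every $(J\cup K)$-star pattern of $w$ promotes to a $J$-star or $K$-star pattern. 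Alternatively, the union case might follow from the intersection case by a complementation identity of the form $\BP(w)$ being closed under $J \mapsto S \setminus (\text{something})$, or by the observation that BP decompositions compose, so a $(J \cup K)$-decomposition factors through a $J$-decomposition followed by a $K$-type decomposition inside the parabolic. I would look for whichever of these the paper's earlier setup makes cleanest.

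Once closure under $\cap$ and $\cup$ is established, the final sentence is immediate: a finite collection of subsets of $S$ that is closed under pairwise union and intersection, partially ordered by inclusion, is a sublattice of the Boolean lattice $2^S$, hence a distributive lattice (it inherits distributivity from $2^S$). One should also note it contains $\emptyset$ and $S$ (the trivial BP decompositions $w = e \cdot w$ and $w = w \cdot e$), so it is a bounded distributive lattice; this is presumably what is packaged as $\bp(w)$ via Birkhoff's theorem, with $\BP(w)$ being its lattice of order ideals.

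The main obstacle I anticipate is the promotion step in the intersection/union lemmas: given a $(J\cap K)$-star pattern embedded in $w$, one must produce from it an honest $J$-star or $K$-star pattern, and it is not a priori obvious that the ``extra'' generators in $J \setminus K$ or $K \setminus J$ interact correctly with the embedded subword. This will likely require a case analysis on the shape of the star pattern and the position of the relevant simple roots, and perhaps an auxiliary combinatorial fact about how star patterns behave under enlarging $J$ (e.g., that a $J$-star pattern restricts to a $J'$-star pattern for $J' \subseteq J$ in an appropriate sense, or that the "star" decoration only becomes more restrictive). If a clean promotion is not available, the fallback is to go back to the Coxeter-theoretic criterion in \Cref{def:BP} for one of the two operations and combine it with the pattern characterization for the other — but as the authors remark, a fully direct argument seems hard, so I expect the pattern machinery of \Cref{thm:intro-bp-pattern-characterization} to be essential on both sides.
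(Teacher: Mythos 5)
Your overall strategy matches the paper's: both the intersection and the union closure are deduced from the $J$-star characterization of \Cref{thm:intro-bp-pattern-characterization} via ``promotion'' arguments, and your intersection lemma is exactly the paper's \Cref{prop:BP-intersection} — one takes a $(J\cap K)$-star contained in $w$ with a minimal number of rays, sorts the rays $\gamma_i$ according to whether they lie in $\Phi_{J}^+$, $\Phi_{K}^+$, or neither, and absorbs the rays lying in $\Phi_{J}^+$ into the center to produce a $J$-star (or $K$-star) contained in $w$. Your closing remarks about sublattices of $2^S$ and Birkhoff's theorem are also exactly how the paper concludes.

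However, there is a genuine gap in your treatment of the union case, and your proposed shortcuts would fail. There is no complementation or duality available: already for $w=4231$ one has $\BP(w)=\{\emptyset,\{1\},\{3\},[3]\}$, which is not closed under any natural complementation, and neither $w\mapsto w^{-1}$ nor $w\mapsto w_0w$ interchanges unions with intersections of BP subsets. The union case is \emph{not} symmetric to the intersection case, because a $(J\cup K)$-star has its center $\beta$ in the larger set $\Phi_{J\cup K}^+$, which need not lie in $\Phi_J^+$ or $\Phi_K^+$, so the star cannot simply be reinterpreted as a $J$-star or $K$-star. The paper's resolution (\Cref{lem:union-helper-decomposition} and \Cref{prop:BP-union}) is to introduce the notion of a \emph{non-BP witness} $(\beta,\tau)$ and to prove a root-decomposition lemma: if $\beta\notin\Phi_{J}^+\cup\Phi_{K}^+$, then $\beta$ can be written as a sum $\beta_1+\cdots+\beta_l$ of positive roots, each of which is the center of a non-BP witness for $J\cup K$, $J$, or $K$ with the same tip $\tau$; one then takes $\beta$ minimal in the root poset and uses biclosedness of $I(w)$ to find some $\beta_j\notin I(w)$, reaching a contradiction either with minimality or with $J,K\in\BP(w)$. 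The decomposition lemma itself is proved by a stabilization argument reducing to bounded rank plus a computer check in types $B_9$, $C_9$, $D_{10}$, $E_8$, $F_4$, $G_2$. So the ``missing idea'' you correctly anticipated as the main obstacle is precisely where the real work lies, and it requires machinery (non-BP witnesses, additive decompositions of roots, and finite verification) that your outline does not supply.
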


\Cref{thm:intro-union-intersection} implies that there is a set partition $\mathcal{S}_w$ of $S$ and a partial order on its blocks, which we call the \emph{reduced BP poset} $\bp(w)$, such that the elements of $\BP(w)$ are precisely the unions of the blocks lying in the order ideals of $\bp(w)$.

We can use \Cref{thm:intro-bp-pattern-characterization} to characterize when $w$ admits a chain of \emph{maximal} BP decompositions and therefore when $X(w)$ is an iterated bundle of Grassmannian Schubert varieties. This recovers a classification due to Alland and Richmond for $W=S_n$ \cite{alland-richmond-BP-pattern} and includes the well-studied case of smooth Schubert varieties. When $w$ admits such a chain, all blocks of $\mathcal{S}_w$ are singletons, so $\bp(w)$ is a partial order on $S$. The poset $\bp(w)$ also generalizes aspects of the \emph{staircase diagram} construction \cite{richmond-slofstra-staircase} which applies in the smooth case.

We conjecture that \Cref{thm:intro-union-intersection} in fact holds in any Coxeter group.

\begin{conj}
\label{conj:union-intersection}
Let $W$ be any Coxeter group and let $w \in W$. If $J, K \in \BP(w)$, then $J \cap K, J \cup K \in \BP(w)$. 
\end{conj}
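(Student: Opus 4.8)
The natural route is to lift the $J$-star pattern characterization of \Cref{thm:intro-bp-pattern-characterization} from finite Weyl groups to all Coxeter groups, and then to repeat whatever argument produces \Cref{thm:intro-union-intersection} from it. The reason to expect this to work is that closure of $\BP(w)$ under $\cap$ and $\cup$ should, through a pattern characterization, become a purely combinatorial statement: if $w$ avoids every $J$-star pattern and every $K$-star pattern then it avoids every $(J\cap K)$-star and $(J\cup K)$-star pattern. The $J$-star patterns are ``local'': they are built from Coxeter-pattern embeddings into parabolic (or, more generally, reflection) subgroups of small rank, and for a fixed $w$ only finitely many are relevant since $\Supp(w)$ is finite. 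So their definition should transfer essentially verbatim to any $W$, and there is no obvious reason the combinatorial implication between families of patterns should be type-dependent.

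Concretely the plan has three steps. First, one formulates $J$-star patterns for an arbitrary Coxeter group $W$, checking that the finite-type definition refers to nothing beyond small-rank parabolic data. Second, one proves that for any $W$ and $w$ one has $J\in\BP(w)$ if and only if $w$ avoids all $J$-star patterns. The implication ``$J\in\BP(w)\Rightarrow$ avoidance'' should be proved in contrapositive form, extracting from an embedded $J$-star pattern an explicit obstruction to the defining condition of \Cref{def:BP} — for instance an element $u\in W_J$ with $u\le w$ but $u\not\le w_J$ — and this extraction is a local computation insensitive to finiteness. Third, one reruns the derivation of \Cref{thm:intro-union-intersection}, now over arbitrary $W$; this is the combinatorial manipulation of pattern families described above, combined with the elementary reduction $J\in\BP(w)\Leftrightarrow \Supp(w)\cap J\in\BP(w)$.

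The main obstacle is the second step, specifically the converse implication ``$w$ avoids all $J$-star patterns $\Rightarrow J\in\BP(w)$.'' In finite type this presumably relies on structure unavailable in general: a classification of the minimal obstructions across the finitely many Dynkin types, and/or an induction down chains of maximal parabolics, whose geometry is far better behaved in finite type. A type-free substitute would likely need an induction on $\ell(w)$ (or on $|S\setminus J|$) driven by the composition property of BP decompositions — for $J'\subseteq J$ with $J\in\BP(w)$ one has $J'\in\BP(w)$ if and only if $J'\in\BP(w_J)$, where $\ell(w_J)<\ell(w)$ unless $J=S$. That same composition property suggests a partial direct attack on the conjecture: after reducing to $J,K\subseteq\Supp(w)$, the statement $J\cap K\in\BP(w)$ becomes equivalent to $J\cap K\in\BP(w_J)$, which one hopes to reach inductively, although producing a second $\BP$-subset of $w_J$ from $K\in\BP(w)$ is not obvious. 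The union case admits no such downward reduction and is likely the genuine crux. Since infinite-type Schubert varieties are shown later in this paper to behave differently from their finite-type counterparts — rationally smooth ones there can lack Grassmannian BP decompositions — one should expect that genuinely new phenomena must be ruled out, and no clean reduction of the conjecture to the finite-type \Cref{thm:intro-union-intersection} appears to be available.
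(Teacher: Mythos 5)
The statement you were asked to prove is \Cref{conj:union-intersection}, which the paper itself leaves \emph{open}: it is stated as a conjecture, proved only for finite Weyl groups (\Cref{thm:intro-union-intersection}) and for Coxeter groups of rank at most three (\Cref{thm:rank-three-union-intersection}). Your proposal does not prove the statement either — it is a plan together with an accurate diagnosis of why the plan fails — so there is a genuine gap in the sense that no complete argument is given. That said, your diagnosis of the obstruction is essentially correct and matches the state of affairs in the paper. The $J$-star machinery is intrinsically root-theoretic: \Cref{def:Jstar} is phrased in terms of sums of positive roots in a crystallographic root system, and the two technical inputs (\Cref{lem:simple-Jstar} and \Cref{lem:union-helper-decomposition}) are established by case analysis over the finite types plus computer checks on $B_9, C_9, D_{10}, E_8, F_4, G_2$. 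None of this transfers to infinite or non-crystallographic $W$, where the positive system is infinite and the needed decompositions of roots are not available by finite inspection. Your observation that the union case is the crux is also consistent with the paper: intersection at least admits a downward reduction via $\BP(w_J)$, while union does not.

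One point worth adding: the only general-Coxeter-group evidence the paper actually supplies is the rank-three case, and its proof uses none of the pattern technology. It rests instead on \Cref{prop:singleton-classification} ($\{s\}\in\BP(w)$ iff $s\in D_R(w)$ or $s\notin\Supp(w)$), which holds for arbitrary $W$, combined with an elementary case analysis on descents and supports. If you want to make partial progress beyond restating the finite-type result, that elementary descent-based route — rather than an attempted transfer of $J$-star patterns — is the one the paper itself found tractable, and it is where I would direct your effort. As written, your proposal should be presented as a discussion of the conjecture's difficulty, not as a proof.
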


In \Cref{sec:rank-three}, we prove \Cref{conj:union-intersection} for all $W$ of rank at most three.

\subsection{Poincar\'{e} duality in smooth Schubert varieties}

The Schubert variety $X(w)$ is paved by open affine Schubert cells $\Omega_v \coloneqq BvB/B$ for $v \leq w$ in \emph{Bruhat order} on $W$. The geometrical context of the BP poset can be seen as follows:
\begin{prop}
\label{prop:intro-linear-extension}
Let $W$ be a finite Weyl group and let $w \in W$ be a rationally smooth element. Any linear extension of the BP poset $\bp(w)$ determines a decomposition of $X(w)$ as an iterated bundle of rationally smooth Grassmannian Schubert varieties. 
\end{prop}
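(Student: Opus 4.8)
The plan is to use the theorem of Richmond--Slofstra --- that rationally smooth Schubert varieties in finite type are iterated bundles of Grassmannian Schubert varieties --- together with the distributive lattice structure of $\BP(w)$ from \Cref{thm:intro-union-intersection}, and then induct on $|S|$, peeling off one simple reflection at a time. The first point to establish is that for rationally smooth $w$ every block of $\mathcal{S}_w$ is a singleton, so that $\bp(w)$ is a partial order on $S$ itself. Indeed, by Richmond--Slofstra \cite{richmond-slofstra-fiber-bundle} such a $w$ admits a chain of \emph{maximal} BP decompositions: there is a chain $\emptyset = J_0 \subsetneq J_1 \subsetneq \cdots \subsetneq J_n = S$ with each $J_k \in \BP(w)$ and $|J_k \setminus J_{k-1}| = 1$. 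Such a chain is saturated of length $|S|$ in the distributive lattice $\BP(w)$, and since a finite distributive lattice is graded with rank equal to its number of join-irreducibles --- which for $\BP(w)$ is $|\mathcal{S}_w|$ --- this forces $|\mathcal{S}_w| = |S|$, so every block is a singleton. Hence a linear extension of $\bp(w)$ is just an ordering $s_{\sigma(1)}, \dots, s_{\sigma(n)}$ of $S$ for which every initial segment $J_k \coloneqq \{s_{\sigma(1)}, \dots, s_{\sigma(k)}\}$ lies in $\BP(w)$, and these $J_k$ form a saturated chain in $\BP(w)$.

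Fixing such a linear extension, I would then induct on $n = |S|$, the cases $n \le 1$ being vacuous. Put $J \coloneqq J_{n-1} = S \setminus \{s_{\sigma(n)}\}$ and let $w = w^J w_J$ be the associated BP decomposition. Then $\pi^J|_{X(w)} \colon X(w) \to X^J(w^J)$ is a Zariski-locally trivial fiber bundle whose base $X^J(w^J) \subseteq G/P_J$ is a Grassmannian Schubert variety (since $P_J$ is a maximal parabolic) and whose fiber is $X(w_J)$, a Schubert variety in the flag variety of $W_J$. Because rational smoothness is a local condition and the intersection cohomology sheaf of a product is the external tensor product of those of the factors, a Zariski-locally trivial fiber bundle is rationally smooth exactly when its base and fiber are; hence $X^J(w^J)$ and $X(w_J)$ are both rationally smooth. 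Now $w_J$ is rationally smooth in $W_J$, and the truncated chain $\emptyset = J_0 \subsetneq \cdots \subsetneq J_{n-1} = J$ is a chain of BP decompositions of $w_J$ inside $W_J$; being saturated of length $|J| = n-1$ it corresponds to a linear extension of $\bp(w_J)$, so by the inductive hypothesis $X(w_J)$ is an iterated bundle of rationally smooth Grassmannian Schubert varieties. Stacking this decomposition of the fiber beneath $\pi^J|_{X(w)}$ exhibits $X(w)$ as such an iterated bundle, completing the induction.

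Two steps carry the real weight. The first is the reduction, via Richmond--Slofstra and the Birkhoff correspondence for distributive lattices, to the statement that $\bp(w)$ is a partial order on $S$ with all blocks singletons: this is exactly what makes each step of a linear extension remove a single simple reflection, hence what makes each base a Grassmannian rather than a higher partial flag variety. The second, which I expect to be the main technical obstacle, is the compatibility of BP decompositions with passage to a standard Levi subgroup --- that a chain of BP decompositions of $w$ restricts to a chain of BP decompositions of the fiber element $w_J$, equivalently that $\{K \in \BP(w) : K \subseteq J\} = \BP(w_J)$ whenever $J \in \BP(w)$ --- together with the routine but necessary check that rational smoothness descends along the bundle map. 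Both facts sit within the general theory of BP decompositions of \cite{richmond-slofstra-fiber-bundle}, but establishing them cleanly requires carefully tracking minimal coset representatives across nested parabolic subgroups.
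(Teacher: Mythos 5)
Your proposal is correct and follows essentially the same route as the paper: Richmond--Slofstra's maximal BP decomposition theorem plus the rational smoothness of $w_J$ (\Cref{prop:wJ-rs}) and the restriction property $\BP_J(w_J)=\{K\in\BP(w):K\subseteq J\}$ (\Cref{prop:bp-of-wJ-is-order-ideal}) — the two facts you correctly flag as carrying the weight — yield the iterated decomposition, with all blocks of $\mathcal{S}_w$ being singletons. The only cosmetic differences are that you deduce the singleton property by counting join-irreducibles of the distributive lattice rather than by iteratively peeling off maximal elements, and you argue rational smoothness of base and fiber geometrically via intersection cohomology where the paper uses monotonicity of Kazhdan--Lusztig polynomials and palindromicity.
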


A handy application of the technology of the BP poset is that we can determine many Schubert structure constants under a rationally smooth element. We write $c_{u,v}^w$ for the positive integer structure constants for multiplication of the Schubert basis of $H^*(G/B)$; that is, $[X(w_0u)][X(w_0v)]=\sum_w c_{u,v}^w [X(w_0w)]$. We write $[e,w]_k$ for the elements of length $k$ in the Bruhat interval $[e,w]$, which index the Schubert basis of $H^{2k}(G/B)$.

\begin{theorem}\label{thm:intro-structure-constant-matrix}
Let $W$ be a finite Weyl group of simply-laced type, and let $w\in W$ be a (rationally) smooth element. For any $0\leq k\leq \ell(w)$, we can linearly order $[e,w]_k$ and $[e,w]_{\ell(w)-k}$ so that the matrix $(c_{u,v}^w)$, whose rows are indexed by $u\in[e,w]_k$ and columns are indexed by $v\in[e,w]_{\ell(w)-k}$, is upper unitriangular.
\end{theorem}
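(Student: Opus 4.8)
The plan is to induct on the length of $w$ using a maximal BP decomposition $w = w^J w_J$ with $J$ maximal, which exists by \Cref{prop:intro-linear-extension} once $w$ is rationally smooth and simply laced. The geometric content is that $X(w) \to X^J(w^J)$ is a Zariski-locally trivial fiber bundle with fiber $X(w_J)$, both base and fiber being rationally smooth Schubert varieties of smaller dimension. On cohomology this gives an isomorphism of $H^*(X^J(w^J))$-modules $H^*(X(w)) \cong H^*(X^J(w^J)) \otimes H^*(X(w_J))$, and — crucially — the Schubert basis is compatible with this: every $v \le w$ factors uniquely as $v = v^J v_J$ with $v^J \le w^J$ a minimal coset representative and $v_J \le w_J$, lengths add, and the Schubert class $[X(v)]$ pulls back/pushes forward compatibly. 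So the first step is to record precisely how the structure constants $c_{u,v}^w$ decompose: I expect a Leibniz-type formula expressing $c_{u,v}^w$ in terms of the structure constants of the base $X^J(w^J)$ and those of the fiber $X(w_J)$, together with the action of $H^*(G/P_J)$ on $H^*(G/B)$ (i.e.\ restriction of Schubert classes from $G/B$ to the fiber, a Pieri/Chevalley-type ingredient). One clean way to package this: work in the "dual" coordinates where $c_{u,v}^w$ counts the coefficient of $[X(w_0 w)]$ in $[X(w_0 u)][X(w_0 v)]$, note that Poincaré duality on $X(w)$ pairs $[e,w]_k$ with $[e,w]_{\ell(w)-k}$, and observe that the bundle structure respects this pairing because it is a product of the Poincaré pairings on base and fiber.

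The second step is to build the linear orders on $[e,w]_k$ and $[e,w]_{\ell(w)-k}$ from the inductive data. Using $v = v^J v_J$, stratify $[e,w]_k = \bigsqcup_{i} \{ v : \ell(v_J) = i,\ \ell(v^J) = k - i\}$, and within each stratum take the order induced (lexicographically, say) first by a fixed linear order on $[e,w^J]_{k-i}$ coming from the inductive hypothesis applied to the base, then by a fixed order on $[e,w_J]_i$ from the inductive hypothesis applied to the fiber; then concatenate the strata in a suitable order of $i$. On the target side $[e,w]_{\ell(w)-k}$ one does the same with $\ell(w)-k = (\ell(w^J) - (k-i)) + (\ell(w_J) - i)$, matching strata via the duality $i \leftrightarrow \ell(w_J) - i$, $(k-i) \leftrightarrow \ell(w^J) - (k-i)$. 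The claim is then that in these orders the matrix $(c_{u,v}^w)$ is block upper triangular with respect to the stratification, and each diagonal block is, up to the unimodular change of basis accounting for the $H^*(G/P_J)$-action, a tensor product (or a triangular perturbation thereof) of the unitriangular matrices furnished by the inductive hypotheses for base and fiber — hence itself upper unitriangular — while the off-diagonal blocks vanish or are strictly upper because a nonzero $c_{u,v}^w$ forces $u^J \le v$-type containments that pin down the stratum and push it upward. The base case $\ell(w) = 0$ is trivial, as is the case where $w = w_J$ already lies in a proper parabolic (apply induction inside that parabolic) or where $J = S$ (then $w^J = e$ and nothing to do).

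The main obstacle I anticipate is proving the triangularity of the diagonal blocks, i.e.\ that the "mixing" of base and fiber Schubert classes under cup product is itself triangular. The product $[X(v)] = (\pi^J)^*[X^J(v^J)] \cdot \iota_*^{\mathrm{fib}}(\text{something})$ is clean, but multiplying two such classes introduces the restriction of base classes to the fiber (a Chevalley-formula term), so the diagonal block is not literally $A \otimes B$ for the inductive matrices $A, B$ but rather $A \otimes B$ plus terms that are strictly-upper in one of the two factors. I would argue this perturbation is strictly upper triangular by a degree/support argument: any correction term arising from restricting a positive-degree base class to the fiber strictly lowers the fiber-degree contribution, moving one to an earlier stratum in the fiber order, hence landing strictly above the diagonal once the orders are set up with fiber-degree as the finer key. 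Making this precise — identifying exactly which Chevalley terms appear and checking they respect the chosen order — is where the real work lies; the simply-laced hypothesis should enter here (and via \Cref{thm:intro-structure-constant-matrix}'s reliance on rational smoothness $=$ smoothness) to control the multiplicities $P_{e,w} = 1$ and keep the relevant coefficients equal to $1$ on the diagonal, giving \emph{uni}triangularity rather than just triangularity. An alternative, possibly cleaner, route — which I would pursue in parallel — is to avoid the explicit Chevalley bookkeeping by instead using \Cref{prop:intro-linear-extension} to reduce, via the full iterated bundle, directly to the case $w = w_J$ maximal Grassmannian, where $[e,w]$ is thin enough that the structure constant matrix can be computed (or bounded) by hand using the known description of rationally smooth Grassmannian Schubert varieties.
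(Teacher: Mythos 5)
Your overall architecture (induct along the iterated Grassmannian BP bundle from \Cref{prop:intro-linear-extension}, stratify $[e,w]_k$ by the parabolic factorization $v=v^Jv_J$, and aim for a block-triangular matrix) matches the paper's, but there is a genuine gap exactly where you anticipate one, and the missing idea is a specific structural input rather than a Chevalley-term bookkeeping argument. The key fact the paper uses is that, because $W$ is simply laced and $w^J$ is $J$-rationally smooth, the Richmond--Slofstra classification forces $w^J=w_0(I)^{I\cap J}$ for some $I\subset S$; that is, the base $X^J(w^J)$ of the fibration is not merely a rationally smooth Grassmannian Schubert variety but an entire generalized flag variety $G_I/(G_I\cap P_{I\cap J})$. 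This is what makes the argument close: on a full flag variety, generic translates of Schubert varieties $X(x)$ and $X(y)$ are disjoint unless $y^{\vee}\leq x$ (with $y^\vee=w_0(I)yw_0(I\cap J)$) and meet transversely in a single point when $y^{\vee}=x$, which immediately gives the vanishing $c_{uv}^w=0$ unless $(u^J)^{\vee}\leq v^J$ and pins the ``base'' part of the diagonal entries to $1$. Your proposed substitute --- a Leibniz-type formula with Chevalley correction terms shown to be ``strictly upper by a degree/support argument'' --- is precisely the step you do not carry out, and it is not clear it can be carried out in that generality: without knowing the base is a full flag variety you have no Poincar\'e duality involution on $[e,w^J]^J$ to triangulate against, and \Cref{remark:F4-bijection-not-canonical} shows that for a smooth Grassmannian Schubert variety that is \emph{not} a full flag variety the structure-constant matrix genuinely fails to be triangularizable. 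Relatedly, your guess for where simple-lacedness enters (controlling multiplicities via $P_{e,w}=1$) is off target; it enters through the classification of $J$-rationally smooth quotient elements.

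A second, smaller gap: for the diagonal entries you need that the structure constant factors across the BP decomposition, $c_{uv}^w=c_{u^J,v^J}^{w^J}\cdot c_{u_J,v_J}^{w_J}$ when $(u^J)^\vee=v^J$; the paper invokes Richmond's multiplicativity theorem (equivalently the Belkale--Kumar degenerated product) for this, whereas your proposal leaves the diagonal block as ``$A\otimes B$ plus perturbations'' without a mechanism to identify the diagonal entries as products of the inductive diagonal entries. Your fallback route (reducing to a single Grassmannian factor and computing by hand) is not viable as stated, again because the relevant Grassmannian Schubert varieties need not have triangularizable pairing matrices outside the full-flag case.
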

\noindent \Cref{thm:intro-structure-constant-matrix} is proven in \Cref{sec:poincare-dual}.

A well-known characterization of rational smoothness for $w\in W$ is that the Bruhat interval $[e,w]$ is rank-symmetric. It is therefore natural to seek, for each rationally smooth $w\in W$ and $0\leq k\leq \ell(w)$, a bijection between $[e,w]_k$ and $[e,w]_{\ell(w)-k}$. In type $A$, for example, many such bijections can be constructed in an \emph{ad hoc} manner via the factorization of the rank generating function of $[e,w]$ (see, e.g. \cite{gasharov}). An important and surprising corollary to \Cref{thm:intro-structure-constant-matrix} is that in finite simply-laced type, there is a \emph{canonical} such bijection:
\begin{cor}
\label{cor:intro-bijection}
Let $W$ be a finite Weyl group of simply-laced type, and let $w\in W$ be a (rationally) smooth element. For all $0 \leq k \leq \ell(w)$, there is a \emph{unique} bijection $\phi:[e,w]_k \to [e,w]_{\ell(w)-k}$ such that $c_{u, \phi(u)}^w \neq 0$ for all $u \in [e,w]_k$.
\end{cor}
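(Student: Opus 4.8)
The plan is to derive the corollary as a short formal consequence of \Cref{thm:intro-structure-constant-matrix}. First I would observe that, since $w$ is rationally smooth, the Bruhat interval $[e,w]$ is rank-symmetric, so $|[e,w]_k| = |[e,w]_{\ell(w)-k}|$; call this common value $n$. Thus the matrix $M = (c_{u,v}^w)$ appearing in \Cref{thm:intro-structure-constant-matrix} is a genuine square $n \times n$ matrix, and we may fix linear orders $u_1 < \cdots < u_n$ on $[e,w]_k$ and $v_1 < \cdots < v_n$ on $[e,w]_{\ell(w)-k}$ witnessing that $M$ is upper unitriangular. Existence of $\phi$ is then immediate: setting $\phi(u_i) \coloneqq v_i$ gives a bijection with $c_{u_i, \phi(u_i)}^w = 1 \neq 0$ for all $i$.

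For uniqueness, suppose $\psi \colon [e,w]_k \to [e,w]_{\ell(w)-k}$ is any bijection with $c_{u, \psi(u)}^w \neq 0$ for every $u \in [e,w]_k$. Write $\psi(u_i) = v_{\sigma(i)}$ for a permutation $\sigma$ of $\{1,\dots,n\}$; the hypothesis on $\psi$ says exactly that $M_{i,\sigma(i)} = c_{u_i,v_{\sigma(i)}}^w \neq 0$ for all $i$. Since $M$ is upper triangular, every nonzero entry lies on or above the diagonal, so $\sigma(i) \geq i$ for all $i$. A permutation of $\{1,\dots,n\}$ with $\sigma(i)\geq i$ for all $i$ is necessarily the identity -- for instance because $\sum_i \sigma(i) = \sum_i i$ forces $\sum_i (\sigma(i)-i) = 0$ with every summand nonnegative. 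Hence $\sigma = \mathrm{id}$, i.e.\ $\psi = \phi$, which proves uniqueness.

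The whole difficulty of the corollary is therefore packaged into \Cref{thm:intro-structure-constant-matrix}; the only thing to be careful about in the deduction itself is that the structure-constant matrix is square, which is precisely the rank-symmetry of $[e,w]$ equivalent to rational smoothness. I anticipate no further obstacle: once \Cref{thm:intro-structure-constant-matrix} is in hand, the corollary reduces to the elementary fact that a square matrix which is upper unitriangular in suitable orderings admits a unique transversal of nonzero entries, namely the main diagonal.
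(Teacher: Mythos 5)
Your proposal is correct and is essentially the paper's own argument: the paper's proof is the one-line observation that an upper unitriangular matrix has a unique nonzero transversal, and your write-up simply unpacks this (squareness from rank-symmetry, existence from the unit diagonal, uniqueness from $\sigma(i)\geq i$ forcing $\sigma=\mathrm{id}$). No gaps.
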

We note that \Cref{thm:intro-structure-constant-matrix} and \Cref{cor:intro-bijection} are not true in multiply-laced type (see \Cref{remark:F4-bijection-not-canonical}).

\subsection{Generalized Lehmer codes}

As a result of the affine paving by Schubert cells, we can express the Poincar\'{e} polynomial $\mathcal{P}(w) \coloneqq \sum_{i=0}^{\ell(w)} \dim(H^{2i}(X(w)))q^i$ of $X(w)$ as $\mathcal{P}(w)=\sum_{v \leq w} q^{\ell(v)}$; we can take this as the definition of $\mathcal{P}(w)$ for $W$ non-crystallographic. It is a classical fact that for $W$ a finite Coxeter group with rank $r$ and longest element $w_0$ we have 
\begin{equation}
\label{eq:degrees}
\mathcal{P}(w_0) = \prod_{i=1}^{r} [d_i]_q,
\end{equation}
where the $d_1,\ldots,d_{r}$ are integer invariants called the \emph{degrees} of $W$ and $[d]_q$ denotes the $q$-integer $1+q+\cdots+q^{d-1}$.

For $W=S_n$, the formula (\ref{eq:degrees}) can be realized explicitly via the \emph{Lehmer code}, which can be interpreted as giving an order preserving bijection from a product $C_{d_1} \times \cdots \times C_{d_r}$ of chains of cardinalities $d_1,\ldots,d_r$ to Bruhat order on $W$. Gasharov showed \cite{gasharov} that, far less obviously, for $W=S_n$, the Bruhat interval $[e,w]$ below any rationally smooth element $w$ also admits an order preserving bijection from a product of chains (which we will henceforward just call a Lehmer code for $[e,w]$). Billey--Fan--Losonczy conjectured that Gasharov's result could be extended to all finite Weyl groups.

\begin{conj}[Billey--Fan--Losonczy \cite{billey-fan-losonczy}]
\label{conj:billey-fan-losonczy}
Let $W$ be a finite Weyl group and let $w \in W$ be rationally smooth, then $[e,w]$ admits a Lehmer code.
\end{conj}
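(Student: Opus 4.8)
The plan is to prove \Cref{conj:billey-fan-losonczy} by induction on the rank of $W$, using the iterated fiber bundle structure of \Cref{prop:intro-linear-extension} to reduce to Grassmannian Schubert varieties, and gluing the pieces together via a single order-theoretic observation. Since $w$ is rationally smooth, $\bp(w)$ is a partial order on $S$ (all blocks are singletons, because $X(w)$ is an iterated bundle of Grassmannian Schubert varieties in finite type), and any maximal element of $\bp(w)$ gives a maximal parabolic $P_J$ inducing a BP decomposition $w=w^Jw_J$ with $X(w)\to X^J(w^J)$ a fiber bundle whose base $X^J(w^J)$ is a rationally smooth Grassmannian Schubert variety and whose fiber $X(w_J)$ is a rationally smooth Schubert variety in the smaller-rank Weyl group $W_J$. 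The observation is that, for any parabolic decomposition, the multiplication map
\[
\mu\colon [e,w^J]^J\times[e,w_J]\longrightarrow W,\qquad (x,y)\longmapsto xy,
\]
is order-preserving ($x\le x'$ in $W^J$ and $y\le y'$ in $W_J$ imply $xy\le x'y'$, by concatenating reduced words and using $\ell(ab)=\ell(a)+\ell(b)$ for $a\in W^J$, $b\in W_J$), and that when $w=w^Jw_J$ is a BP decomposition $\mu$ restricts to an order-preserving \emph{bijection} onto $[e,w]$. Consequently a Lehmer code for the product poset $[e,w^J]^J\times[e,w_J]$ pulls back along $\mu$ to a Lehmer code for $[e,w]$.

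By the inductive hypothesis $[e,w_J]$ admits a Lehmer code, and products of chains are closed under direct products, so everything comes down to producing a Lehmer code for the Grassmannian layer $[e,w^J]^J$ — or, when that layer is recalcitrant, for the product of $[e,w_J]$ with a small stack of such layers. The clean sub-goal is: for $W$ finite, $L\subsetneq K\subseteq S$ with $|K\setminus L|=1$, and $u\in W_K^L$ with $X_K^L(u)$ rationally smooth, a Lehmer code for $[e,u]_K^L$. Here I would invoke the classification of rationally smooth Schubert varieties in $G/P$ with $P$ maximal — rectangular shapes in type $A$, the shifted/signed analogues in types $B$, $C$, $D$, and a short finite list in $F_4$, $G_2$, and $E$ — and, for each, write down explicitly a product of chains $C_{e_1}\times\cdots\times C_{e_t}$ with the $e_j$ read off from the classification datum, together with an order-preserving bijection onto $[e,u]_K^L$ built from the Young's-lattice (or shifted) model and a fixed reading-word linear extension.

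The step I expect to be the main obstacle is exactly this last one, because a rationally smooth Grassmannian Schubert variety need not admit a Lehmer code at all: already for $X^L(u)=\mathrm{Gr}(2,4)$ the interval $[e,u]^L$ is the lattice of partitions inside a $2\times2$ box, which has six elements but whose every maximal chain has five of them, so no product of chains can surject onto it by an order-preserving bijection — the only six-element products of chains are $C_6$, which carries a six-element chain that cannot be placed, and $C_2\times C_3$, whose maximal chains are too short. Thus the decomposition must be organized so as to avoid such defective layers in isolation: for instance $\mathrm{Gr}(2,4)$, arising as the base in the BP decomposition $w_0=3412\cdot2143$ of $w_0\in S_4$, is "repaired" by the $\mathbb{P}^1\times\mathbb{P}^1$ fiber over it, after which $\mathcal{P}(w_0)=[2]_q[3]_q[4]_q$ and a Lehmer code reappears (indeed here one can instead peel off the maximal parabolic with $J=\{s_1,s_2\}$, whose Grassmannian layer is the chain $\mathbb{P}^3$). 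The real content of the proof, then, is to show that for every rationally smooth $w$ one can choose which maximal element of $\bp(w)$ to peel off at each stage — equivalently, choose a linear extension of $\bp(w)$ and cut it into consecutive blocks — so that each block, being a short iterated bundle of Grassmannian layers and hence a Schubert variety in a product of iterated Levi subgroups, carries a Lehmer code; the map $\mu$ and the induction then assemble these block-level codes into one for $[e,w]$. Proving this block-level statement, and verifying it type by type, is where essentially all of the difficulty lies.
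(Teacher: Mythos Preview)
There is a genuine gap: the conjecture you are trying to prove is \emph{false}. As the paper records (citing \cite{cubulation,sentinelli-zatti}), the interval $[e,w_0]$ in type $F_4$ does not admit a Lehmer code, even though $w_0$ is (rationally) smooth. So no argument along the lines you sketch can succeed for all finite Weyl groups. You in fact put your finger exactly on the failure point: a rationally smooth Grassmannian layer need not itself admit a Lehmer code, and your hope that one can \emph{always} reorganize the linear extension of $\bp(w)$ and cut it into blocks each of which carries a Lehmer code is precisely what breaks down. In $F_4$ no such reorganization exists for $w=w_0$, because the rank generating function $\mathcal{P}(w_0)=\prod_i [d_i]_q$ with degrees $2,6,8,12$ cannot be realized by any order-preserving bijection from a product of chains to $[e,w_0]$.

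The paper does not prove \Cref{conj:billey-fan-losonczy}; it proves the corrected statement \Cref{thm:intro-lehmer-code}. The method of \Cref{prop:min-is-w0} is close in spirit to your outline: take a Grassmannian BP decomposition at a leaf $s$, use \Cref{prop:BP-gives-subposet-product} to get the order-preserving bijection you call $\mu$, and apply induction to $w_J$. The substantive difference is what happens to the Grassmannian layer. Rather than asserting that $[e,w^J]^J$ (or some short stack of such layers) always has a Lehmer code, the paper uses the Richmond--Slofstra classification to write $w^J=w_0(I)^{I\cap J}$ up to a short list of chain-shaped exceptions, re-expresses $w=w_0(I)\cdot u$ as a \emph{left} BP decomposition, and then reduces (outside type $E$) to two ingredients: the new \Cref{thm:An-Ak-lehmer} producing Lehmer codes for rationally smooth intervals in certain type-$A$ parabolic quotients, and the base cases $w=w_0(K)$, which are handled type by type --- affirmatively in $A_n,B_n,D_n,I_2(m),H_3$ and negatively in $F_4,H_4$. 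Your plan is the right skeleton, but the ``block-level statement'' you isolate as the crux is simply not true in general; the paper's contribution is to salvage exactly as much of it as survives.
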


\Cref{conj:billey-fan-losonczy} was resolved in the affirmative by Billey in types $A$ and $B$ \cite{billey-A-B}. In \cite{cubulation} Bishop--Mili\'{c}evi\'{c}--Thomas report on some computations showing several cases, namely $F_4, H_4,$ and $E_6$, in which $[e,w_0]$ does \emph{not} have a Lehmer code (there under the name \emph{cubulation}); the authors also ask to what extent these observations can be extended to rationally smooth intervals. Recent results of Bolognini--Sentinelli and Sentinelli--Zatti \cite{bolognini-sentinelli, sentinelli-zatti} construct a Lehmer code for $[e,w_0]$ in $D_n$ and show one does not exist for $F_4$; those authors likewise wonder about the general case.

In \Cref{thm:intro-lehmer-code} we resolve \Cref{conj:billey-fan-losonczy} in all types except $E$, and indeed also for the finite non-crystallographic groups. There are two classes of irreducible $W$: we resolve the conjecture in the affirmative for the infinite families as well as for $H_3$; and for the exceptional groups of types $H_4$ and $F_4,$ we show that Lehmer codes exist for rationally smooth $w$ \emph{except} $w_0$.

\begin{theorem}
\label{thm:intro-lehmer-code}
Let $W$ be an irreducible finite Coxeter group not of type $E$.
\begin{itemize}
    \item[(1)] If $W$ is of type $A_n, B_n,D_n,I_2(m),$ or $H_3$ then every rationally smooth element admits a Lehmer code;
    \item[(2)] If $W$ is of type $H_4$ or $F_4$, then a rationally smooth element $w$ admits a Lehmer code if and only if $w \neq w_0$.
\end{itemize}
\end{theorem}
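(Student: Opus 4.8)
The plan is to leverage the BP poset machinery together with a careful case analysis over the irreducible types. The crucial observation is that if $w$ admits a chain of maximal BP decompositions, then by \Cref{prop:intro-linear-extension} and its refinement, $X(w)$ decomposes as an iterated bundle of rationally smooth Grassmannian Schubert varieties, and one can hope to assemble a Lehmer code for $[e,w]$ from Lehmer codes for each Grassmannian factor. So the first step is to establish a general principle: if $w = w^J w_J$ is a BP decomposition and both $[e, w^J]$ (in $W/W_J$) and $[e, w_J]$ (in $W_J$) admit Lehmer codes, then so does $[e,w]$. This should follow because the fiber bundle structure on $X(w)$ induces a ``product-like'' structure on the Bruhat interval $[e,w]$ compatible with rank, so that $[e,w] \cong [e,w^J] \times [e,w_J]$ as graded posets in the relevant sense (this is essentially the content of the multiplicativity of Poincar\'e polynomials under BP decompositions), and a product of products of chains is a product of chains.

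With that reduction in hand, the argument splits by type. For the infinite families $A_n, B_n, D_n$ the result is already known (Gasharov/Billey for $A,B$; Bolognini--Sentinelli for $D_n$ at $w_0$, but for general rationally smooth $w$ in $D_n$ one needs the BP reduction to peel off factors until reaching either a maximal-parabolic Grassmannian piece handled directly, or a smaller-rank $D$, $A$, or $B$ piece handled inductively). For $I_2(m)$, every rationally smooth element of the dihedral group has a Bruhat interval that is already visibly a chain or a product of two chains, so this is immediate. For $H_3$, one checks the finitely many rationally smooth elements (or their BP-irreducible ``cores'') directly by computer, exhibiting an explicit product-of-chains structure in each case; the key point is that $w_0$ in $H_3$ \emph{does} have a Lehmer code (with $\mathcal{P}(w_0) = [2]_q[6]_q[10]_q$), unlike in $H_4$ and $F_4$. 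For $H_4$ and $F_4$: any rationally smooth $w \neq w_0$ has a nontrivial BP decomposition (this uses \Cref{thm:intro-bp-pattern-characterization} and the classification of rationally smooth elements — a rationally smooth non-$w_0$ element cannot have $\bp(w)$ a single block on all of $S$, since that would force $w$ to ``look like'' $w_0$ of an irreducible parabolic), so by the reduction it decomposes into strictly smaller pieces lying in parabolic subgroups of types among $\{A, B, D, G_2, H_3, I_2(m)\}$ — all of which have the Lehmer code property by the earlier cases — hence $w$ itself does. Meanwhile $w_0$ in $H_4$ and $F_4$ is handled by the known negative computations of Bishop--Mili\'cevi\'c--Thomas and Bolognini--Sentinelli (or a direct divisibility obstruction: $\mathcal{P}(w_0)$ does not factor as a product of $q$-integers matching any product of chains compatible with the interval's structure — in $F_4$ the degrees are $2,6,8,12$ and one shows no order-preserving bijection from $C_2 \times C_6 \times C_8 \times C_{12}$ to $[e,w_0]$ exists).

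The main obstacle I anticipate is the $D_n$ case for \emph{arbitrary} rationally smooth $w$ (not just $w_0$): one must verify that the BP reduction always terminates in pieces for which a Lehmer code is already available, which requires knowing that the Grassmannian Schubert varieties arising as factors — rationally smooth Schubert varieties in $D_n/P$ for $P$ maximal — each admit Lehmer codes. This in turn may require either citing/extending Billey's explicit code constructions to these minuscule and non-minuscule cominuscule-type Grassmannians, or a separate direct argument. A secondary subtlety is making precise the claim that a BP decomposition yields an isomorphism of graded posets $[e,w] \cong [e,w^J] \times [e,w_J]$ rather than merely an equality of rank generating functions: one wants an actual order-preserving bijection (and this is where the cleanest statement likely comes from the explicit section of $\pi^J$ provided by the BP structure, giving $v \mapsto (v^J, v_J)$), and one should double-check that the Lehmer code of the product genuinely order-embeds as a product of chains, which is automatic once the poset isomorphism is established. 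The remaining cases ($A_n$, $B_n$, $I_2(m)$, $H_3$, and the non-$w_0$ exceptional elements) should then follow routinely from the reduction plus known results plus finite computation.
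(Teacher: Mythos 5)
Your high-level strategy (peel off BP decompositions, combine Lehmer codes of the factors via the order-preserving bijection of \Cref{prop:BP-gives-subposet-product}, and fall back on the known computations for $w_0$ in each type) is the right one, and your worry about whether the bijection is an order isomorphism is a non-issue: an order-preserving bijection from a product is all one needs to compose Lehmer codes, and that is exactly what \Cref{prop:BP-gives-subposet-product} supplies. However, there is a genuine gap at precisely the point you flag and then leave unresolved. A BP decomposition $w=w^Jw_J$ produces \emph{two} factors: $[e,w_J]$, which lives in a parabolic subgroup and can be handled inductively, and $[e,w^J]^J$, which lives in a parabolic \emph{quotient} $W^J$ and is not an interval in any smaller Coxeter group. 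Your case analysis for $H_4$, $F_4$, and general $D_n$ treats the pieces as if they all lie in parabolic subgroups of smaller type; this does not dispose of the quotient factor, and without it the induction does not close. The paper resolves this with two ingredients you do not supply: \Cref{thm:An-Ak-lehmer}, which constructs Lehmer codes for rationally smooth intervals in the specific type $A$ parabolic quotients $W^{\{s_j,s_{k+1},\ldots,s_{n-1}\}}$ (proved by a separate induction splitting on whether $B_1$ or $B_2$ is empty and using left BP decompositions), and \Cref{prop:min-is-w0}, which shows that a Bruhat-minimal rationally smooth element without a Lehmer code must equal $w_0(\Supp(w))$. The proof of \Cref{prop:min-is-w0} is where the quotient factor is tamed: by the Richmond--Slofstra classification of $J$-rationally smooth quotient elements, either $[e,w^J]^J$ is a chain (the exceptional $B_n$, $F_4$, $I_2(m)$ cases), or $w^J=w_0(I)^{I\cap J}$, in which case the decomposition is restructured into a left BP decomposition $w=w_0(I)\cdot u$ and a diagram inspection reduces everything to \Cref{thm:An-Ak-lehmer} or to a chain quotient.

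Your claim that ``a rationally smooth $w\neq w_0$ cannot have $\bp(w)$ a single block'' is also not the relevant point: by \Cref{thm:rs-implies-gr-bp} every rationally smooth element in finite type has a Grassmannian BP decomposition, so nontrivial decompositions always exist; the obstruction is never existence but always whether the Grassmannian quotient factor admits a Lehmer code. Once \Cref{prop:min-is-w0} is in place, the theorem does follow exactly as you sketch from the known results at $w_0$ ($A_n/A_{n-1}$ and $B_n/B_{n-1}$ quotients are chains, Bolognini--Sentinelli for $D_n$, the $H_3$ computation, and the negative $F_4$, $H_4$ computations), so the missing content is concentrated in \Cref{thm:An-Ak-lehmer} and \Cref{prop:min-is-w0}.
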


Our proof of \Cref{thm:intro-lehmer-code} in \Cref{sec:lehmer} relies on a study of BP decompositions and the Lehmer codes that they may induce. A few computations (which might be tractable with the appropriate implementation) would suffice to extend our methods to also cover type $E$; see \Cref{rmk:type-E}. In \Cref{thm:An-Ak-lehmer} we also extend Billey's result by showing that rationally smooth intervals in certain parabolic quotients of type $A$ admit Lehmer codes.

\subsection{Grassmannian BP decompositions in infinite type}
It is natural to hope that the result of Richmond--Slofstra expressing rationally smooth Schubert varieties as iterated bundles of Grassmannian Schubert varieties could be extended to infinite type. Whether this is possible was posed as a question by Oh and Richmond \cite[Q.~8.1]{oh-richmond-chapter} and an affirmative answer was conjectured by Richmond and Slofstra \cite[Conj.~6.5]{richmond-slofstra-fiber-bundle}.

\begin{conj}[Richmond--Slofstra \cite{richmond-slofstra-fiber-bundle}; Oh--Richmond \cite{oh-richmond-chapter}]
\label{conj:infinite-type-grassmannian-bp}
Let $W$ be any Coxeter group and suppose that $w \in W$ is rationally smooth. Then $w$ has a Grassmannian BP decomposition.
\end{conj}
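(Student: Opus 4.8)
We will disprove \Cref{conj:infinite-type-grassmannian-bp} by exhibiting an explicit counterexample: an infinite Coxeter group $W$ together with a rationally smooth element $w \in W$ which, after replacing $W$ by the parabolic subgroup generated by the support of $w$, we may assume has full support $\Supp(w) = S$, and which admits no Grassmannian BP decomposition --- equivalently, $\BP(w)$ contains no maximal proper subset $J \subsetneq S$. Since in finite type the theorem of Richmond--Slofstra \cite{richmond-slofstra-fiber-bundle} produces a Grassmannian BP decomposition for every rationally smooth element, any such $W$ must be infinite; conversely, one example suffices to show that the finite-type picture genuinely fails in infinite type.

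The first step is to put the obstruction into a checkable form. By the Billey--Postnikov criterion --- equivalently \Cref{def:BP}, or the factorization of Poincar\'e polynomials $\mathcal{P}(w) = \big(\sum_{v \in W^J,\, v \le w^J} q^{\ell(v)}\big)\,\mathcal{P}(w_J)$ --- if $w = w^J w_J$ is a BP decomposition then (using that $\Supp(w) = S$ forces $w^J \ne e$, and that $w_J \ne e$ as well) the polynomial $\mathcal{P}(w)$ factors as a product of two polynomials of positive degree, each with nonnegative integer coefficients and constant term $1$, the second being $\mathcal{P}(w_J)$ for an element $w_J$ of the rank-$(|S|-1)$ parabolic $W_J$. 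Hence it suffices to find a rationally smooth $w$ of full support in an infinite $W$ for which none of the $|S|$ parabolic factorizations $w = w^{S\setminus s}\,w_{S\setminus s}$ meets this requirement; in the strongest form it would be enough for $\mathcal{P}(w)$ --- which is palindromic, since $[e,w]$ is rank-symmetric --- to admit no factorization of this shape at all.

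The second step is a search. Restricting to full support and beginning with small infinite groups (the affine Weyl groups $\widetilde A_n, \widetilde B_n, \widetilde C_n, \widetilde D_n$ and the exceptional affine types, then hyperbolic Coxeter groups of rank three and four, and small universal Coxeter groups), we enumerate elements up to moderate length, test rational smoothness by checking that $\mathcal{P}(w) = \sum_{v \le w} q^{\ell(v)}$ is palindromic (equivalently that $[e,w]$ is rank-symmetric, or that $P_{e,w} = 1$), and for each candidate compute the $|S|$ parabolic factorizations $w = w^{S\setminus s}\,w_{S\setminus s}$ and test the BP criterion for each. One should expect to have to pass over the short candidates: in $\widetilde A_2$, in the infinite dihedral group, and in low-rank universal Coxeter groups, the obvious short full-support elements --- the ``Coxeter-element-like'' products $s_1 s_2 \cdots s_n$ --- do carry Grassmannian BP decompositions, because $[e,w]$ splits through a maximal parabolic; a genuine counterexample needs $[e,w]$ to be rank-symmetric while failing to split through \emph{every} maximal parabolic.

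Once a candidate $(W,w)$ is in hand, the third step is a self-contained verification: (i) confirm rational smoothness, ideally by displaying a factorization of $\mathcal{P}(w)$ into $q$-integers or by identifying $[e,w]$ with a recognizable rank-symmetric poset; and (ii) for each $s \in S$, compute $w^{S\setminus s}$ and $w_{S\setminus s}$ explicitly and check that the resulting decomposition is either trivial or violates the support/Poincar\'e criterion for being BP. Both are finite verifications. The main obstacle is the second step, locating the example: it is not a priori clear that the conjecture fails, since a great many rationally smooth elements do admit Grassmannian BP decompositions, so the real content is a (likely computer-assisted) search together with a human-checkable certificate --- and, ideally, an explanation of which structural feature of infinite Coxeter systems (for example a Coxeter diagram that is a cycle, with no ``leaf'' at which to peel) makes the Richmond--Slofstra induction break down.
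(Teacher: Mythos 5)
Your proposal correctly identifies that the statement is false and that the way to show this is to exhibit a counterexample, and your general search strategy (restrict to full support, note that rank~$2$ is trivial so rank~$3$ is the first interesting case, test rational smoothness via palindromicity of $\mathcal{P}(w)$, test each maximal parabolic against the BP criterion) matches the paper's. But the proposal stops exactly where the proof has to begin: you never produce the pair $(W,w)$. A disproof by counterexample consists of the counterexample together with its verification; a description of how one would search for one, however sensible, is not a proof. The paper's answer is the affine Weyl group of type $\widetilde{C}_2$, with $S=\{r,s,t\}$, $(rs)^4=(st)^4=(rt)^2=e$, and $w=srstrsr$. It checks directly that none of the three decompositions $w=w^{S\setminus\{x\}}w_{S\setminus\{x\}}$ is BP, and it verifies rational smoothness not by factoring $\mathcal{P}(w)$ into $q$-integers (as you suggest) but by using the non-Grassmannian BP decomposition at $J=\{r\}$ (which exists since $r\in D_R(w)$) to write $\mathcal{P}(w)=\mathcal{P}^{\{r\}}(w^{\{r\}})(1+q)$ and then enumerating $\mathcal{P}^{\{r\}}(w^{\{r\}})=1+2q+3q^2+4q^3+3q^4+2q^5+q^6$, which is palindromic. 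The paper also prunes the search in a way you do not: its \Cref{thm:rank-three-union-intersection} forces any rank-three counterexample to have $|D_R(w)|=1$, and a short descent analysis then narrows the candidates to $w=u\cdot strsr$ with $u=sr$.

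One further caution: your closing structural heuristic --- that the failure should be traced to a cyclic Coxeter diagram with ``no leaf at which to peel'' --- points in the wrong direction. The diagram of $\widetilde{C}_2$ is a path, and the conjecture is known to \emph{hold} for $\widetilde{A}_n$, whose diagram is the cycle. So the obstruction is not the absence of leaves; it is simply that a rationally smooth element can fail the support/descent condition of \Cref{def:BP} at every maximal $J$ simultaneously once $W$ is infinite.
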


\Cref{conj:infinite-type-grassmannian-bp} is known to hold when $W$ is finite \cite{richmond-slofstra-fiber-bundle, ryan, wolper}, of affine type $\widetilde{A}_n$ \cite{billey-crites}, or in a certain large family of groups of indefinite type \cite{richmond-slofstra-triangle}. In \Cref{thm:counterexample} we give the first known counterexample. 

\begin{theorem}
\label{thm:intro-counterexample}
\Cref{conj:infinite-type-grassmannian-bp} fails for the affine Weyl group $W$ of type $\widetilde{C}_2$.
\end{theorem}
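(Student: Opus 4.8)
The plan is to produce an explicit element $w \in W := \widetilde{C}_2$ that is rationally smooth but has no Grassmannian BP decomposition, together with finite computations verifying both properties. Fix $S=\{s_0,s_1,s_2\}$ with $m(s_0,s_1)=m(s_1,s_2)=4$ and $m(s_0,s_2)=2$. One preliminary reduction guides the search: every proper standard parabolic of $W$ is a finite dihedral group, and a short argument shows that if $\mathrm{supp}(w)\subsetneq S$ then $w$ lies in some maximal parabolic $W_J$, so that $w=e\cdot w$ is a (Grassmannian) BP decomposition; hence any counterexample must have full support. Among full-support elements we then search, by length, for a rationally smooth one; I expect $w$ can be chosen with a palindromic reduced word, so that $w=w^{-1}$ and, for each $J$, the left and right BP conditions coincide, which halves the casework below.

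Given the candidate $w$, the first task is to verify \emph{rational smoothness}. The Bruhat interval $[e,w]$ is finite, so we list it and compute the Poincar\'e polynomial $\mathcal{P}(w)=\sum_{v\le w}q^{\ell(v)}$; by the Bj\"orner--Ekedahl equivalence recalled in the introduction, $\mathcal{P}(w)$ being palindromic is equivalent to $w$ being rationally smooth, and this applies to the affine (Kac--Moody) setting. If one wishes to avoid invoking that equivalence here, one can instead compute the Kazhdan--Lusztig polynomials $P_{x,w}$ for all $x\le w$ directly (again finitely many) and check each equals $1$.

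The second task is to show $w$ has \emph{no Grassmannian BP decomposition}. There are exactly three maximal parabolics $W_{J^{(i)}}$ with $J^{(i)}:=S\setminus\{s_i\}$, each dihedral of order at most $8$. For each $i$, form the right parabolic factorization $w=w^{J^{(i)}}w_{J^{(i)}}$ and the left one $w={}_{J^{(i)}}w\cdot{}^{J^{(i)}}w$, and apply the criterion of \Cref{def:BP}: a parabolic factorization of $w$ is a BP decomposition precisely when its $W_{J^{(i)}}$-part equals the longest element of $W_{J^{(i)}}$ lying in $[e,w]$ (equivalently, when $\mathcal{P}(w)$ factors compatibly as a product of a parabolic Poincar\'e polynomial and $\mathcal{P}$ of that part). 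Since the $W_{J^{(i)}}$-part always lies weakly below that longest element, it suffices to compute $\max\{x\in W_{J^{(i)}}:x\le w\}$ for $i=0,1,2$ and observe in each case that it is strictly longer than both the left and the right $W_{J^{(i)}}$-parts of $w$ — short checks once $[e,w]$ is recorded, and only three of them when $w=w^{-1}$.

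The main obstacle is the search in the first step: there is no a priori bound on the length of the smallest such $w$, and any candidate must be shown to dodge \emph{all} (at most six) maximal-parabolic factorizations simultaneously. Already $s_0s_1s_2$, of length $3$, has the Grassmannian BP decomposition $s_0\cdot(s_1s_2)$ with respect to $\{s_1,s_2\}$, and such fortunate factorizations are common, so pinning down a $w$ for which none occurs — and checking this exhaustively — is the crux. A secondary subtlety is confirming that the palindromicity-equals-rational-smoothness equivalence is legitimate in affine type; the direct Kazhdan--Lusztig computation is a safe fallback should any doubt arise.
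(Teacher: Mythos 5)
Your outline matches the paper's strategy — exhibit an explicit full-support, rationally smooth $w\in\widetilde{C}_2$ and check by hand that no maximal $J$ is in $\BP(w)$ — but it stops short of the one thing the theorem actually asserts: the existence of such a $w$. You explicitly leave the search as "the crux," and without a concrete witness there is no proof. The paper closes this gap by a short descent analysis: writing $S=\{r,s,t\}$ with $(rs)^4=(st)^4=(rt)^2=e$, it uses \Cref{thm:rank-three-union-intersection} to force $|D_R(w)|=1$ for any counterexample, then tracks which letters can be right descents of $wr$, $wrs$, etc., ruling out the branch $w\leq_L(stsr)^k$ (too many lower covers to be rationally smooth) and arriving at $w=srstrsr$. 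It then verifies the three right parabolic factorizations $srst\cdot rsr$, $srstrs\cdot r$, $w\cdot e$ each violate \Cref{def:BP}, and checks rational smoothness via $\mathcal{P}^{\{r\}}(w^{\{r\}})=1+2q+3q^2+4q^3+3q^4+2q^5+q^6$ together with \Cref{prop:bp-iff-rgf} and \Cref{thm:rs-equiv-defs}. Your proposal needs to either reproduce such a narrowing argument or report the result of an actual exhaustive search up to some length; as written it is a verification scheme with no input.

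Two smaller points. First, your worry about palindromicity versus rational smoothness in affine type is unfounded: \Cref{thm:rs-equiv-defs} is stated for arbitrary Coxeter groups, so no fallback Kazhdan--Lusztig computation is needed. Second, your criterion that $J\in\BP(w)$ iff $w_J=\max(W_J\cap[e,w])$ is correct (it is one of the equivalences in Richmond--Slofstra), but it is not among the characterizations recalled in this paper, so you would need to cite it; alternatively, \Cref{def:BP} can be checked directly on each factorization, as the paper does. Checking left BP decompositions as well is harmless but unnecessary under the paper's formalization of the conjecture, which concerns only decompositions $w=w^Jw_J$.
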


We prove \Cref{thm:intro-counterexample} in \Cref{sec:infinite-type-counterexample}.

\section{Preliminaries}\label{sec:preliminaries}
\subsection{Coxeter groups and BP decompositions}
Let $W$ be a Coxeter group with simple reflection set $S$. We refer the reader to \cite{bjorner-brenti} for background and notation on Coxeter groups. In particular, we write $\ell$ for the length function, $\leq$ for the Bruhat order on $W$, and $\leq_L$ for left weak order. For $J \subset S$, we write $W_J$ for the parabolic subgroup generated by $J$, and $W^J$ for the set of minimal length representatives of the cosets $W/W_J$. The subgroup $W_J$ is a Coxeter group in its own right, with simple reflection set $J$. Each $w \in W$ has a unique parabolic decomposition $w=w^Jw_J$ with $w^J \in W^J$ and $w_J \in W_J$, and these elements satisfy $\ell(w)=\ell(w^J)+\ell(w_J)$. Parabolic decompositions for \emph{right} cosets are defined analogously, and written $w=\leftindex_J{w} \leftindex^J{w}$. The \emph{left descent set} of $w \in W$ is $D_L(w) \coloneqq \{ s \in S \mid \ell(sw)<\ell(w)\}$ and the \emph{support} $\Supp(w)$ of $w$ is $\{s \in S \mid s \leq w\}$; the right descent set $D_R$ is defined analogously. 

\begin{defin}[See \cite{billey-postnikov, richmond-slofstra-fiber-bundle}]
\label{def:BP}
A parabolic decomposition $w=w^Jw_J$ is called a \emph{Billey--Postnikov} decomposition, or \emph{BP} decomposition, if 
\[
\Supp(w^J)\cap J\subset D_L(w_J). 
\]
In this case, we also say that $w$ is \emph{BP at} $J$.
\end{defin}

Previous investigations of BP decompositions have often been satisfied to prove the \emph{existence} of a suitable BP decomposition for \emph{certain} (e.g. \emph{rationally smooth}) elements $w$. A perspective of this paper is that it is valuable to consider the set of \emph{all} BP decompositions of an arbitrary element $w \in W$. To this end, for any $w \in W$, we define
 \[\BP(w):=\{J\subset S\:|\: w=w^Jw_J \text{ is a Billey--Postnikov decomposition}\}.\]

We write $[v,w]$ for a closed Bruhat interval. If $v,w \in W^J$, we write $[v,w]^J$ for $[v,w] \cap W^J$. For $w \in W^J$ we write $\mathcal{P}^J(w)$ for the length generating function of the Bruhat interval below $w$:
\[
\mathcal{P}^J(w) \coloneqq \sum_{v \in [e,w]^J} q^{\ell(v)},
\]
and we write $\mathcal{P}(w)$ for $\mathcal{P}^{\emptyset}(w)$.

\begin{prop}[See \cite{richmond-slofstra-fiber-bundle}]
\label{prop:bp-iff-rgf}
Let $W$ be any Coxeter group, let $w \in W$, and let $J \subset S$. Then $J \in \BP(w)$ if and only if $\mathcal{P}(w)=\mathcal{P}^J(w^J)\mathcal{P}(w_J)$.
\end{prop}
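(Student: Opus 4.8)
The plan is to prove that $J\in\BP(w)$ and the factorization $\mathcal{P}(w)=\mathcal{P}^J(w^J)\mathcal{P}(w_J)$ are each equivalent to the order-theoretic condition that $[e,w]\cap W_J=[e,w_J]$, i.e.\ that $w_J$ is the unique maximal element of $W_J$ lying below $w$ in Bruhat order. I would first record two facts valid for \emph{every} $w$ and $J$. First, $v\le w$ implies $v^J\le w^J$, since the projection $W\to W^J$ is order preserving \cite{bjorner-brenti}. Second, if $u\in W^J$ with $u\le w^J$ and $x\in W_J$ with $x\le w_J$ then $\ell(ux)=\ell(u)+\ell(x)$, and concatenating a reduced word for $w^J$ with one for $w_J$ yields a reduced word for $w$ inside which a reduced subword for $u$ followed by one for $x$ is again a reduced subword, spelling $ux$; hence $ux\le w$. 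Thus the length-preserving map $\iota\colon[e,w^J]^J\times[e,w_J]\to[e,w]$, $(u,x)\mapsto ux$, is well defined, and injective since $(ux)^J=u$ and $(ux)_J=x$. It follows that $\mathcal{P}^J(w^J)\mathcal{P}(w_J)\le\mathcal{P}(w)$ coefficientwise in all cases, with equality exactly when $\iota$ is surjective; and since $v_J$ is spelled by a suffix of a reduced word for $v$, we have $v_J\le v\le w$, so $v\le w$ lies in the image of $\iota$ iff $v_J\le w_J$. Hence $\iota$ is surjective iff $[e,w]\cap W_J=[e,w_J]$, and it remains to prove $J\in\BP(w)\iff[e,w]\cap W_J=[e,w_J]$.

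For the easy implication I would argue the contrapositive of ``$[e,w]\cap W_J=[e,w_J]\implies J\in\BP(w)$'': suppose $s\in\Supp(w^J)\cap J$ with $s\notin D_L(w_J)$. Then $sw_J\in W_J$ has length $\ell(w_J)+1$, so $sw_J\not\le w_J$; but $s$ occurs in every reduced word of $w^J$, so reading $s$ from the first block of the concatenated reduced word for $w=w^Jw_J$ and then all of the second block produces a reduced subword spelling $sw_J$, whence $sw_J\le w$. Thus $sw_J\in([e,w]\cap W_J)\setminus[e,w_J]$, as required.

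For ``$J\in\BP(w)\implies[e,w]\cap W_J=[e,w_J]$'' I would induct on $\ell(w^J)$, the case $w^J=e$ being trivial. If $\ell(w^J)\ge1$, choose $s\in D_L(w^J)$; then $sw^J\in W^J$ and $sw=(sw^J)w_J$ is a parabolic decomposition with $(sw)^J=sw^J<w^J$ and $(sw)_J=w_J$, and since $\Supp(sw^J)\subseteq\Supp(w^J)$ we get $\Supp((sw)^J)\cap J\subseteq\Supp(w^J)\cap J\subseteq D_L(w_J)$, so $J\in\BP(sw)$; the inductive hypothesis then gives $[e,sw]\cap W_J=[e,w_J]$. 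Now $sw<w$, and by the lifting property any $v\le w$ with $v\not\le sw$ has $s\in D_L(v)$. If $s\notin J$, no such $v$ can lie in $W_J$ (as $D_L(v)\subseteq J$ there), so $[e,w]\cap W_J=[e,sw]\cap W_J=[e,w_J]$. If $s\in J$, then $s\in\Supp(w^J)\cap J\subseteq D_L(w_J)$; for $v\in[e,w]\cap W_J$, either $s\notin D_L(v)$ and then $v\le sw$, so $v\le w_J$ by induction, or $s\in D_L(v)$ and then $sv\le sw$ lies in $W_J$, so $sv\le w_J$ by induction, and the lifting property (applied with $s\in D_L(w_J)$ and $s\notin D_L(sv)$) upgrades this to $v=s(sv)\le w_J$. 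In every case $[e,w]\cap W_J\subseteq[e,w_J]$, and the reverse inclusion is clear.

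I expect the delicate point to be this last step: verifying that the BP condition survives passing from $w$ to $sw$ when a left descent is stripped off $w^J$, and correctly invoking the lifting property in each of the cases $s\in J$ and $s\notin J$. The remaining ingredients---that $\ell(ux)=\ell(u)+\ell(x)$ for $u\in W^J$, $x\in W_J$, that $sw^J\in W^J$ when $s\in D_L(w^J)$, the subword property, and the lifting property of Bruhat order---are all standard \cite{bjorner-brenti}.
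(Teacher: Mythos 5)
Your proof is correct. The paper does not prove this proposition but cites Richmond--Slofstra, and your argument reconstructs essentially their proof: you interpolate the two further equivalent conditions from their Theorem 2.2 (bijectivity of the length-additive multiplication map $[e,w^J]^J\times[e,w_J]\to[e,w]$, and $[e,w]\cap W_J=[e,w_J]$), with the inductive descent on $\ell(w^J)$ via the lifting property handling the only delicate implication. All the individual steps (length-additivity, the subword arguments, $sw^J\in W^J$ for $s\in D_L(w^J)$, and both applications of the lifting property) check out.
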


\subsection{BP decompositions at disconnected subsets}
We say that $J\subset S$ is \emph{connected} if the induced subgraph of the Coxeter diagram on vertex set $J$ is connected. Two subsets $J,K\subset S$ are \emph{totally disconnected} if $J\cap K=\emptyset$ and every $s\in J$ commutes with every $s'\in K$. The following lemma is elementary.

\begin{lemma}\label{lem:totally-disconnected-parabolic}
Let $J$ and $K$ be totally disconnected. Then $w_{J\cup K}=w_Jw_K$.
\end{lemma}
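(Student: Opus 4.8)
The plan is to reduce everything to the structural consequence of total disconnectedness already recorded in the statement we are proving, together with two standard facts about minimal coset representatives. Since $J\cap K=\emptyset$ and every simple reflection in $J$ commutes with every simple reflection in $K$, the Coxeter diagram on $J\cup K$ is the disjoint union of the diagrams on $J$ and on $K$, so $W_{J\cup K}$ is the internal direct product $W_J\times W_K$: every element of $W_{J\cup K}$ has a unique factorization $pq$ with $p\in W_J$ and $q\in W_K$, the factors $p,q$ commute, and $\ell(pq)=\ell(p)+\ell(q)$ (length is additive over a direct product of Coxeter systems, and length in $W_{J\cup K}$ agrees with length in $W$). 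Apply this to $z\coloneqq w_{J\cup K}$, and write $z=pq$ with $p\in W_J$, $q\in W_K$.

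First I would show that $w=\bigl(w^{J\cup K}q\bigr)\,p$ is in fact the parabolic decomposition of $w$ with respect to $J$. Since $p$ and $q$ commute, $w=w^{J\cup K}z=w^{J\cup K}pq=\bigl(w^{J\cup K}q\bigr)p$ and $p\in W_J$, so it remains to check that $x\coloneqq w^{J\cup K}q$ lies in $W^J$ and that $\ell(w)=\ell(x)+\ell(p)$. For the length identity, use that $w^{J\cup K}\in W^{J\cup K}$ implies $\ell(w^{J\cup K}u)=\ell(w^{J\cup K})+\ell(u)$ for every $u\in W_{J\cup K}$; taking $u=q$ and $u=z$ gives $\ell(x)=\ell(w^{J\cup K})+\ell(q)$ and $\ell(w)=\ell(w^{J\cup K})+\ell(p)+\ell(q)=\ell(x)+\ell(p)$. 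For membership in $W^J$, take $s\in J$; since $s$ commutes with every element of $W_K$ (this is where total disconnectedness is used) we have $xs=w^{J\cup K}(qs)$ with $qs\in W_{J\cup K}$, and in $W_{J\cup K}=W_J\times W_K$ the element $qs$ has length $\ell(q)+1$, so $\ell(xs)=\ell(w^{J\cup K})+\ell(q)+1=\ell(x)+1>\ell(x)$; hence $D_R(x)\cap J=\emptyset$, i.e. $x\in W^J$.

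By uniqueness of parabolic decompositions this yields $w_J=p$. Running the identical argument with the roles of $J$ and $K$ exchanged gives $w_K=q$, and therefore $w_Jw_K=pq=z=w_{J\cup K}$, as claimed. (Equivalently, one can phrase the same computation through the transitivity of parabolic decomposition — for $L\subseteq M$ one has $w_L=(w_M)_L$ computed inside $W_M$ — applied with $L=J$, $M=J\cup K$, together with the trivial identification of the $J$-component of $pq$ inside the direct product $W_J\times W_K$ as $p$.)

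There is no genuine obstacle here, and indeed the text flags the lemma as elementary; the only thing that needs care is the bookkeeping with lengths and right descents, in particular making sure the total-disconnectedness hypothesis is invoked at exactly the two points where it is needed: to split $w_{J\cup K}$ into commuting factors $p\in W_J$, $q\in W_K$ with additive lengths, and to commute a simple reflection $s\in J$ past an element of $W_K$.
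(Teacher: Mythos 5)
Your proof is correct. The paper states this lemma without proof (it is flagged as elementary), so there is nothing to compare against; your argument — splitting $w_{J\cup K}=pq$ inside the direct product $W_{J\cup K}=W_J\times W_K$, verifying that $w^{J\cup K}q\in W^J$ via lengths and right descents, and invoking uniqueness of parabolic decompositions — is the standard one and is complete.
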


\Cref{lem:totally-disconnected-BP} below will often allow us to reduce to the consideration of connected $J \subset S$.

\begin{lemma}\label{lem:totally-disconnected-BP}
Let $J$ and $K$ be totally disconnected. Then $J\cup K \in \BP(w)$ if and only if $J \in \BP(w)$ and $K \in \BP(w)$.
\end{lemma}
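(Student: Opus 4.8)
The plan is to prove both implications at once by reducing the Coxeter-theoretic criterion of \Cref{def:BP} at $J\cup K$ to the conjunction of the same criteria at $J$ and at $K$. The mechanism is the product structure $W_{J\cup K}\cong W_J\times W_K$, with additive length function, that total disconnectedness provides: this is what lets the support condition and the left-descent condition at $J\cup K$ each split according to $J$ and $K$.

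First I would record the relevant parabolic decompositions. Write $w=w^{J\cup K}w_{J\cup K}$; by \Cref{lem:totally-disconnected-parabolic}, $w_{J\cup K}=w_Jw_K$, and under $W_{J\cup K}\cong W_J\times W_K$ this element has $W_J$-component $w_J$ and $W_K$-component $w_K$, so the minimal-length representative of $w_{J\cup K}W_J$ is exactly $w_K$ and that of $w_{J\cup K}W_K$ is $w_J$. Combining with the standard transitivity of parabolic decompositions (\cite{bjorner-brenti}), $w^J=w^{J\cup K}\,(w_{J\cup K})^J$ with $\ell(w^J)=\ell(w^{J\cup K})+\ell((w_{J\cup K})^J)$, and symmetrically for $K$, one obtains
\[
w^J=w^{J\cup K}w_K \quad\text{and}\quad w^K=w^{J\cup K}w_J,
\]
each a length-additive product.

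Next come two bookkeeping steps. Since a reduced word for $w^J=w^{J\cup K}w_K$ is obtained by concatenating reduced words for $w^{J\cup K}$ and for $w_K\in W_K$, and since $J\cap K=\emptyset$, we get $\Supp(w^J)\cap J=\Supp(w^{J\cup K})\cap J$, and likewise $\Supp(w^K)\cap K=\Supp(w^{J\cup K})\cap K$. For descents: $w_{J\cup K}=w_Jw_K$ with $w_J$, $w_K$ commuting and lengths adding, so left-multiplication by $s\in J$ alters only the $W_J$-factor; hence $D_L(w_{J\cup K})\cap J=D_L(w_J)$ and $D_L(w_{J\cup K})\cap K=D_L(w_K)$, and $D_L(w_{J\cup K})=D_L(w_J)\sqcup D_L(w_K)$ with the parts lying in $J$ and $K$ respectively. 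Now decompose $\Supp(w^{J\cup K})\cap(J\cup K)$ as $(\Supp(w^{J\cup K})\cap J)\sqcup(\Supp(w^{J\cup K})\cap K)$; since a set $A_J\sqcup A_K$ with $A_J\subseteq J$ and $A_K\subseteq K$ is contained in $D_L(w_J)\sqcup D_L(w_K)$ if and only if $A_J\subseteq D_L(w_J)$ and $A_K\subseteq D_L(w_K)$, the defining inclusion $\Supp(w^{J\cup K})\cap(J\cup K)\subseteq D_L(w_{J\cup K})$ for $J\cup K\in\BP(w)$ is equivalent to the pair of inclusions $\Supp(w^J)\cap J\subseteq D_L(w_J)$ and $\Supp(w^K)\cap K\subseteq D_L(w_K)$, that is, to $J\in\BP(w)$ and $K\in\BP(w)$.

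The only point requiring care is the first step: correctly identifying $w^J$ and $w_J$ in terms of the pieces of the decomposition at $J\cup K$ via transitivity of parabolic decompositions, and verifying that total disconnectedness does give $W_{J\cup K}\cong W_J\times W_K$ with additive length (the fact underlying both the support and the descent splittings). Once these are in hand the remainder is routine manipulation of finite sets.
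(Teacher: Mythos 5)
Your proposal is correct and follows essentially the same route as the paper's proof: both use $w_{J\cup K}=w_Jw_K$ to split $D_L(w_{J\cup K})=D_L(w_J)\sqcup D_L(w_K)$, the length-additive identity $w^J=w^{J\cup K}w_K$ (and its analogue for $K$) to split the support condition, and then the elementary set-theoretic equivalence. You merely spell out the transitivity-of-parabolic-decompositions step that the paper states as clear.
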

\begin{proof}
By \Cref{lem:totally-disconnected-parabolic}, $w_{J\cup K}=w_Jw_K$ and furthermore it is clear that $D_L(w_{J\cup K})=D_L(w_J)\sqcup D_L(w_K)$. At the same time, $w^{J\cup K}w_K=w^J$ is a length-additive expression, so $\Supp(w^{J\cup K})\cap J=\Supp(w^J)\cap J$. Similarly, $\Supp(w^{J\cup K})\cap K=\Supp(w^K)\cap K$. Thus, \[\Supp(w^{J\cup K})\cap(J\cup K)=\big(\Supp(w^J)\cap J\big)\sqcup\big(\Supp(w^K)\cap K\big)\]
and $\Supp(w^{J\cup K})\cap(J\cup K)\subset D_L(w_{J\cup K})=D_L(w_J)\cup D_L(w_K)$ if and only if both $\Supp(w^J)\cap J\subset D_L(w_J)$ and $\Supp(w^K)\cap K\subset D_L(w_K)$. 
\end{proof}

\subsection{Schubert varieties and rational smoothness}
For $v,w \in W$, and $J \subset S$ we write $P^J_{v,w}(q)$ for the corresponding \emph{parabolic Kazhdan--Lusztig polynomial} \cite{deodhar-parabolic, kazhdan-lusztig}, a polynomial in $\mathbb{N}[q]$ \cite{elias-williamson}.

\begin{theorem}[See \cite{carrell-peterson,deodhar-poincare-duality,kazhdan-lusztig}]
\label{thm:rs-equiv-defs}
Let $W$ be any Coxeter group, $J \subset S$, and $w \in W^J$. Then the following are equivalent:
\begin{itemize}
    \item[(RS1)] $P^J_{e,w}(q)=1$ and
    \item[(RS2)] the polynomial $\mathcal{P}^J(w)$ is palindromic: $q^{\ell(w)}\mathcal{P}^J(w)(q^{-1})=\mathcal{P}^J(w)(q)$.
\end{itemize}
\end{theorem}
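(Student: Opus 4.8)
The plan is to establish the two implications (RS1)$\Rightarrow$(RS2) and (RS2)$\Rightarrow$(RS1) separately, each resting on one universal identity and one positivity fact. The universal identity is that for \emph{every} $w\in W^J$ the polynomial
\[
\mathrm{IP}^J_w(q)\ :=\ \sum_{v\in[e,w]^J} q^{\ell(v)}\,P^J_{v,w}(q)
\]
is palindromic of degree $\ell(w)$, that is, $q^{\ell(w)}\mathrm{IP}^J_w(q^{-1})=\mathrm{IP}^J_w(q)$. When $W$ is crystallographic this is Poincar\'e duality for the intersection cohomology of the projective variety $X^J(w)\subset G/P_J$; for an arbitrary Coxeter group it follows, by the same formal manipulation, from bar-invariance of the parabolic Kazhdan--Lusztig basis element of $w$ together with the $R$-polynomial inversion formulas. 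The positivity fact, cited in the excerpt, is that $P^J_{v,w}\in\mathbb{N}[q]$ with constant term $1$ and $\deg P^J_{v,w}\le(\ell(w)-\ell(v)-1)/2$.

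For (RS1)$\Rightarrow$(RS2) the first step is a \emph{propagation} statement: $P^J_{e,w}=1$ forces $P^J_{v,w}=1$ for all $v\in[e,w]^J$. Geometrically this says that rational smoothness at the unique $B$-fixed point $eP_J$ of $X^J(w)$ spreads to the whole variety, which follows from the existence of a one-parameter subgroup of $T$ contracting $X^J(w)$ onto $eP_J$; combinatorially --- hence also in non-crystallographic type --- one argues by downward induction on $\ell(v)$, feeding the parabolic Kazhdan--Lusztig recursion along a right descent of $w$ together with positivity. Granting propagation, substituting $P^J_{v,w}\equiv 1$ into the universal identity yields $\mathcal{P}^J(w)=\mathrm{IP}^J_w$, which is palindromic; this is precisely (RS2).

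For (RS2)$\Rightarrow$(RS1), consider the \emph{defect}
\[
D(q)\ :=\ \mathrm{IP}^J_w(q)-\mathcal{P}^J(w)(q)\ =\ \sum_{v\in[e,w]^J} q^{\ell(v)}\bigl(P^J_{v,w}(q)-1\bigr).
\]
Positivity gives $D\in\mathbb{N}[q]$ with $D(0)=0$, and the universal identity together with (RS2) gives that $D$ is palindromic of degree at most $\ell(w)$. It suffices to show $D=0$, since then every $P^J_{v,w}=1$, and in particular $P^J_{e,w}=1$, which is (RS1). Here positivity and palindromicity alone do not close the argument: using the degree bound one checks that $[q^0]D=[q^{\ell(w)-1}]D=[q^{\ell(w)}]D=0$ unconditionally, but a priori $D$ could still be supported in middle degrees (and indeed it is nonzero precisely when $X^J(w)$ is singular). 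The genuine content is the combinatorial argument of Carrell--Peterson, and of Deodhar in the parabolic setting: rank-symmetry of the poset $[e,w]^J$ is first promoted, via known lower bounds on the numbers of Bruhat covers/edges at each vertex and a global count, to the statement that the Bruhat graph of $[e,w]^J$ is regular of degree $\ell(w)$; this regularity is then shown to force $P^J_{e,w}=1$ through the tangent-space criterion for rational smoothness and a careful analysis of the Kazhdan--Lusztig recursion, which rules out any surviving higher-degree term in any $P^J_{v,w}$. I expect this last chain --- converting a single global numerical symmetry into the vanishing of all higher-degree coefficients --- to be the main obstacle; everything else above is essentially formal.

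Finally, one could instead reduce to the case $J=\emptyset$ using Deodhar's comparison between parabolic and ordinary Kazhdan--Lusztig polynomials, but running the argument directly in the parabolic Hecke module avoids the bookkeeping. All ingredients --- the parabolic Hecke module, bar-invariance, the $R$-polynomial identities, and positivity --- are available for an arbitrary Coxeter group, so the equivalence holds with no crystallographic hypothesis, as stated.
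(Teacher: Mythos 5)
The paper does not prove this statement---it is quoted as background from Carrell--Peterson, Deodhar, and Kazhdan--Lusztig---and your outline is a faithful sketch of how those cited proofs go: the formal palindromicity of $\sum_v q^{\ell(v)}P^J_{v,w}(q)$ via the $R$-polynomial identities, propagation of $P^J_{e,w}=1$ by monotonicity/positivity for one direction, and the Carrell--Peterson edge-counting promotion of rank-symmetry to Bruhat-graph regularity for the other. You correctly identify that the last step (regularity forces $P^J_{e,w}=1$) is the genuine content and defer it to the same references the paper cites, so your proposal matches the paper's treatment.
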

\noindent We say an element $w$ satisfying these equivalent conditions is \emph{$J$-rationally smooth} (or just \emph{rationally smooth} if $J=\emptyset$).

The reason for this terminology is the following. Suppose $W$ is crystallographic (but not necessarily finite) and let $G$ be the corresponding Kac--Moody group. Let $T \subset B$ be a maximal torus inside a Borel subgroup of $G$. For $w \in W^J$, the \emph{Schubert variety} is $X^J(w) \coloneqq \overline{BwP_J}/P_J$, a finite-dimensional subvariety of the generalized flag variety $G/B$ (which may itself be an infinite-dimensional ind-variety). In this case, there is a third equivalent condition for rational smoothness:
\begin{itemize}
    \item[(RS3)] the local intersection cohomology $IH^i(X^J(w))_v$ at the $T$-fixed point $vP_J$ vanishes for all $W^J\ni v \leq w$ and $i>0$. 
\end{itemize}
Finally, if $W$ is of finite simply-laced type, then by a result of Carrell--Peterson \cite{carrell-peterson} we also have:
\begin{itemize}
    \item[(RS4)] $X^J(w)$ is a smooth variety.
\end{itemize}

\subsection{Root system conventions}
For much of the paper we work in the case $W$ is a finite Weyl group. Let $\Phi$ be a crystallographic root system of rank $r$, with a choice of positive roots $\Phi^+$ and the corresponding simple roots $\Delta=\{\alpha_1,\ldots,\alpha_r\}$. The \emph{root poset} on $\Phi^+$ is defined so that $\beta\leq\gamma$ if $\gamma-\beta$ can be written as a nonnegative linear (necessarily integral) combination of $\Delta$. For $\beta\geq\alpha\in\Delta$, we say that $\beta$ is \emph{supported on} $\alpha$. Let $W=W(\Phi)$ be the Weyl group of $\Phi$. For $w\in W$, its \emph{(right) inversion set} is $I(w)=\{\beta\in\Phi^+\:|\: w\beta\in\Phi\setminus \Phi^+\}$. The inversion set $I(w)$ is well known to be a \emph{biclosed set}:
\begin{itemize}
\item if $\beta\in I(w)$, $\gamma\in I(w)$, and $\beta+\gamma\in\Phi^+$, then $\beta+\gamma\in I(w)$; and
\item if $\beta \in \Phi^+ \setminus I(w)$, $\gamma \in \Phi^+ \setminus I(w)$, and $\beta+\gamma\in\Phi^+$, then $\beta+\gamma\notin I(w)$.
\end{itemize}

For $J\subset S$ we write $\Delta_J \subset \Delta$ for the corresponding simple roots and let $\Phi_J^+\subset\Phi^+$ be the set of positive roots that can be written as a linear combination of simple roots in $\Delta_J$. For any $w\in W$, we have $I(w^J)\cap \Phi_J^+=\emptyset$. 

We adopt the following conventions for root systems in classical types:
\begin{itemize}
\item Type $A_{n-1}$: $\Phi=\{e_i-e_j\:|\:1\leq i\neq j\leq n\}$, $\Phi^+=\{e_i-e_j\:|\:1\leq i<j\leq n\}$, $\Delta=\{e_i-e_{i+1}\:|\:1\leq i\leq n-1\}$.
\item Type $B_n$: $\Phi=\{\pm e_i\pm e_j,\ \pm e_i\:|\:1\leq i\neq j\leq n\}$, $\Phi^+=\{e_i\pm e_j,\ e_i\:|\:1\leq i<j\leq n\}$, $\Delta=\{e_i-e_{i+1}\:|\:1\leq i\leq n-1\}\cup\{e_n\}$.
\item Type $C_n$: $\Phi=\{\pm e_i\pm e_j,\ \pm 2e_i\:|\:1\leq i\neq j\leq n\}$, $\Phi^+=\{e_i\pm e_j,\ 2e_i\:|\:1\leq i<j\leq n\}$, $\Delta=\{e_i-e_{i+1}\:|\:1\leq i\leq n-1\}\cup\{2e_n\}$.
\item Type $D_n$: $\Phi=\{\pm e_i\pm e_j\:|\:1\leq i\neq j\leq n\}$, $\Phi^+=\{e_i\pm e_j\:|\:1\leq i<j\leq n\}$, $\Delta=\{e_i-e_{i+1}\:|\:1\leq i\leq n-1\}\cup\{e_{n-1}+e_n\}$,
\end{itemize}
where $e_i$ denotes the $i$-th standard basis vector in $\mathbb{R}^n$.

\section{The Billey--Postnikov poset}
In this section, we describe explicit ways to characterize Billey--Postnikov decompositions.  In \Cref{sub:characterization,sub:distributive,sub:bp-properties} we work in a finite Weyl group.

\subsection{Characterizations of Billey--Postnikov decompositions}\label{sub:characterization}
We now introduce $J$-star patterns, one of our main technical tools.

\begin{defin}\label{def:Jstar}
We say that $(\beta,c_1\gamma_1,\ldots,c_k\gamma_k)$ forms a \emph{$J$-star} if $\beta\in\Phi_J^+$, $\gamma_1,\ldots,\gamma_k\in\Phi^+\setminus\Phi_J^+$, and $c_1,\ldots,c_k$ are positive integers such that $\beta+\sum_{i\in I}c_i\gamma_i\in\Phi^+$ for all $I\subset[k]$. We say that $w\in W$ \emph{contains} the $J$-star $(\beta,c_1\gamma_1,\ldots,c_k\gamma_k)$ if $\beta\notin I(w)$ and $\beta+\sum_{i=1}^k c_i\gamma_i\in I(w)$.
\end{defin}

\begin{remark}
In finite crystallographic root systems the multiset $\{c_1,\ldots,c_k\}$ can only be $\{1\}$, $\{2\}$, $\{3\}$, $\{1,1\}$, $\{1,2\}$ or $\{1,1,1\}$.  
\end{remark}

We establish some technical lemmas before proving \Cref{thm:intro-bp-pattern-characterization}.

\begin{lemma}\label{lem:simple-Jstar}
Fix $\Delta_J\subset\Delta$. Let $\alpha\in \Delta_J$ be a simple root and let $\tau\in\Phi^+\setminus\Phi_J^+$ be supported on $\alpha$. Then either
\begin{enumerate}
\item there exists $\alpha_j\in \Delta_J$ such that $\tau-\alpha_j\in\Phi^+$ is also supported on $\alpha$; or
\item there exists a $J$-star $(\alpha,c_1\gamma_1,\ldots,c_k\gamma_k)$ such that $\alpha+\sum_{i=1}^k c_i\gamma_i=\tau$.
\end{enumerate}
\end{lemma}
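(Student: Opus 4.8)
The plan is to induct on the height of $\tau$ in the root poset (or equivalently on the number of simple roots, counted with multiplicity, appearing in $\tau$). We want to build the $J$-star greedily by peeling off simple roots from $\tau$ while staying supported on $\alpha$. First I would set up the induction: if $\tau = \alpha$ itself there is nothing to do (the empty $J$-star works, or case (2) with $k=0$ is vacuously fine — though note $\tau\notin\Phi_J^+$ forces $\tau\ne\alpha$, so this base case doesn't actually arise and the induction really starts one step up). For the inductive step, since $\tau\in\Phi^+$ has height at least $2$, there is a simple root $\alpha_j\in\Delta$ with $\tau - \alpha_j\in\Phi^+$; this is the standard fact that every non-simple positive root can be lowered by subtracting some simple root. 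The dichotomy in the lemma comes from asking whether such an $\alpha_j$ can be chosen inside $\Delta_J$.

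If we can choose $\alpha_j\in\Delta_J$ with $\tau-\alpha_j\in\Phi^+$ still supported on $\alpha$, we are in case (1) and done. So suppose not: for every $\alpha_j\in\Delta_J$ with $\tau-\alpha_j\in\Phi^+$, the root $\tau-\alpha_j$ is \emph{not} supported on $\alpha$. The key claim I would then prove is that in this situation there is a simple root $\gamma=\alpha_i\in\Delta\setminus\Delta_J$ such that $\tau-\gamma\in\Phi^+$ and $\tau-\gamma$ is supported on $\alpha$, AND $(\alpha,\ \tau-\gamma)$ together with the recursively-produced $J$-star of $\tau-\gamma$ can be assembled into a $J$-star of $\tau$. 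More precisely, by induction applied to $\tau' \coloneqq \tau-\gamma$ (which has smaller height, lies in $\Phi^+\setminus\Phi_J^+$ since it's still supported on $\alpha$, and is supported on $\alpha$), either case (1) holds for $\tau'$ — but then we'd want to combine — or we get a $J$-star $(\alpha,c_1\gamma_1,\ldots,c_k\gamma_k)$ with $\alpha+\sum c_i\gamma_i = \tau'$, and I would argue that appending $\gamma$ (with coefficient $1$) yields a $J$-star for $\tau$. The bookkeeping here is: one must check the defining condition $\beta+\sum_{i\in I}c_i\gamma_i\in\Phi^+$ for all subsets $I$; this should follow from convexity/biclosedness of $\Phi^+$ together with the fact that partial sums interpolate between $\alpha$ and $\tau$, all of which are roots and all supported on $\alpha$. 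The subtle point to handle is that case (1) might recur all the way down without ever hitting case (2); I would need to show that if we are forced into "not case (1) for $\tau$" then the recursion produces genuine non-$\Phi_J^+$ summands $\gamma_i$, and that this is consistent — essentially, the $\gamma_i$ that get peeled off are exactly the places where no element of $\Delta_J$ could be subtracted.

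The main obstacle, I expect, is the combinatorial heart: showing that when no simple root of $\Delta_J$ can be subtracted from $\tau$ (keeping positivity and support on $\alpha$), some simple root of $\Delta\setminus\Delta_J$ \emph{can} be subtracted keeping support on $\alpha$, and that iterating this correctly interleaves to give the subset condition in \Cref{def:Jstar}. This likely requires a careful case analysis on the shape of $\tau$ relative to the "boundary" between $\Delta_J$ and its complement in the Coxeter diagram — in particular controlling what happens near the simple roots adjacent to $\Delta_J$ — and may use the classification of which multisets $\{c_1,\ldots,c_k\}$ can occur (from the Remark) to bound how the coefficients grow. I would organize this by tracking, at each step, the support of the current root and the coefficient of $\alpha$, and verifying that subtracting the chosen simple root never drops the $\alpha$-coefficient below $1$; the positivity-of-all-partial-sums condition should then be automatic because every partial sum lies on a saturated chain in the root poset between $\alpha$ and $\tau$. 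A clean way to finish is to note that the set $\{c_i\gamma_i\}$ we extract is exactly $I(\cdot)$-data: the $\gamma_i$ record the "exits" from $\Phi_J^+$, and biclosedness of $\Phi^+$ guarantees the interpolation property.
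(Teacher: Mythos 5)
Your plan has a genuine gap at what you yourself identify as the ``bookkeeping'' step, and it is fatal to the approach. \Cref{def:Jstar} requires $\alpha+\sum_{i\in I}c_i\gamma_i\in\Phi^+$ for \emph{every} subset $I\subset[k]$, not just for the nested partial sums along the order in which you peel off simple roots. Peeling off one simple root at a time produces $\gamma_i$'s that are individual simple roots in $\Delta\setminus\Delta_J$, and for such a tuple the singleton subsets already fail in general. Concretely, take type $A_3$ with $\Delta_J=\{\alpha_1\}$, $\alpha=\alpha_1$, $\tau=\alpha_1+\alpha_2+\alpha_3$. Case (1) fails, and your recursion peels off $\alpha_3$ and then $\alpha_2$, producing the tuple $(\alpha_1,\alpha_2,\alpha_3)$; but $\alpha_1+\alpha_3\notin\Phi^+$, so this is not a $J$-star. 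Convexity/biclosedness of $\Phi^+$ only gives you the saturated chain $\alpha_1,\ \alpha_1+\alpha_2,\ \alpha_1+\alpha_2+\alpha_3$, which is exactly the nested family and nothing more. The correct $J$-star here is $(\alpha_1,\alpha_2+\alpha_3)$: the summands must be grouped so that each $\gamma_i$ is (supported on a set) adjacent to $\alpha$ in the Dynkin diagram, which is what makes $\alpha+\sum_{i\in I}c_i\gamma_i$ correspond to a connected subdiagram for every $I$. This is precisely how the paper's proof proceeds in the multiplicity-free case: it takes the $\gamma_j$ to be the sums over the connected components of $K\setminus\{\alpha\}$, where $K=\Supp(\tau)$, rather than individual simple roots. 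The full subset condition is genuinely needed downstream (the proof of \Cref{thm:intro-bp-pattern-characterization} applies $w_J^{-1}$ to every partial sum), so you cannot weaken it.

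Two further points. First, your recursion does not close up when the inductive call lands in case (1): if $\tau'-\alpha_j\in\Phi^+$ for some $\alpha_j\in\Delta_J$, there is no reason $\tau-\alpha_j$ is a root, so ``combining'' is not automatic. Second, the case where $\tau$ has a simple-root coefficient $\geq 2$ is where most of the real work lies; the interpolating-chain picture breaks down there (the multiset $\{c_i\}$ can be $\{2\},\{1,2\},\{3\}$, etc.), and the paper handles it by explicit constructions in types $B$, $C$, $D$ and by computer verification in $F_4$ and $E_8$. Your proposal defers this entirely to ``careful case analysis,'' but without the component-grouping idea even the multiplicity-free case does not go through.
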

\begin{proof}
Write $\tau=\sum d_i\alpha_i$ in the basis $\Delta$. Let us first consider the situation where $d_i\in\{0,1\}$ for all $\alpha_i\in\Delta$. This covers many cases, and, in particular, all of type $A$.

Note that $\beta=\sum_{i\in I}\alpha_i$ is a positive root if and only if $I$ is a connected subgraph of the Dynkin diagram of $\Delta$ (see, e.g. \cite[Ch.VI \S 1]{bourbaki}). Now suppose that $\tau=\sum_{i\in K}\alpha_i>\alpha$. If $\alpha$ is a leaf of $K$, then $\tau-\alpha\in\Phi^+$. Moreover, $\tau-\alpha\notin\Phi_J^+$, since otherwise, $\alpha\in\Phi_J^+$ and thus $\tau\in\Phi_J^+$, a contradiction. Thus, we have found a $J$-star $(\alpha,\tau-\alpha)$. Assume $\alpha$ is not a leaf of $K$. Let the connected components of $K\setminus\{\alpha\}$ be $K_1,\ldots,K_m$ and let $\gamma_j=\sum_{i\in K_j}\alpha_i$. If some $\gamma_j\in\Phi_J^+$, then $J$ must contain a leaf $\alpha_j$ of $K$, meaning that $\tau-\alpha_j$ is a positive root supported on $\alpha$ so that (1) holds; and if $\gamma_j\notin\Phi_J^+$ for all $j\in[m]$, we obtain a $J$-star $(\alpha,\gamma_1,\ldots,\gamma_m)$ which sums to $\tau$ so that (2) holds.

Now suppose that some coefficient $d_i$ is at least $2$. We consider each classical type separately, but the cases are quite limited.

\

\textbf{Type $B_n$}. Say $\tau=e_a+e_b$ with $a<b$. If $e_{b}-e_{b+1}\in \Delta_J$, then (1) holds with $\alpha_j=e_b-e_{b+1}$ as $\tau$ and $\tau-\alpha_j$ have the same support. Then $e_b-e_{b+1}\notin \Delta_J$ and thus $\alpha\neq e_b-e_{b+1}$. If $\alpha=e_a-e_{a+1}$, then $(\alpha,e_{a+1}+e_b)$ is a $J$-star if $a+1<b$ (as $e_{a+1}+e_b$ is supported on $e_{b}-e_{b+1}$), and $(\alpha,2\cdot e_{a+1})$ is a $J$-star if $a+1=b$ (as $e_{a+1}$ is supported on $e_{b}-e_{b+1}$). Write $\alpha=e_i-e_{i+1}$. There are two cases: $a<i<b$ and $b<i\leq n$ (which includes the case $\alpha=e_n$). 

\textit{Case 1}: $a<i<b$. If $e_a-e_{a+1}\in \Delta_J$, then (1) holds with $\alpha_j=e_a-e_{a+1}$ and if $e_a-e_{a+1}\notin J$, then (2) holds with the $J$-star $(e_i-e_{i+1},e_a-e_i,e_{i+1}+e_b)$ when $i+1<b$, and the $J$-star $(e_i-e_{i+1},e_a-e_i,2\cdot e_b)$ when $i+1=b$.

\textit{Case 2}: $b<i$. Then (2) holds with the $J$-star $(e_i-e_{i+1},e_b-e_i,e_a+e_{i+1})$ as both $e_b-e_i$ and $e_a+e_{i+1}$ are supported on $e_b-e_{b+1}$ and thus do not belong in $\Phi_J^+$.

\

\textbf{Type $C_n$}. The analysis is largely similar so we omit some steps. Say $\tau=e_a+e_b$ with $a\leq b$. If $e_b-e_{b+1}\in \Delta_J$, then (1) holds with $\alpha_j=e_b-e_{b+1}$ as $\tau$ and $\tau-\alpha_j$ have the same support. Assume $e_b-e_{b+1}\notin \Delta_J$. If $\alpha=2e_n$, then $(2e_n,e_a-e_n,e_b-e_n)$ is a $J$-star. Say $\alpha=e_i-e_{i+1}$ with $i\geq a$ and $i\neq b$. If $i=a$, then $(e_a-e_{a+1},e_{a+1}+e_b)$ is a $J$-star; if $a<i<b$, then $(e_i-e_{i+1},e_a-e_i,e_b+e_{i+1})$ is a $J$-star when $e_a-e_{a+1}\notin \Delta_J$ and $\tau-(e_a-e_{a+1})\in\Phi^+$ is supported on $\alpha$ when $e_a-e_{a+1}\in \Delta_J$; if $i>b$, then $(e_i-e_{i+1},e_a-e_i,e_b+e_{i+1})$ is a $J$-star.

\

\textbf{Type $D_n$}. Say $\tau=e_a+e_b$ with $a< b\leq n-2$. By symmetry of the Dynkin diagram, say $\alpha=e_i-e_{i+1}$ with $i\geq a$. If $e_b-e_{b+1}\in \Delta_J$ then (1) holds with $\alpha_j=e_b-e_{b+1}$. Assume $e_b-e_{b+1}\notin \Delta_J$. If $i=a$, then $(e_i-e_{i+1},e_{i+1}-e_n,e_{i+1}+e_n)$ is a $J$-star when $b=i+1$ and $(e_i-e_{i+1},e_{i+1}+e_b)$ is a $J$-star when $b>i+1$; if $a<i<b$, then $(e_i-e_{i+1},e_a-e_i,e_{i+1}+e_b)$ is a $J$-star when $e_a-e_{a+1}\notin \Delta_J$, and (1) holds with $\alpha_j=e_a-e_{a+1}$ when $e_a-e_{a+1}\in \Delta_J$; if $i>b$, then $(e_i-e_{i+1},e_b-e_i,e_a+e_{i+1})$ is a $J$-star.

\

Type $G_2$ is trivial to check. The other exceptional types $F_4$ and $E_8$ (which covers $E_6$ and $E_7$) are done via a computer check; these checks are very quick, even for $E_8$, since we only need to iterate over the root system, not the Weyl group.
\end{proof}

\begin{proof}[Proof of \Cref{thm:intro-bp-pattern-characterization}]
First, assume that $w$ is not BP at $J$. We will then show that $w$ contains a certain $J$-star. By definition, there exists $s_\alpha\in\Supp(w^J)\cap J$ such that $s_\alpha\notin D_L(w_J)$. Since $s_\alpha\in \Supp(w^J)$, there exists $\tau\in I(w^J)$ supported on $\alpha$ (by e.g. \cite[Lemma 3.2]{gao2020boolean}). Find the minimal such $\tau$ in the root poset. Recall that $w^J$ does not have any inversions in $\Phi_J^+$. If there exists $\alpha_j\in \Delta_J$ such that $\tau-\alpha_j\in\Phi^+$ is supported on $\alpha$, then as $\alpha_j\notin I(w^J)$, we must have $\tau-\alpha_j\in I(w^J)$, contradicting the minimality of $\tau$. Thus by \Cref{lem:simple-Jstar}, there exists a $J$-star $(\alpha,c_1\gamma_1,\ldots,c_k\gamma_k)$ such that $\alpha+\sum_{i=1}^kc_i\gamma_i=\tau$. We claim that $w$ contains the $J$-star $(w_J^{-1}\alpha,c_1w_J^{-1}\gamma_1,\ldots,c_kw_J^{-1}\gamma_k)$. 

To see that this is indeed a $J$-star, note that $s_\alpha\notin D_L(w_J)$ implies that $w_J^{-1}\alpha\in\Phi^+$ and further in $\Phi_J^+$ as $w_J^{-1}\in W_J$ and $\alpha\in \Delta_J$. Each $\gamma_i$ is supported on some $\alpha_i\notin \Delta_J$ so $u\gamma_i\in\Phi^+\setminus\Phi_J^+$ for any $u\in W_J$. Moreover, $w_J^{-1}\alpha+\sum_{i\in I}c_iw_J^{-1}\gamma_i=w_J^{-1}(\alpha+\sum_{i\in I}c_i\gamma_i)\in w_J^{-1}(\Phi^+\setminus \Phi_J^+)\subset\Phi^+$. Finally, $w(w_J^{-1}\alpha)=w^J\alpha\in\Phi^+$ and $w(w_J^{-1}\alpha+\sum_{i=1}^kc_iw_J^{-1}\gamma_i)=w^J\tau\in\Phi^-$, which means that $w$ contains this $J$-star.

For the other direction, assume that $w$ contains a $J$-star at $(\beta,c_1\gamma_1,\ldots,c_k\gamma_k)$. By definition, $\beta\in\Phi_J^+$ and $w\beta\in\Phi^+$. We must have $w_J\beta\in\Phi_J^+$ since otherwise, $w^J(-w_J\beta)\in\Phi^-$, contradicting the fact that $w^J$ has no inversions in $\Phi_J^+$. As each $\gamma_i\in\Phi^+\setminus\Phi_J^+$, we have $w_J\gamma_i\in\Phi^+$ as well. Write $\tau=w_J(\beta+\sum_{i=1}^k c_i\gamma_i)\in\Phi^+\setminus\Phi_J^+$ and write $w_J\beta=\sum d_j\alpha_j$ as a linear combination of simple roots in $\Delta_J$. Since $w_J^{-1}(w_J\beta)>0$, there exists some $\alpha_j\leq\beta$ such that $w_J^{-1}\alpha_j\in\Phi^+$, i.e. $s_{\alpha_j}\notin D_L(w_J)$. Moreover, $\alpha_j\leq w_J\beta<\tau\in I(w^J)$ so $s_{\alpha_j}\in \Supp(w^J)$. This means that $w$ is not BP at $J$.
\end{proof}
\subsection{Distributivity of BP decompositions}
\label{sub:distributive}
\begin{prop}\label{prop:BP-intersection}
If $w$ is BP at $J_1$ and at $J_2$, then $w$ is BP at $J_1\cap J_2$.
\end{prop}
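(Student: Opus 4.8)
The plan is to deduce this from the $J$-star characterization \Cref{thm:intro-bp-pattern-characterization}: it is enough to show that if $w$ avoids every $J_1$-star and every $J_2$-star, then $w$ avoids every $(J_1\cap J_2)$-star. So I would argue by contradiction, taking a $(J_1\cap J_2)$-star $(\beta,c_1\gamma_1,\dots,c_k\gamma_k)$ contained in $w$ — so $\beta\in\Phi_{J_1\cap J_2}^+\setminus I(w)$, each $\gamma_i\in\Phi^+\setminus\Phi_{J_1\cap J_2}^+$, every partial sum $\beta+\sum_{i\in I}c_i\gamma_i$ is a positive root, and $\beta+\sum_{i=1}^k c_i\gamma_i\in I(w)$ — and manufacturing from it a $J_1$-star or a $J_2$-star contained in $w$.

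The obstacle to doing this naively is that the "fiber roots" $\gamma_i$, which only need to avoid $\Phi_{J_1\cap J_2}^+$, may individually lie inside $\Phi_{J_1}^+$ or inside $\Phi_{J_2}^+$, so the tuple is not automatically a star for either $J_1$ or $J_2$. The key combinatorial fact I would use to organize this is $\Phi_{J_1}^+\cap\Phi_{J_2}^+=\Phi_{J_1\cap J_2}^+$ (a positive root lies in $\Phi_K^+$ exactly when its simple support lies in $\Delta_K$), so each $\gamma_i$ lies outside at least one of $\Phi_{J_1}^+,\Phi_{J_2}^+$. Splitting $[k]=B\sqcup([k]\setminus B)$ with $B:=\{i:\gamma_i\in\Phi_{J_1}^+\}$ (so $\gamma_i\notin\Phi_{J_2}^+$ for $i\in B$ and $\gamma_i\notin\Phi_{J_1}^+$ for $i\notin B$), one can absorb the $\Phi_{J_1}^+$-part of the star into its base: set $\delta:=\beta+\sum_{i\in B}c_i\gamma_i$, which is a positive root by the star axiom and, since $\beta$ and all $\gamma_i$ with $i\in B$ have simple support in $\Delta_{J_1}$, satisfies $\delta\in\Phi_{J_1}^+$.

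Then I would split on whether $\delta\in I(w)$. If $\delta\notin I(w)$, the tuple $(\delta,(c_i\gamma_i)_{i\notin B})$ is a $J_1$-star — its partial sums $\delta+\sum_{i\in I}c_i\gamma_i=\beta+\sum_{i\in B\cup I}c_i\gamma_i$ are among the partial sums of the original star, and the support/positivity conditions are immediate — and $w$ contains it since $\delta\notin I(w)$ while $\delta+\sum_{i\notin B}c_i\gamma_i=\beta+\sum_{i=1}^k c_i\gamma_i\in I(w)$. If $\delta\in I(w)$, then (using $\beta\in\Phi_{J_1\cap J_2}^+\subseteq\Phi_{J_2}^+$ and $\gamma_i\notin\Phi_{J_2}^+$ for $i\in B$) the tuple $(\beta,(c_i\gamma_i)_{i\in B})$ is a $J_2$-star, and $w$ contains it because $\beta\notin I(w)$ while $\beta+\sum_{i\in B}c_i\gamma_i=\delta\in I(w)$. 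Either case contradicts \Cref{thm:intro-bp-pattern-characterization} applied to $J_1$ or to $J_2$, completing the proof.

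The only genuinely non-formal point is the idea of absorbing the $J_1$-internal fiber roots into the base root $\beta$ to form $\delta$; once that is in place, the case split on $\delta\in I(w)$ is forced, and all remaining verifications (roots being positive, supports lying in the correct $\Delta_K$, membership in $I(w)$) are routine bookkeeping.
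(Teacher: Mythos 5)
Your proof is correct and follows essentially the same strategy as the paper: both absorb the fiber roots lying in $\Phi_{J_1}^+$ into the base root and then read off a $J_1$-star or a $J_2$-star contained in $w$. The only difference is cosmetic — the paper takes $k$ minimal so that every proper partial sum lies outside $I(w)$, whereas you replace that minimality argument with a direct case split on whether $\delta\in I(w)$.
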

\begin{proof}
We use proof of contradiction. Write $J=J_1\cap J_2$. By \Cref{thm:intro-bp-pattern-characterization}, assume that $w$ contains a $J$-star at $(\beta,c_1\gamma_1,\ldots,c_k\gamma_k)$ with $k$ minimal. As a result, $\beta+\sum_{i\in I}c_i\gamma_i\notin I(w)$ for all $I\subsetneq[k]$ since otherwise, $w$ would contain a smaller $J$-star at $(\beta,c_{i_1}\gamma_{i_1},\ldots,c_{i_l}\gamma_{i_l})$ where $I=\{i_1,\ldots,i_l\}$. 

The $\gamma_i$'s are not contained in $\Delta_{J_1\cap J_2}$ and we divide them into the following three sets. Assume without loss of generality that $\gamma_1,\ldots,\gamma_m\in\Phi_{J_1}^+\setminus\Phi_{J_2}^+$, $\gamma_{m+1},\ldots,\gamma_{m+p}\in\Phi_{J_2}^+\setminus\Phi_{J_1}^+$ and $\gamma_{m+p+1},\ldots,\gamma_{k}\notin\Phi_{J_1}^+\cup\Phi_{J_2}^+$. We have $m+p+q=k\geq1$ so at least one of $m+q$ and $p+q$ is strictly positive. Assume that $p+q>0$. Then $w$ contains a $J_1$-star at $(\beta+\sum_{i=1}^mc_i\gamma_i,c_{m+1}\gamma_{m+1},\ldots,c_k\gamma_k)$, contradicting $w$ being BP at $J_1$. 
\end{proof}

\begin{defin}\label{def:non-BP-witness}
We say that $(\beta,\tau)$ is a \emph{non-BP witness} for $J$ if there exists a $J$-star $(\beta,c_1\gamma_1,\ldots,c_k\gamma_k)$ such that $\tau=\beta+\sum_{i=1}^kc_i\gamma_i$. 
\end{defin}
With \Cref{def:non-BP-witness}, \Cref{thm:intro-bp-pattern-characterization} is saying that $w$ is BP at $J$ if there does not exist a non-BP witness $(\beta,\tau)$ such that $\beta\notin I(w)$ and $\tau\in I(w)$.

\begin{lemma}\label{lem:union-helper-decomposition}
Fix $J=J_1\cup J_2$ where $J_1$ and $J_2$ are connected subgraphs of the Dynkin diagram of $\Delta$. Let $(\beta,\tau)$ be a non-BP witness for $J$ with $\beta\notin \Phi_{J_1}^+\cup\Phi_{J_2}^+$. Then we can write $\beta=\beta_1+\cdots+\beta_l$, $l\geq2$, as a sum of positive roots such that each $(\beta_i,\tau)$ is a non-BP witness for $J$ or $J_1$ or $J_2$.
\end{lemma}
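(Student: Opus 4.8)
Write $T := \Supp(\beta)$. Since $\beta \in \Phi_{J_1 \cup J_2}^+$ lies in neither $\Phi_{J_1}^+$ nor $\Phi_{J_2}^+$, the set $T$ is the vertex set of a connected subdiagram of the Dynkin diagram, is contained in $J_1 \cup J_2$, and is contained in neither $J_1$ nor $J_2$; in particular $T$ meets both $J_1 \setminus J_2$ and $J_2 \setminus J_1$. As $J_1,J_2$ are connected and $T$ straddles them, the hypotheses force $J_1$ and $J_2$ into the same component of the diagram, so we may assume $W$ is irreducible and the diagram is a tree; then, when $J_1 \cap J_2 \ne \emptyset$, $T$ must also meet $J_1 \cap J_2$, since the geodesic in a tree joining a vertex of $J_1 \setminus J_2$ to a vertex of $J_2 \setminus J_1$ passes through the subtree $J_1 \cap J_2$. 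Fix the $J$-star $(\beta, c_1\gamma_1, \dots, c_k\gamma_k)$ with $\tau = \beta + \sum_i c_i\gamma_i$; as $\beta \ne \tau$ we have $k \ge 1$. I will use repeatedly that each $\gamma_i \in \Phi^+ \setminus \Phi_J^+$ (hence $\gamma_i \notin \Phi_{J_1}^+$ and $\gamma_i \notin \Phi_{J_2}^+$), that $\Supp(\gamma_i) \subseteq \Supp(\tau - \beta)$, and that $\beta + \sum_{i \in I} c_i \gamma_i \in \Phi^+$ for every $I \subseteq [k]$.

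The plan has two steps. First I would split $\beta$ itself: I claim one can write $\beta = \beta_1 + \dots + \beta_l$ with $l \ge 2$ as a sum of positive roots, each supported on $J_1$ or on $J_2$. The idea is to cut $T$ at a vertex of $J_1 \cap J_2$ (or, if $J_1 \cap J_2 = \emptyset$, along the single edge of $T$ joining $J_1$ to $J_2$) chosen so as to separate the part of $T$ lying in $J_1 \setminus J_2$ from the part lying in $J_2 \setminus J_1$, and to take the corresponding restrictions of $\beta$. In simply-laced type this works directly from the tree structure, once one checks that the restrictions involved are again roots; for the other finite types, where a coefficient of $\beta$ may exceed $1$, I would carry out the splitting by a short inspection of $\Phi^+$ in the style of the proof of \Cref{lem:simple-Jstar}, with $F_4$ and $E_n$ dispatched by a quick loop over the root system. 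In a handful of configurations the only available split forces a summand $\beta_m$ whose support straddles $J_1$ and $J_2$; this is exactly the case in which $(\beta_m, \tau)$ will be certified as a non-BP witness for $J$, and is why the statement allows $J$ alongside $J_1$ and $J_2$.

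Second, given such a split I would show that each $(\beta_j, \tau)$ is a non-BP witness for the appropriate member of $\{J, J_1, J_2\}$ by exhibiting an explicit star from $\beta_j$ summing to $\tau$. Its $\gamma$-part is built by grouping the $\gamma_i$ according to the connected components of $\Supp(\tau - \beta)$ and fusing each group together with the chain of complementary summands $\beta_m$ connecting it to $\Supp(\beta_j)$; each root so produced contains some $\gamma_i$, hence a node outside $J$, so it lies outside $\Phi_J^+$ and a fortiori outside $\Phi_{J_1}^+$ and $\Phi_{J_2}^+$, as a star requires; and the cube-positivity of the new star holds because each of its partial sums is, through those chains, a sub-sum of some $\beta + \sum_{i \in I} c_i \gamma_i$, hence a positive root.

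The main obstacle is precisely this second step: making the fusion both well defined and compatible with cube-positivity. One cannot simply reuse the $\gamma_i$ unchanged, since $\beta_j$ together with a $\gamma_i$ supported far from $\Supp(\beta_j)$ has disconnected support and so is not a root; the $\gamma_i$-groups must therefore be fused with the intervening summands $\beta_m$ into genuine roots abutting $\Supp(\beta_j)$, and this must be done while keeping enough independence that all $2^{(\#\,\text{fused roots})}$ partial sums land in $\Phi^+$. Establishing this uniformly — together with the type-by-type bookkeeping for the coefficient-$\ge 2$ cases in the first step — is where essentially all the work lies; the remainder is support-bookkeeping and the elementary properties of $\Phi^+$ recorded in \Cref{sec:preliminaries}.
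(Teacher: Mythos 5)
Your outline correctly identifies the two things that must be done (split $\beta$ into smaller positive roots, then manufacture a star from each piece to $\tau$), and the first step is right in spirit: in type $A$, for instance, one can always cut $\beta$ at a vertex of $J_1\cap J_2$ (or along the edge joining $J_1$ to $J_2$) into $\beta_1\in\Phi_{J_1}^+$ and $\beta_2\in\Phi_{J_2}^+$, which is exactly how the paper disposes of that case. But the proposal stops short of a proof precisely at the point you yourself flag as ``where essentially all the work lies'': the construction of the new stars. The one justification offered for cube-positivity --- that each partial sum of the fused star ``is a sub-sum of some $\beta+\sum_{i\in I}c_i\gamma_i$, hence a positive root'' --- is not a valid inference: a sub-sum of a positive root need not be a root. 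That is the very problem you note a sentence earlier for $\beta_j+\gamma_i$ with $\Supp(\beta_j)$ and $\Supp(\gamma_i)$ non-adjacent, and fusing in the intervening $\beta_m$'s only replaces it with vectors like $\beta_j+\beta_m+c_i\gamma_i$, which are again merely sub-sums of $\beta+c_i\gamma_i$ and must be shown to be roots by some other means. In the non-simply-laced and exceptional types, where $\beta$ can have coefficients at least $2$ and the multiset $\{c_i\}$ can be $\{1,2\}$ or $\{3\}$, neither the existence of the splitting nor the root-hood of the fused partial sums is routine, and no argument is given.

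For comparison, the paper does not attempt a uniform construction at all. It observes that the entire statement (the splitting together with the witness property) is determined by boundedly many parameters describing $J_1,J_2,\beta,\tau$, shows for the classical types that any instance in high rank can be transferred to an instance in rank at most $10$ by re-indexing those parameters, and then verifies the finitely many cases $B_9, C_9, D_{10}, E_8, F_4, G_2$ by computer. To salvage your approach you would either need to carry out the fusion argument type by type (essentially reproducing that finite check by hand) or supply a correct uniform reason why the fused partial sums are roots; as written, the proposal is a plan rather than a proof.
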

\begin{proof}
We first reduce the claim to a finite check. A case analysis for classical types as in the proof of \Cref{lem:simple-Jstar} is possible. For example in type $A_{n-1}$, we can always write $\beta=\beta_1+\beta_2$ where $\beta_1\in\Phi_{J_1}^+$ and $\beta_2\in\Phi_{J_2}^+$ and it is not hard to see that $(\beta_i,\tau)$ is a non-BP witness for $J_i$, $i\in\{1,2\}$. 

For the classical types, we claim that a finite check is enough. Suppose that the lemma statement is established for type $D_r$ with $r\leq 10$. Now let $\Phi$ be a root system of type $D_n$ with $n>10$ and let $\Phi'$ be a root system of type $D_{r}$ with $r\leq 10$ to be determined later. The connected subgraph $J_1$ has one of the following forms: $\{\alpha_{i_1},\ldots,\alpha_{i_2}\}$ for some $i_1\leq i_2\leq n-2$, $\{\alpha_{i_1},\ldots,\alpha_{n-2},\alpha_{n-1}\}$ for some $i_1\leq n-2$, $\{\alpha_{i_1},\ldots,\alpha_{n-2},\alpha_n\}$ for some $i_1\leq n-2$, or $\{\alpha_{i_1},\ldots,\alpha_{n-2},\alpha_{n-1},\alpha_n\}$ for some $i_1\leq n-2$. For each case respectively, we construct $J_1'$ in the root system $\Phi'$ with the exact same form: $\{\alpha_{i_1'},\ldots,\alpha_{i_2'}\}$ for some $i_1'\leq i_2'\leq r-2$, $\{\alpha_{i_1'},\ldots,\alpha_{r-2},\alpha_{t-1}\}$ for some $i_1'\leq r-2$ and so on. Do the same thing for $J_2$ and $J_2'$. The indices $i_1'$ and $i_2'$ will be determined later. Similarly, modulo a finite number of possibilities, we construct a root $\beta'$ from $\beta$: for example, if $\beta=\alpha_{n}+\alpha_{n-1}+2\alpha_{n-2}+\cdots 2\alpha_{i_5}+\alpha_{i_5+1}+\cdots+\alpha_{i_6}$ for some $i_5\geq i_6$, then we write $\beta'=\alpha_{r}+\alpha_{r-1}+2\alpha_{r-2}+\cdots+2\alpha_{i_5'}+\alpha_{i_5'+1}+\cdots+\alpha_{i_6'}$ for some $i_5'\geq i_6'$ to be determined later. Do the same thing for $\tau$ and $\tau'$. Each $J_1,J_2,\beta,\tau$ contributes at most two indices. Choose the indices $i_1',\ldots,i_8'\in[r-2]$ so that they have the exact same order (with equalities and inequalities) as $i_1,\ldots,i_8\in[n-2]$. Take the construction $\beta'=\beta_1'+\cdots+\beta_l'$ for type $D_r$ to recover a construction $\beta=\beta_1+\cdots+\beta_l$ for type $D_n$. Note that the requirements in \Cref{def:Jstar} are all preserved in this way.

As a result, checking up to $B_9,C_9,D_{10},E_8,F_4$, and $G_2$ now suffices, which we do quickly via computer.
\end{proof}

\begin{prop}\label{prop:BP-union}
If $w$ is BP at $J_1$ and at $J_2$, then $w$ is BP at $J_1\cup J_2$.
\end{prop}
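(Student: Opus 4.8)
The plan is to deduce this from \Cref{thm:intro-bp-pattern-characterization} together with \Cref{lem:union-helper-decomposition}, after first reducing to the case where $J_1$, $J_2$, and $J_1\cup J_2$ are all connected. For the reduction I would use that \Cref{lem:totally-disconnected-BP}, iterated over connected components, says $J\in\BP(w)$ if and only if every connected component of $J$ lies in $\BP(w)$. Hence it is enough to show each connected component $C$ of $J_1\cup J_2$ lies in $\BP(w)$. Such a $C$ is the union of those connected components of $J_1$ and of $J_2$ that it contains; each of these is connected and lies in $\BP(w)$. Ordering them as $D_1,\ldots,D_r$ so that each partial union $D_1\cup\cdots\cup D_t$ is connected (possible since $C$ itself is connected), one may add them in one at a time, so it suffices to prove: if $K_1,K_2\in\BP(w)$ are connected and $K:=K_1\cup K_2$ is connected, then $K\in\BP(w)$.

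To prove this, I would argue by contradiction: suppose $w$ is BP at $K_1$ and at $K_2$ but not at $K$. By \Cref{thm:intro-bp-pattern-characterization}, in the reformulation following \Cref{def:non-BP-witness}, there is a non-BP witness $(\beta,\tau)$ for $K$ with $\beta\notin I(w)$ and $\tau\in I(w)$; I would choose one with $\beta$ minimal in the root poset, which is possible since this set of witnesses is nonempty and finite. Let $(\beta,c_1\gamma_1,\ldots,c_k\gamma_k)$ be a $K$-star realizing it, so $\tau=\beta+\sum_i c_i\gamma_i$. If $\beta\in\Phi_{K_1}^+$, then since $\Phi_{K_1}^+\subseteq\Phi_K^+$ the very same data $(\beta,c_1\gamma_1,\ldots,c_k\gamma_k)$ is a $K_1$-star, and $w$ contains it (the conditions $\beta\notin I(w)$, $\tau\in I(w)$ are unchanged), contradicting $K_1\in\BP(w)$; the case $\beta\in\Phi_{K_2}^+$ is symmetric.

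The remaining case is $\beta\notin\Phi_{K_1}^+\cup\Phi_{K_2}^+$, where \Cref{lem:union-helper-decomposition} applies: write $\beta=\beta_1+\cdots+\beta_l$ with $l\geq2$, each $\beta_i\in\Phi^+$, and each $(\beta_i,\tau)$ a non-BP witness for $K$, $K_1$, or $K_2$. I claim some $\beta_i\notin I(w)$: since $\beta\notin I(w)$ and $I(w)$ is biclosed, this follows from the standard fact that a positive root which is a sum of positive roots $\beta_1+\cdots+\beta_l$ can be reordered so that all partial sums are positive roots (at each stage pick $i$ with $\langle\beta,\beta_i\rangle>0$, so that $\beta-\beta_i$ is again a positive root, and induct) — for otherwise closure of $I(w)$ under sums of inversions that remain roots would force $\beta\in I(w)$. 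Fix such an $i$. Then $(\beta_i,\tau)$ is a non-BP witness with $\beta_i\notin I(w)$ and $\tau\in I(w)$: if it is a witness for $K_1$ or $K_2$ we reach a contradiction with $K_1,K_2\in\BP(w)$ exactly as above; and if it is a witness for $K$, then since $\beta-\beta_i=\sum_{j\ne i}\beta_j$ is a nonzero nonnegative combination of simple roots we have $\beta_i<\beta$ in the root poset, contradicting minimality of $\beta$. Every case yields a contradiction, so $w$ is BP at $K$.

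The combinatorial heavy lifting is already packaged into \Cref{lem:simple-Jstar} and \Cref{lem:union-helper-decomposition}, so the main obstacle I anticipate is purely organizational: setting up the extremal choice (minimality of $\beta$ in the root poset) so that the recursion through \Cref{lem:union-helper-decomposition} is well-founded, and carefully justifying the reduction to connected $J_1,J_2,J_1\cup J_2$ via iterated use of \Cref{lem:totally-disconnected-BP}. The step "some $\beta_i\notin I(w)$" relies on the reordering fact for root sums, which is standard but should be stated explicitly.
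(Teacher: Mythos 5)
Your proof is correct and follows essentially the same route as the paper's: reduce to connected $J_1,J_2$ via \Cref{lem:totally-disconnected-BP}, take a non-BP witness $(\beta,\tau)$ with $\beta$ minimal in the root poset, decompose $\beta$ via \Cref{lem:union-helper-decomposition}, and derive a contradiction from whichever summand $\beta_i\notin I(w)$. You additionally spell out two points the paper leaves implicit — the easy case $\beta\in\Phi_{K_1}^+\cup\Phi_{K_2}^+$ before invoking \Cref{lem:union-helper-decomposition}, and the reordering fact needed to extract some $\beta_i\notin I(w)$ from biclosedness — which is a welcome tightening rather than a different argument.
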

\begin{proof}
By \Cref{lem:totally-disconnected-BP}, we can assume that $J_1$ and $J_2$ are connected. We use proof by contradiction. If $w$ is not BP at $J=J_1 \cup J_2$, then there exists a non-BP witness $(\beta,\tau)$ for $J$ such that $\beta\notin I(w)$ and $\tau\in I(w)$. Pick a smallest such $\beta$ in the root poset. By \Cref{lem:union-helper-decomposition}, we have $\beta=\beta_1+\cdots+\beta_{l}$, $l\geq2$, such that each $(\beta_i,\tau)$ is a non-BP witness for $J$ or $J_1$ or $J_2$. Since $\beta\notin I(w)$, there exists $\beta_j\notin I(w)$. If $(\beta_j,\tau)$ is a non-BP witness for $J$, then the minimality of $\beta$ is contradicted; if $(\beta_j,\tau)$ is a non-BP witness for $J_i$, $i\in\{1,2\}$, then $w$ is not BP at $J_i$, a contradiction. As a result, $w$ is BP at $J$.
\end{proof}

\begin{proof}[Proof of \Cref{thm:intro-union-intersection}]
By \Cref{prop:BP-intersection,prop:BP-union} we have that $\BP(w)$ is closed under union and intersection. Thus $\BP(w)$, ordered by containment, is a sublattice of the Boolean lattice of subsets of $S$, and therefore a distributive lattice.
\end{proof}

\subsection{Properties of the BP poset}
\label{sub:bp-properties}
For a poset or preorder $P$, let $\mathcal{L}(P)$ denote the lattice of its order ideals. Consider $\BP(w)$ ordered by inclusion. It is a distributive lattice, with the meet and join operations given by intersection and union (\Cref{thm:intro-union-intersection}). By the fundamental theorem of finite distributive lattices (see for example \cite[Theorem 3.4.1]{EC1}), $\BP(w)$ is isomorphic to the lattice of order ideals of some poset.

\begin{defin}\label{def:BP-poset}
For $w\in W$, its \emph{Billey--Postnikov (BP)  preposet} $\tbp(w)$ is defined on $S$ (or, for convenience, on an indexing set for $S$) such that $\BP(w)=\mathcal{L}(\tbp(w))$. Denote its relations by $\leq_w$. The \emph{Billey--Postnikov poset} $\bp(w)$ is the quotient of $\tbp(w)$ by the equivalence relation $a\sim b$ if $a\leq_w b$ and $b\leq_w a$. The elements of $\bp(w)$ are the blocks of a set partition $\mathcal{S}_w$ of $S$.
\end{defin}
\begin{ex}
Consider $w=4231 \in \mathfrak{S}_4$ and $w'=3412$. Then $\BP(w)=\big\{\emptyset,\{1\},\{3\},\{1,3\},[3]\big\}$ and $\BP(w')=\{\emptyset,\{2\},[3]\}$; the corresponding BP posets are shown in \Cref{fig:BP-poset-example-4231-3412}. Notice that we have $1=3$ in the BP preposet $\tbp(w')$, so $\bp(w')$ has an element $\{1,3\}$ which is not a singleton.
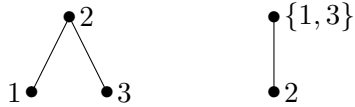
\begin{figure}[h!]
\centering
\begin{tikzpicture}[scale=1.0]
\node at (-0.5,0) {$\bullet$};
\node[left] at (-0.5,0) {$1$};
\node at (0.5,0) {$\bullet$};
\node[right] at (0.5,0) {$3$};
\node at (0,1) {$\bullet$};
\node[right] at (0,1) {$2$};
\draw(-0.5,0)--(0,1)--(0.5,0);
\end{tikzpicture}\qquad\qquad
\begin{tikzpicture}[scale=1.0]
\node at (0,0) {$\bullet$};
\node[right] at (0,0) {$2$};
\node at (0,1) {$\bullet$};
\node[right] at (0,1) {$\{1,3\}$};
\draw(0,0)--(0,1);
\end{tikzpicture}
\caption{The BP posets $\bp(w)$ and $\bp(w')$ for $w=4231$ (left) and $w'=3412$ (right).}
\label{fig:BP-poset-example-4231-3412}
\end{figure}
\end{ex}

We now use the language of \emph{closure operators} to describe the BP poset. This will later help us construct a fast algorithm to compute $\bp(w)$.
\begin{defin}\label{def:closure}
For $w\in W$ and $A\subset S$, the \emph{$w$-closure} of $A$ is \[\cl_w(A):=\bigcap\{J\in\BP(w)\:|\: J\supseteq A\}.\]
For simplicity, denote the closure $\cl_w(\{i\})$ of a singleton by $\cl_w(i)$.
\end{defin}
We now give some basic properties of $\cl_w$.
\begin{lemma}\label{lem:closure-basic}
Let $w$ be an element of the finite Weyl group $W$.
\begin{enumerate}
\item For $A\subset S$, we have $A\subset\cl_w(A)$. For $A\subset B$, we have $\cl_w(A)\subset\cl_w(B)$.
\item For $A\subset S$, $\cl_w(\cl_w(A))=\cl_w(A)$.
\item In $\tbp(w)$, $i\leq_w j$ if and only if $i\in\cl_w(j)$.
\item If $A$ is connected, $\cl_w(A)$ is also connected.
\item In $\bp(w)$, every element is covered by at most $l$ elements, where $l$ is the number of leaves in the Dynkin diagram for $W$. 
\end{enumerate}
\end{lemma}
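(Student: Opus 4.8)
Parts (1)–(3) are formal consequences of the definition of a closure operator. For (1), if $A \subset B$ then every $J \in \BP(w)$ containing $B$ also contains $A$, so the intersection defining $\cl_w(B)$ is over a subfamily of that defining $\cl_w(A)$, giving $\cl_w(A) \subset \cl_w(B)$; and $A \subset \cl_w(A)$ since $A$ is contained in every $J$ in the defining family. For (2), note that $\cl_w(A) \in \BP(w)$ since $\BP(w)$ is closed under intersection (\Cref{thm:intro-union-intersection}), so $\cl_w(A)$ is the minimum element of $\{J \in \BP(w) : J \supseteq A\}$; applying $\cl_w$ to it returns itself. For (3), the characterization $\BP(w) = \mathcal{L}(\tbp(w))$ means $J \in \BP(w)$ iff $J$ is an order ideal of $\tbp(w)$, i.e.\ downward-closed under $\leq_w$; so $\cl_w(j)$ is the smallest order ideal containing $j$, which is exactly the principal ideal $\{i : i \leq_w j\}$, and $i \leq_w j \iff i \in \cl_w(j)$. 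These are short and I will present them in that order.

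Part (4) is where the real content lies, and I expect it to be the main obstacle. The plan is to argue by contradiction: suppose $A$ is connected but $\cl_w(A)$ is disconnected, and write $\cl_w(A) = J' \sqcup K'$ as a disjoint union of nonempty, totally disconnected pieces, with $A \subset J'$ (possible since $A$ is connected and hence lies entirely in one component of $\cl_w(A)$). Since $\cl_w(A) \in \BP(w)$, by \Cref{lem:totally-disconnected-BP} applied to the totally disconnected pair $J', K'$, we get $J' \in \BP(w)$. But $J' \supseteq A$ and $J' \subsetneq \cl_w(A)$, contradicting the minimality of $\cl_w(A)$ among elements of $\BP(w)$ containing $A$. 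So in fact $\cl_w(A)$ has only one component, i.e.\ is connected. The one point requiring a little care is that $J'$ might not be connected — but totally-disconnectedness of the pair only needs $J'$ and $K'$ to have no edges between them, which holds if we take $K'$ to be one full connected component of $\cl_w(A)$ and $J'$ its complement inside $\cl_w(A)$; then $J', K'$ are totally disconnected and the argument goes through. (Here I should double-check that \Cref{lem:totally-disconnected-BP} as stated only requires total disconnectedness of the pair and not connectedness of each part — from its statement it does.)

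For part (5), I will combine (4) with a counting argument. Let $\bar A$ be an element of $\bp(w)$, i.e.\ a block of $\mathcal{S}_w$, viewed as a subset $A \subset S$; by (3) and the definition of the quotient, $A$ is a single equivalence class of $\leq_w$, and (using (4)) I claim $A$ is connected — indeed $A = \cl_w(i)$ for any $i \in A$ since every $j \sim i$ satisfies $j \in \cl_w(i)$ and $\cl_w(i) \subseteq \cl_w(j) \subseteq A$, forcing equality; wait, more simply, $A = \{j : j \le_w i \text{ and } i \le_w j\}$, and one checks $\cl_w(i) = A$ when $A$ is the equivalence class, so $A$ is connected by (4). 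Now consider the elements covering $\bar A$ in $\bp(w)$. Passing to $\tbp(w)$, a cover corresponds to adding a new "layer" just above $A$; concretely, for $J := \cl_w(A) = A$ (a down-set), the elements $s \in S \setminus A$ that are minimal with $A \cup \{s\}$ still "reachable" — I will argue each cover $\bar B \gtrdot \bar A$ is detected by some $s \in B$ adjacent in the Dynkin diagram to $A$, because $\cl_w(A \cup \{s\})$ must be connected by (4) and contain $A$, so $A \cup \{s\}$ must already attach to $A$ along the diagram, meaning $s$ lies in the neighborhood of $A$. Since $A$ is connected, a connected set's outward neighborhood in a forest (the Dynkin diagram) meets at most one new branch per leaf-direction; more precisely the number of connected sets $A \cup (\text{branch})$ hanging off $A$ is at most the number of diagram-leaves not already in $A$, hence at most $l$. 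Distinct covers give distinct such branches, so there are at most $l$ covers. The delicate part here is making precise the correspondence "cover of $\bar A$ in $\bp(w)$ $\leftrightarrow$ distinct branch of the Dynkin diagram attached to $A$"; I would formalize it by showing the map sending a cover $\bar B$ to the connected component of $\cl_w(B) \setminus A$ adjacent to $A$ (nonempty and well-defined by (4)) is injective, and that its image consists of branches each of which contains a distinct diagram-leaf.
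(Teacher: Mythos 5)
Parts (1)--(4) of your proposal are correct and essentially identical to the paper's proof: (1)--(3) are the formal closure-operator arguments, and for (4) the paper likewise decomposes $\cl_w(A)$ into connected components, notes $A$ lies in one component $A_k$, and applies \Cref{lem:totally-disconnected-BP} to conclude $A_k\in\BP(w)$, whence $\cl_w(A)=A_k$ by minimality. Your variant (peeling off one component $K'$ and keeping its complement $J'$) is the same argument.

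Part (5) is where you have a genuine gap. Your counting framework is right --- connectedness of closures plus the tree structure of the diagram bounds the number of ``directions'' by the number of leaves $l$ --- but the crucial step, injectivity of the assignment from covering classes to branches, is exactly what you do not prove, and it is the heart of the statement. Moreover your intermediate claim that every cover $\bar B\gtrdot\bar A$ must contain an element $s$ adjacent to $A$ in the diagram is unjustified (nothing in the cover relation forces this: for $i\in B$ the closure $\cl_w(i)$ contains the whole path from $i$ to $A$, but the intermediate vertices of that path need not lie in $B$, nor need their classes sit between $\bar A$ and $\bar B$), and your proposed map $\bar B\mapsto$ ``the component of $\cl_w(B)\setminus A$ adjacent to $A$'' is not well defined, since $\cl_w(i)\setminus A$ can have several components meeting $A$ and different representatives $i,i'\in B$ can lie in different components (compare $B=\{1,3\}$ over $A=\{2\}$ for $w'=3412$). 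The paper's argument avoids both issues: pick one representative $i_k$ of each of $l+1$ alleged covering classes of $j$; each $\cl_w(i_k)$ is connected and contains $j$, hence contains the unique path $P_k$ from $i_k$ to $j$; since the diagram is covered by the $l$ leaf-to-$j$ paths, two representatives $i_k,i_{k'}$ lie on the same one, so $P_k\subsetneq P_{k'}$ (say), giving $i_k\in\cl_w(i_{k'})$, i.e.\ $i_k\le_w i_{k'}$, contradicting the incomparability of distinct covers. That nesting-of-paths comparability is the missing ingredient you would need to supply to complete your sketch.
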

\begin{proof}
(1) follows from the definition and (2) follows from \Cref{thm:intro-union-intersection}, which implies that $\cl_w(A) \in \BP(w)$ for any $A \subset S$. Together, (1) and (2) mean that $\cl_w:2^{S}\rightarrow 2^S$ is indeed a closure operator.

For (3), by \Cref{def:BP-poset}, $i\leq_w j$ if and only if every $J\in\BP(w)$ that contains $j$ also contains $i$, if and only if the unique minimum subset $\cl_w(j)\in\BP(w)$ that contains $j$ also contains $i$.

For (4), let $\cl_w(A)=A_1\sqcup\cdots \sqcup A_m$ be a decomposition into connected components. Since $A\subset\cl_w(A)$ and $A$ is connected, $A\subset A_k$ for some $k$. By \Cref{lem:totally-disconnected-BP}, $w$ is BP at $A_k$ so by \Cref{def:closure}, $\cl_w(A)\subset A_k$, and thus $\cl_w(A)=A_k$ which is connected.

For (5), assume to the contrary that the equivalence class of $j\in S$ is covered by the distinct equivalence classes of $i_1,\ldots,i_{l+1}$ in $\bp(w)$. Each $\cl_w(i_k)$ is connected and contains $j$, so $\cl_w(i_k)$ contains the unique path $P_k$ in the Dynkin diagram from $i_k$ to $j$. Since the Dynkin diagram is covered by the $l$ paths connecting the leaves to $j$, there must be some $P_k$ strictly contained in $P_{k'}$, so $i_k \leq_w i_{k'}$, contradicting their incomparability in $\bp(w)$.
\end{proof}

\subsection{Specialization to type $A$}
In type $A_{n-1}$, the only $J$-stars are of the form $(e_i-e_j,e_j-e_k)$, $(e_j-e_k,e_i-e_j)$ or $(e_j-e_k,e_i-e_j,e_k-e_l)$ with $i<j<k<l$. We can therefore characterize BP decompositions much more explicitly.
\begin{defin}
We say that $(w,[a,b])$, where $w\in \mathfrak{S}_n$ and $1\leq a<b\leq n$, \emph{contains the pattern} $(u,[c,d])$, where $u\in \mathfrak{S}_k$, $1\leq c<d\leq k$, if there exists indices $1\leq i_1<\cdots<i_k\leq n$ such that $w(i_x)<w(i_y)$ if and only if $u(x)<u(y)$ for all $1\leq x<y\leq k$, and $i_x\in[a,b]$ if and only if $x\in[c,d]$. We say that $(w,[a,b])$ \emph{avoids} $(u,[c,d])$ if $(w,[a,b])$ does not contain $(u,[c,d])$. We also write a pattern $(u,[c,d])$ as \[u_1\cdots u_{c-1}\underline{u_c\cdots u_d}u_{d+1}\cdots u_{k}\]
in one-line notation for simplicity.
\end{defin}

The following follows from \Cref{thm:intro-bp-pattern-characterization} after unpacking definitions.
\begin{cor}\label{cor:BP-patterns-typeA}
Let $J=\{a,\ldots,b-1\}\subset S$ be connected and $w\in \mathfrak{S}_n$. Then $w$ is BP at $J$ if and only if $(w,[a,b])$ avoids $\underline{23}1$, $3\underline{12}$, and $3\underline{14}2$. 
\end{cor}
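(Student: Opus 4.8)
The plan is to specialize \Cref{thm:intro-bp-pattern-characterization} to type $A_{n-1}$ by translating the abstract statement ``$w$ avoids all $J$-star patterns'' into a concrete permutation-pattern condition on the pair $(w,[a,b])$. First I would recall, as noted just before the corollary, that when $\Phi$ is of type $A_{n-1}$ and $J=\{a,\dots,b-1\}$, every $J$-star has one of the three forms $(e_i-e_j,\,e_j-e_k)$, $(e_j-e_k,\,e_i-e_j)$, or $(e_j-e_k,\,e_i-e_j,\,e_k-e_l)$ with $i<j<k<l$; here the ``$\beta$'' coordinate $e_p-e_q$ lies in $\Phi_J^+$ exactly when $a\le p<q\le b$, and the ``$\gamma$'' coordinates must lie outside $\Phi_J^+$. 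This enumeration follows because in type $A$ all structure constants $c_i$ equal $1$ (so the multiset $\{c_1,\dots,c_k\}$ from the Remark is $\{1\}$ or $\{1,1\}$) and a sum $\beta+\sum_{i\in I}c_i\gamma_i$ of roots $e_p-e_q$ stays a root only in these configurations.

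Next I would unpack what it means for $w\in\mathfrak{S}_n$ to contain such a $J$-star, using that $e_p-e_q\in I(w)$ iff $w(p)>w(q)$ (i.e. $(p,q)$ is an inversion of $w$). Taking each of the three $J$-star shapes in turn:
\begin{itemize}
    \item[(i)] $(e_i-e_j,\,e_j-e_k)$ with $a\le i<j\le b$ and $k\notin[a,b]$, meaning $k>b$: containment says $w(i)<w(j)$ but $w(i)>w(k)$, so the subsequence $w(i),w(j),w(k)$ has pattern $231$ with the first two positions inside $[a,b]$ and the third outside (to the right) --- this is exactly the pattern $\underline{23}1$;
    \item[(ii)] $(e_j-e_k,\,e_i-e_j)$ with $a\le j<k\le b$ and $i\notin[a,b]$, meaning $i<a$: containment says $w(j)<w(k)$ but $w(i)>w(j)$, giving pattern $312$ on positions $i<j<k$ with the last two inside $[a,b]$ and the first outside (to the left) --- this is $3\underline{12}$;
    \item[(iii)] $(e_j-e_k,\,e_i-e_j,\,e_k-e_l)$ with $a\le j<k\le b$, $i<a$, $l>b$: containment gives $w(j)<w(k)$ (the ``$\beta$'' is not an inversion) while $w(i)>w(j)$, $w(k)>w(l)$, and (summing all three) $w(i)>w(l)$; the relative order of $w(i),w(j),w(k),w(l)$ is then $3142$, with the middle two positions inside $[a,b]$ --- this is $3\underline{14}2$.
\end{itemize}
One has to check that these three inequalities pin down the full pattern $3142$ in case (iii) and no smaller pattern; in fact $w(i)>w(l)$, $w(i)>w(j)$, $w(j)<w(k)$, $w(k)>w(l)$ together with $w(j)$ being below $w(i)$ force the values to be ordered $w(j)<w(l)<w(k)<w(i)$, which is $3142$ (and I should double-check $w(j)<w(l)$ is forced, which it is since $w(j)$ is the minimum: $w(j)<w(k)$, and $w(j)$ vs $w(l)$: from $\beta=e_j-e_k$ not an inversion and $\beta+\gamma_1 = e_i-e_k$ an inversion we get the constraints, and a short case-check closes it). Conversely, any occurrence of one of $\underline{23}1$, $3\underline{12}$, $3\underline{14}2$ in $(w,[a,b])$ yields indices realizing the corresponding $J$-star, so $w$ fails to be BP at $J$; this is just reading the above equivalences backwards.

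I would expect the main (though modest) obstacle to be the bookkeeping in case (iii): verifying that containment of the length-$3$ $J$-star of type $A$ translates precisely to the boxed pattern $3\underline{14}2$ and not to a spurious weaker condition, i.e. confirming that the three inversion/non-inversion constraints exactly cut out the permutation pattern $3142$ on the relevant four positions with the correct placement relative to $[a,b]$. Once that dictionary is in place, the corollary follows immediately from \Cref{thm:intro-bp-pattern-characterization} together with the enumeration of type-$A$ $J$-stars, since a $J$-star is contained in $w$ in the sense of \Cref{def:Jstar} if and only if the corresponding pattern occurs in $(w,[a,b])$.
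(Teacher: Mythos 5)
Your overall strategy is exactly the paper's (the paper gives no details beyond ``follows from \Cref{thm:intro-bp-pattern-characterization} after unpacking definitions''), and your enumeration of the type~$A$ $J$-stars and the backward direction (pattern occurrence $\Rightarrow$ star containment) are fine. However, case (iii) as written does not work, and the problem is precisely the point you flagged as the ``main obstacle.'' By \Cref{def:Jstar}, containing the star $(e_j-e_k,\,e_i-e_j,\,e_k-e_l)$ imposes only two conditions: $\beta=e_j-e_k\notin I(w)$, i.e.\ $w(j)<w(k)$, and the \emph{full} sum $e_i-e_l\in I(w)$, i.e.\ $w(i)>w(l)$. It imposes nothing on the individual $\gamma$'s or on partial sums, so your extra conditions $w(i)>w(j)$ and $w(k)>w(l)$ are not part of containment (the same misreading makes your stated condition in case (ii) too weak --- there the sum condition is $w(i)>w(k)$, not $w(i)>w(j)$, and only the former forces $312$). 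Consequently the two-ray star does \emph{not} ``translate precisely to $3\underline{14}2$'': e.g.\ $w(i),w(j),w(k),w(l)$ in relative order $4123$ satisfies $w(j)<w(k)$ and $w(i)>w(l)$ but is not a $3142$ occurrence. (Separately, even the ordering you derive, $w(j)<w(l)<w(k)<w(i)$, is the pattern $4132$, not $3142$.)

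The correct resolution is that the equivalence holds at the level of sets of patterns, not star-by-star: if $w(j)<w(k)$ and $w(i)>w(l)$ with $i<a\le j<k\le b<l$, then either $w(i)>w(k)$, in which case $i,j,k$ form a $3\underline{12}$; or $w(j)>w(l)$, in which case $j,k,l$ form a $\underline{23}1$; or neither holds, in which case $w(j)<w(l)<w(i)<w(k)$ and $i,j,k,l$ form a genuine $3\underline{14}2$. With this trichotomy inserted (and the sum condition corrected in case (ii)), your argument closes and coincides with the intended proof.
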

\begin{remark}
We deduce \cite[Thm.~1.1]{alland-richmond-BP-pattern}, which states that $w$ is BP at $[n-1]\setminus\{k\}$ if and only if $w$ avoids $23|1$ and $3|12$, where the values to the left of the bar ``$|$" appear in the first $k$ indices of $w$ while the values to the right appear in the last $n-k$ indices. Based on our setup, $w$ is BP at $[n-1]\setminus\{k\}$ if and only if $w$ is BP at both $\{1,\ldots,k-1\}$ and $\{k+1,\ldots,n-1\}$. The rest follows from \Cref{cor:BP-patterns-typeA}. Note that when a connected set $J$ contains the leaf $1$ or $n-1$, we do not need to worry about the pattern $3\underline{14}2$. 
\end{remark}

\begin{lemma}\label{lem:step-by-step-closure-construction}
Let $A=\{a,\ldots,b-1\}\subset[n-1]$ be connected. If $w$ is not BP at $A$, so $(w,[a,b])$ contains one of $\underline{23}1$, $3\underline{12}$, or $3\underline{14}2$ at indices $i_1<\cdots<i_k$, then $\cl_w(A)\supseteq[i_1,i_k-1]$.
\end{lemma}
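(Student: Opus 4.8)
The plan is to prove this by showing that every $J \in \BP(w)$ containing $A = \{a,\ldots,b-1\}$ must contain all of $\{i_1,\ldots,i_k-1\}$, i.e. $\{i_1, i_1+1, \ldots, i_k-1\}$; since $\cl_w(A)$ is the intersection of all such $J$, this gives the claimed containment. By \Cref{lem:closure-basic}(4), since $A$ is connected, $\cl_w(A)$ is connected, so it suffices to show $\cl_w(A)$ contains the positions $i_1$ and $i_k - 1$ — then by connectivity (and the fact that $\cl_w(A)$ is an interval in type $A$, being a connected subset of the path $[n-1]$) it must contain the whole interval $[i_1, i_k-1]$.

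First I would fix $J \in \BP(w)$ with $A \subseteq J$ and argue $J \supseteq [i_1, i_k-1]$. The key idea is a ``localization'' of the $J$-star witnessing failure of BP at $A$. By \Cref{cor:BP-patterns-typeA}, the occurrence of $\underline{23}1$, $3\underline{12}$, or $3\underline{14}2$ in $(w,[a,b])$ at positions $i_1 < \cdots < i_k$ corresponds to a $J$-star for $A$, i.e.\ a non-BP witness $(\beta,\tau)$ with $\beta \notin I(w)$, $\tau \in I(w)$, where $\beta \in \Phi_A^+$ and $\tau = \beta + \sum c_i\gamma_i$ with $\gamma_i \in \Phi^+ \setminus \Phi_A^+$. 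Explicitly, in the $\underline{23}1$ case this is $(e_{i_1}-e_{i_2}, e_{i_2}-e_{i_3})$; in the $3\underline{12}$ case $(e_{i_2}-e_{i_3}, e_{i_1}-e_{i_2})$; in the $3\underline{14}2$ case $(e_{i_2}-e_{i_3}, e_{i_1}-e_{i_2}, e_{i_3}-e_{i_4})$. The crucial observation is that if $J \supseteq A$ but $J \not\supseteq [i_1,i_k-1]$, then $\beta$ still lies in $\Phi_J^+$ (since its support, an interval inside $[i_1, i_k-1]$ that is contained in the support of the $23$ or $12$ part of the pattern, which lies within $[a,b-1] \subseteq J$... here I need to be careful: actually $\beta$'s support need not lie in $A$), while at least one $\gamma_i$ escapes $\Phi_J^+$. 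More precisely: the simple root at position $i_k - 1$ (or $i_1$) is exactly the one whose membership in $J$ is forced, because it is needed to keep $\tau = e_{i_1} - e_{i_{k}}$ (say) supported in a way compatible with being a non-BP witness for $J$. The cleanest route is: if $J \supseteq A$ and $J$ induces a BP decomposition, then $(\beta, \tau)$ viewed relative to $J$ must fail to be a non-BP witness for $J$; this forces $\tau \in \Phi_J^+$ as well (otherwise $(\beta,\tau)$ remains a $J$-star for $J$ since $\beta \in \Phi_J^+$, $\gamma_i$-decomposition adapts), hence the entire interval $[i_1, i_k]$ of positions underlying $\beta$ and $\tau$ forces simple roots $i_1, \ldots, i_k - 1 \in J$.

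The key step, then, is the dichotomy: \emph{if $J \supseteq A$ but $j \notin J$ for some $j \in [i_1, i_k-1]$, then the same pattern occurrence exhibits that $w$ is not BP at $J$}, contradicting $J \in \BP(w)$. Concretely, one checks that with $\beta$ and the $\gamma_i$ as above, if some position $j \in [i_1,i_k-1]$ has $\alpha_j \notin \Delta_J$, then either $\beta \notin \Phi_J^+$ (impossible since $\beta$'s support avoids $j$... again need care) — the resolution is that one splits $\tau$ at the gap $j$ into $\tau = \tau' + \tau''$ where one piece lies in $\Phi_J^+$, giving a smaller $J$-star for $J$ with the same endpoints still witnessing non-BP-ness. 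This is essentially the localized version of \Cref{lem:union-helper-decomposition} / \Cref{prop:BP-union}, restricted to type $A$ where everything is transparent. So the proof structure is: (i) translate the pattern occurrence into an explicit $J$-star; (ii) for arbitrary $J \in \BP(w)$ with $A \subseteq J$, show by the splitting argument that $J$ must contain every simple root between $i_1$ and $i_k$; (iii) intersect over all such $J$.

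\textbf{Main obstacle.} I expect the main difficulty is \emph{Step (ii)}: correctly tracking which simple roots are forced into $J$ and verifying that omitting any of them re-exhibits the non-BP witness for $J$ itself. In particular one must handle all three patterns ($\underline{23}1$, $3\underline{12}$, $3\underline{14}2$) and, for each, check that the ``boundary'' simple root ($i_1$ and, on the other end, $i_k - 1$) genuinely cannot be dropped, while being precise about whether $\beta$ stays in $\Phi_J^+$ — the subtlety being that $\beta$'s support may protrude outside $A$ into the interval $[i_1, i_k-1]$, so its membership in $\Phi_J^+$ is itself part of what needs $J \supseteq [i_1, i_k-1]$. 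The bookkeeping is routine once set up, but it is the crux; everything else (connectivity of $\cl_w(A)$, the intersection argument) follows immediately from \Cref{lem:closure-basic} and \Cref{cor:BP-patterns-typeA}.
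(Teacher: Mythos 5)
Your strategy is viable but genuinely different from the paper's, which is much shorter: the paper uses \Cref{lem:closure-basic} to write $\cl_w(A)=\{c,\ldots,d-1\}$ as a \emph{connected} interval containing $A$ with $\cl_w(A)\in\BP(w)$, and then simply observes that $(w,[c,d])$ must avoid the given pattern occurrence, which forces $c\leq i_1$ and $d\geq i_k$ (for $3\underline{14}2$ one first gets $c\leq i_1$ \emph{or} $d\geq i_4$, and if only one holds the remaining indices form a $\underline{23}1$ or $3\underline{12}$ occurrence in $(w,[c,d])$, a contradiction). You instead control \emph{every} $J\in\BP(w)$ with $J\supseteq A$ — including disconnected $J$ — via the $J$-star formalism and then intersect; this avoids any appeal to connectivity of the closure but costs you the root-level bookkeeping. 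That bookkeeping does go through, and two remarks would let you finish it cleanly. First, your repeated worry that ``$\beta$'s support may protrude outside $A$'' is unfounded: in all three patterns $\beta$ joins two \emph{underlined} positions, so $\Supp(\beta)\subseteq[a,b-1]=A\subseteq J$ and $\beta\in\Phi_J^+$ automatically; nothing about $\beta$'s membership is ``part of what needs $J\supseteq[i_1,i_k-1]$.'' Second, for $\underline{23}1$ and $3\underline{12}$ there is only one $\gamma$, whose support contains the missing index $j$, so the original witness survives verbatim and no splitting is needed; the splitting is only needed for $3\underline{14}2$ when exactly one of $\gamma_1=e_{i_1}-e_{i_2}$, $\gamma_2=e_{i_3}-e_{i_4}$ lands in $\Phi_J^+$, in which case the regrouped pair $(\beta+\gamma_2,\gamma_1)$ or $(\beta+\gamma_1,\gamma_2)$ is a $J$-star contained in $w$ because the pattern $3142$ guarantees $w(i_2)<w(i_4)$ and $w(i_1)<w(i_3)$. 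With those two points made explicit, your argument is complete and correct.
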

\begin{proof}
Recall that $\cl_w(A)\supseteq A$, $\cl_w(A)\in\BP(w)$, and that $\cl_w(A)=\{c,\ldots,d-1\}$ is connected, by \Cref{lem:closure-basic}. If $(w,[a,b])$ contains $\underline{23}1$ at indices $i_1<i_2<i_3$, then in order for $(w,[c,d])$ to avoid this occurrence of $\underline{23}1$, we must have $d\geq i_3$. The case of $3\underline{12}$ is the same. Now if $(w,[a,b])$ contains $3\underline{14}2$ at indices $i_1<i_2<i_3<i_4$, in order for $(w,[c,d])$ to avoid this occurrence, we need $c\leq i_1$ or $d\geq i_4$. Assume without loss of generality that $c\leq i_1$. If $d<i_4$, then $(w,[c,d])$ contains $\underline{23}1$ at indices $i_1<i_2<i_4$, contradicting $w$ being BP at $\cl_w(A)$. Therefore, $d\geq i_4$ as well.
\end{proof}

Although a direct computation of $\bp(w)$ would require checking $2^{n-1}$ parabolic decompositions, we show below that this poset can in fact be computed in polynomial time.

\begin{prop}
For any $w\in \mathfrak{S}_n$, there is a polynomial time algorithm (in $n$) to construct the BP poset $\bp(w)$ (or the BP preposet $\tbp(w)$).
\end{prop}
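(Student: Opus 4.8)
The plan is to reduce the construction of $\bp(w)$ to computing, for each simple reflection $j\in S=[n-1]$, the single-generator closure $\cl_w(j)$, and to compute each such closure by iterating the pattern test of \Cref{cor:BP-patterns-typeA} in the manner suggested by the name of \Cref{lem:step-by-step-closure-construction}.

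The first step is the reduction. By \Cref{lem:closure-basic}(3) the BP preposet $\tbp(w)$ is completely determined by the $n-1$ sets $\cl_w(j)$, since $i\leq_w j$ if and only if $i\in\cl_w(j)$; and $\bp(w)$ is recovered from $\tbp(w)$ by the mutual-comparability quotient of \Cref{def:BP-poset}, which is clearly polynomial-time. By \Cref{lem:closure-basic}(4) each $\cl_w(j)$ is a connected interval $\{a,\dots,b-1\}$, so it suffices to find its two endpoints in polynomial time.

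The main step is to compute $\cl_w(j)$ by the following loop, maintaining the invariant that the current set $A_t$ is a connected interval with $\{j\}\subseteq A_t\subseteq\cl_w(j)$. Start from $A_0=\{j\}$. Given $A_t=\{a,\dots,b-1\}$, apply \Cref{cor:BP-patterns-typeA} to test whether $w$ is BP at $A_t$: a brute-force search over the $O(n^4)$ relevant tuples of positions for an occurrence of $\underline{23}1$, $3\underline{12}$, or $3\underline{14}2$ in $(w,[a,b])$. If none is found, $w$ is BP at $A_t$ and we output $A_t$. Otherwise, if $i_1<\cdots<i_k$ are the positions of a witnessing occurrence, set $A_{t+1}:=A_t\cup\{i_1,i_1+1,\dots,i_k-1\}$. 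That this again lies in $\cl_w(j)$ follows from \Cref{lem:step-by-step-closure-construction} (which gives $\{i_1,\dots,i_k-1\}\subseteq\cl_w(A_t)$) together with monotonicity and idempotence of $\cl_w$ from \Cref{lem:closure-basic}(1)--(2) and the inductive invariant. The position constraints encoded in the three patterns guarantee that $A_{t+1}$ is again a connected interval properly containing $A_t$ (for $\underline{23}1$ the value-$1$ position lies beyond $b$; for the other two the value-$3$ position lies before $a$, so in each case a new generator is added), so the loop halts after at most $n-1$ rounds at a set $A_T\in\BP(w)$ with $j\in A_T$; minimality in \Cref{def:closure} then forces $\cl_w(j)\subseteq A_T$, whence $A_T=\cl_w(j)$.

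Performing this for all $j$ costs $O(n)$ closures, each of at most $O(n)$ rounds, each round $O(n^4)$, for a polynomial total. I do not expect a genuine obstacle here, since \Cref{thm:intro-bp-pattern-characterization}, \Cref{thm:intro-union-intersection}, and the type-$A$ lemmas of this section already carry the substantive content; the one point needing care is the correctness argument for the loop — the sandwich $A_t\subseteq\cl_w(j)\subseteq A_T$ and the strict-growth bound showing termination — which is exactly what is sketched above.
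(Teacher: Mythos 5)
Your proposal is correct and follows essentially the same route as the paper: reduce to computing each $\cl_w(j)$ via \Cref{lem:closure-basic}(3), then grow the interval by repeatedly finding a bad pattern from \Cref{cor:BP-patterns-typeA} and enlarging via \Cref{lem:step-by-step-closure-construction}, with the same $O(n^6)$ bound. Your extra care in taking the union $A_t\cup[i_1,i_k-1]$ and verifying strict growth and connectedness is a slight tightening of the paper's sketch, not a different argument.
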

\begin{proof}
In light of \Cref{lem:closure-basic}(3), it suffices to compute $\cl_w(i)$ for $i=1,\ldots,n-1$. To compute $\cl_w(A)$ for connected $A=\{a,\ldots,b-1\}$, we first check if any bad pattern (cf. \Cref{cor:BP-patterns-typeA}) is contained in $(w,[a,b])$. This step takes $O(n^4)$. If no bad pattern appears, $\cl_w(A)=A$. If there is a bad pattern at indices $i_1<\cdots<i_k$, by \Cref{lem:step-by-step-closure-construction}, $\cl_w(A)\supseteq[i_1,i_k-1]\supsetneq A$ so $\cl_w(A)=\cl_w([i_1,i_k-1])$. Continue this procedure on $[i_1,i_k-1]$ which is strictly larger than $A$. At most $O(n)$ steps are needed so computing $\cl_w(i)$ needs $O(n^5)$ and the total time complexity is $O(n^6)$.
\end{proof}

\subsection{BP posets for groups of rank three}
\label{sec:rank-three}

\begin{theorem}
\label{thm:rank-three-union-intersection}
    Let $W$ be a Coxeter group of rank three and $w \in W$. If $J,K \in \BP(w)$, then $J \cap K, J \cup K \in \BP(w)$.
\end{theorem}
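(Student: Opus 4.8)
The plan is to prove \Cref{thm:rank-three-union-intersection} by reducing to a finite verification, exploiting the classification of rank-three Coxeter groups. Since $W$ has rank three, $S = \{s_1, s_2, s_3\}$ and the only proper nonempty subsets of $S$ are the three singletons and the three two-element subsets. So the statement to be proved is really about a small finite collection of pairs $(J,K)$: if $J$ and $K$ are both singletons, then $J \cup K$ has size two and $J \cap K = \emptyset$, and we must show $\emptyset, J \cup K \in \BP(w)$; if one of $J,K$ is a singleton contained in the other (a two-element set), the conclusion is trivial; if $J,K$ are distinct two-element sets, then $J \cup K = S$ and $J \cap K$ is a singleton, and we must show $S, J\cap K \in \BP(w)$. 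The cases involving $\emptyset$ and $S$ are automatic, since $\emptyset, S \in \BP(w)$ always (the parabolic decomposition is trivial). So the only real content is: (a) if $\{s_i\}, \{s_j\} \in \BP(w)$ with $i \neq j$, then $\{s_i, s_j\} \in \BP(w)$; and (b) if $\{s_i,s_j\}, \{s_i,s_k\} \in \BP(w)$, then $\{s_i\} \in \BP(w)$.

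For part (b), I would try to give a direct Coxeter-theoretic argument from \Cref{def:BP} that does not need case analysis. Here $J_1 = \{s_i,s_j\}$, $J_2 = \{s_i,s_k\}$, $J = \{s_i\} = J_1 \cap J_2$. The goal is to show $\Supp(w^J) \cap J \subset D_L(w_J)$, i.e. either $s_i \notin \Supp(w^{\{s_i\}})$ or $s_i \in D_L(w_{\{s_i\}})$; equivalently (writing $w = w^{\{s_i\}} w_{\{s_i\}}$) if $w_{\{s_i\}} \neq e$ then that alone forces $s_i \in D_L(w_{\{s_i\}})$, so the real content is the case $w_{\{s_i\}} = e$, i.e. $s_i \notin D_R(w)$, and then we must show $s_i \notin \Supp(w)$. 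One can combine the hypotheses that $w$ is BP at $J_1$ and at $J_2$: each gives an inequality comparing $\mathcal{P}(w)$ with a product over a bigger parabolic (via \Cref{prop:bp-iff-rgf}), and \Cref{prop:BP-intersection} — whose proof is written for finite Weyl groups using $J$-stars — needs to be re-proven in the rank-three generality. I expect the cleanest route is to not invoke \Cref{prop:BP-intersection} but instead to observe that the proof of \Cref{prop:BP-intersection} can be adapted, or, failing a uniform argument, to fall back on the classification: the irreducible rank-three Coxeter groups are $A_1^3$ (and reducible variants), $A_1 \times A_1 \times A_1$ trivially, $A_3$, $B_3$, $H_3$, $I_2(m) \times A_1$, and the various affine and hyperbolic triangle groups $(p,q,r)$ with $\tfrac1p+\tfrac1q+\tfrac1r \leq 1$. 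For the finite ones, \Cref{thm:intro-union-intersection} already applies. For the infinite (affine/hyperbolic) triangle groups, the parabolic subgroups $W_{J}$ for $J$ of size two are the dihedral groups $I_2(m)$, which are small and explicit, and one can check the BP condition by hand or by a short computation enumerating the relevant $w$ (up to the length at which $\Supp$ and $D_L$ stabilize).

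More concretely, the approach I would carry out is: (1) observe the reduction above so that only claims (a) and (b) for singleton/doubleton pairs in a rank-three group remain; (2) note the finite-type cases follow from \Cref{thm:intro-union-intersection}; (3) for the remaining cases $W$ is an (affine or hyperbolic) triangle group with Coxeter matrix entries $m_{12}, m_{13}, m_{23} \in \{2,3,\dots,\infty\}$, and here every rank-two parabolic $W_J$ is dihedral; (4) prove (a) and (b) by the following observation: the BP condition $\Supp(w^J)\cap J \subset D_L(w_J)$ only depends on $w$ through the pair $(\Supp(w), D_R(w))$ together with the combinatorics of how $w$ sits over $W_J$, and in a triangle group one can control $\Supp(w^J)$ explicitly — in particular, if $s \in \Supp(w)$ and $s \in J$, then $s \in \Supp(w_J)$ or $s$ appears in $w^J$ in a constrained way dictated by the tree-like (triangle) diagram. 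I would then verify (a) and (b) by this explicit description; the dihedral-quotient structure makes each check short. The main obstacle is that \Cref{prop:BP-union}, the union direction, is the harder half even in finite type (its proof in the excerpt rests on \Cref{lem:union-helper-decomposition}, which is proven only for crystallographic root systems via a reduction plus computer check). So for the union direction (a) in the non-crystallographic and infinite triangle-group cases, I expect to need an ad hoc argument: since $J = \{s_i,s_j\}$ with $W_J = I_2(m_{ij})$ dihedral, $w_J$ is an element of a dihedral group and $D_L(w_J)$ is easy to read off, and one shows that BP-ness at $\{s_i\}$ and at $\{s_j\}$ forces enough control on the two ways $w$ factors over $W_{\{s_i\}}$ and $W_{\{s_j\}}$ to conclude BP-ness at $J$; isolating and verifying this implication — essentially a dihedral-group calculation parametrized by which of $s_i,s_j,s_k$ lie in $\Supp(w)$ and $D_R(w)$ — is where the real work lies, and I would likely delegate the bookkeeping to a short computer enumeration over $w$ of bounded length in each of the finitely many relevant diagram shapes.
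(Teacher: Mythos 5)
Your reduction to the two nontrivial claims --- (a) the union of two singletons and (b) the intersection of two doubletons --- matches the paper's, but your execution diverges substantially and has real gaps. The paper's proof is short and completely type-uniform: its engine is \Cref{prop:singleton-classification}, valid in \emph{any} Coxeter group, which says $\{s\}\in\BP(w)$ if and only if $s\in D_R(w)$ or $s\notin\Supp(w)$. With this, case (a) is a three-line case analysis on which of $r,s$ lie in $D_R(w)$ versus outside $\Supp(w)$ (e.g.\ if both are right descents then $W_{\{r,s\}}$ is finite and $w_{\{r,s\}}=w_0(\{r,s\})$, whose left descent set is all of $\{r,s\}$), and case (b) reduces to showing $s\in D_R(w)$ when $s\in\Supp(w)$, which follows immediately from the two BP hypotheses unless $w$ has full support. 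No classification of rank-three groups, no dihedral bookkeeping, and no computer checks enter. You gesture at the right idea when you say the singleton BP condition ``only depends on $(\Supp(w),D_R(w))$,'' but you never isolate or prove that statement, and you treat (a) as the hard case when it is the easy one.

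The concrete gaps: (i) your fallback ``the finite-type cases follow from \Cref{thm:intro-union-intersection}'' fails for $H_3$ and $I_2(m)\times A_1$ --- that theorem is stated and proved only for finite \emph{Weyl} groups, via crystallographic root systems and $J$-stars, and $I_2(m)\times A_1$ is moreover an infinite family. (ii) Your plan for the infinite groups --- ``a short computer enumeration over $w$ of bounded length in each of the finitely many relevant diagram shapes'' --- is not a proof: there are infinitely many rank-three Coxeter diagrams (the labels range over $\{2,3,\dots,\infty\}$), each infinite group contains infinitely many elements, and you never establish the stabilization result that would make a bounded-length check sufficient. (iii) The only genuinely delicate situation is case (b) with $\Supp(w)=S$, where one must rule out $s\notin D_R(w)$ using BP-ness at both $\{r,s\}$ and $\{s,t\}$; your proposal offers nothing concrete there beyond ``combine the hypotheses.'' As it stands the proposal is a plan with the correct skeleton but without the key lemma that makes the argument go through, and its proposed substitutes do not close the infinite cases.
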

\begin{proof}
Let $S=\{r,s,t\}$. We always have $\emptyset, S \in \BP(w)$.

Suppose first that $J = \{r\} \in \BP(w)$. The only nontrivial case is to check that $\{r,s\} \in \BP(w)$ if $K=\{s\} \in \BP(w)$. By \Cref{prop:singleton-classification}, each of $r,s$ is either a right descent of $w$ or not in its support. If $s,r \in D_R(w)$, then $W_{\{r,s\}}$ is finite and $w_{J \cup K}=w_0(\{r,s\})$ has left descent set $\{r,s\}$, so $\{r,s\} \in \BP(w)$. If instead $r \in D_R(w)$ and $s \not \in \Supp(w)$, then $w_{J \cup K}=r$ and $\Supp(w^{J \cup K}) \cap (J \cup K) \subseteq J = D_L(w_{J \cup K})$. Reversing the roles of $r,s$ works similarly. Finally, if $r,s \not \in \Supp(w)$, then $\Supp(w^{J \cup K}) \cap (J \cup K)=\emptyset =D_L(e)$.

Now suppose that $J=\{r,s\}$. The only nontrivial case is to check that $\{s\} \in \BP(w)$ if $K=\{s,t\}$ is. Assume that $s \in \Supp(w)$, otherwise we are done. If either $r$ or $t$ is not in $\Supp(w)$ then, since $J, K \in \BP(w)$ we have $s \in D_R(w)$, so we are done by \Cref{prop:singleton-classification}. Thus we may assume that $w$ has full support, and assume for the sake of contradiction that $s \not \in D_R(w)$. In this case, by the BP condition, we have
\begin{align*}
    w_J&=\cdots rsr \\
    w_K&=\cdots tst,
\end{align*}
so $D_R(w)=\{r,t\}$ and $w_{\{r,t\}}=w_0(\{r,t\})$. Furthermore, $s$ must not commute with $r$ or $t$.

Suppose that $D_L(w_J)=\{s\}$, the case $D_L(w_J)=\{r\}$ being similar. Then $w^J=\cdots tst$. Two generators $s',s'' \in S$ with $m(s',s'')\geq 3$ are said to be \emph{interlaced} in $w$ if $s's''s'$ or $s''s's''$ appears as a (not necessarily consecutive) substring in some reduced word for $w$; as this property is invariant under braid moves, in fact it does not depend on the chosen reduced word. Inspecting $w^Jw_J$, we see that the only possible interlaced pairs are $s,r$ and $s,t$. But $w^{\{r,t\}}w_{\{r,t\}}$ would have $r,t$ interlaced if $m(r,t) \geq 3$, so $r,t$ must commute. Then we see that $D_L(w_K)=\{s\}$ (otherwise $w^K=r$ and no Coxeter moves can transform $w^Kw_K$ to $w^Jw_J$, violating Matsumoto's Theorem). Thus $w^K=\cdots rsr$ and $w_K=st \cdots st$. These reduced words for $w^Kw_K$ and $w^Jw_J$ must be connected by braid moves. No move can apply in $w_J$ or $w_K$, since $s \not \in D_R(w)$. Thus we must have $w^J \cdot s=w_0(\{s,t\})$ and $w^K \cdot s=w_0(\{r,s\})$ so that moves apply for these subwords. But this implies that $S=D_L(w)$, so $W$ is finite and $w=w_0$, contradicting the fact that $s \not \in D_R(w)$.
\end{proof}

\section{Poincar\'{e} dual Schubert classes}
\label{sec:poincare-dual}

In this section we prove \Cref{prop:intro-linear-extension}, \Cref{thm:intro-structure-constant-matrix}, and \Cref{cor:intro-bijection}.

\begin{prop}
\label{prop:singleton-classification}
Let $W$ be any Coxeter group and $w \in W$. Let $s \in S$. Then $\{s\} \in \BP(w)$ if and only if $s \in D_R(w)$ or $s \not \in \Supp(w)$.
\end{prop}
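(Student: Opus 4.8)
The plan is to prove \Cref{prop:singleton-classification} directly from the Coxeter-theoretic definition of a BP decomposition (\Cref{def:BP}), exploiting the fact that for a single simple reflection $s$, the parabolic subgroup $W_{\{s\}}=\{e,s\}$ is as small as possible, so $w_{\{s\}}$ is either $e$ or $s$.

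First I would set $J=\{s\}$ and write the parabolic decomposition $w=w^{\{s\}}w_{\{s\}}$. The point is that $w_{\{s\}}\in\{e,s\}$, and $w_{\{s\}}=s$ precisely when $s\in D_R(w)$ (by the standard characterization of minimal coset representatives: $w^{\{s\}}$ is obtained from $w$ by deleting a rightmost $s$ if one is available). Then I would split into three cases according to \Cref{def:BP}, which requires $\Supp(w^{\{s\}})\cap\{s\}\subseteq D_L(w_{\{s\}})$. If $s\in D_R(w)$, then $w_{\{s\}}=s$ and $D_L(s)=\{s\}$, so the containment $\Supp(w^{\{s\}})\cap\{s\}\subseteq\{s\}$ holds trivially; hence $\{s\}\in\BP(w)$. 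If $s\notin\Supp(w)$, then since $\Supp(w^{\{s\}})\subseteq\Supp(w)$ we get $\Supp(w^{\{s\}})\cap\{s\}=\emptyset\subseteq D_L(e)=\emptyset$, so again $\{s\}\in\BP(w)$. For the converse, suppose $s\notin D_R(w)$ but $s\in\Supp(w)$; then $w_{\{s\}}=e$, so the BP condition demands $\Supp(w^{\{s\}})\cap\{s\}\subseteq D_L(e)=\emptyset$, i.e. $s\notin\Supp(w^{\{s\}})$. But when $s\notin D_R(w)$ we have $w^{\{s\}}=w$, so $\Supp(w^{\{s\}})=\Supp(w)\ni s$, a contradiction. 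Thus $\{s\}\notin\BP(w)$, completing the equivalence.

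The only step requiring a little care is the identification of $w^{\{s\}}$ and $w_{\{s\}}$: namely that $w_{\{s\}}=s$ iff $s\in D_R(w)$, and that when $s\notin D_R(w)$ one has $w^{\{s\}}=w$. These are immediate from the length-additivity $\ell(w)=\ell(w^{\{s\}})+\ell(w_{\{s\}})$ together with $\ell(w_{\{s\}})\le 1$ and the fact that $w^{\{s\}}s>w^{\{s\}}$ in Bruhat order (equivalently $s\notin D_R(w^{\{s\}})$, which holds for all minimal coset representatives since $I(w^J)\cap\Phi_J^+=\emptyset$). Alternatively one can cite \cite{bjorner-brenti}. I expect this to be entirely routine; there is no real obstacle, as the proposition is essentially unwinding the definition in the smallest possible case. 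An even slicker route, if preferred, is via \Cref{prop:bp-iff-rgf}: $\{s\}\in\BP(w)$ iff $\mathcal{P}(w)=\mathcal{P}^{\{s\}}(w^{\{s\}})\mathcal{P}(w_{\{s\}})$, and since $\mathcal{P}(w_{\{s\}})$ is $1$ or $1+q$ one checks the two cases directly against the recursion $\mathcal{P}(w)=\mathcal{P}(w^{\{s\}})(1+q)$ when $s\in D_R(w)$.
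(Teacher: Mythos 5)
Your argument is correct and is essentially identical to the paper's own proof: both unwind \Cref{def:BP} for $J=\{s\}$ in the same three cases ($s\in D_R(w)$, $s\notin\Supp(w)$, and the converse where $w_{\{s\}}=e$ yet $s\in\Supp(w^{\{s\}})=\Supp(w)$). The extra care you take in justifying $w_{\{s\}}=s$ iff $s\in D_R(w)$ is fine but routine, as you note.
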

\begin{proof}
Let $J=\{s\}$. If $s \in D_R(w)$, then $w_J=s$, so $\Supp(w^J) \cap J \subseteq J \subseteq D_L(w_J)=J$. If $s \not \in \Supp(w)$, then $\Supp(w^J) \cap J \subseteq \Supp(w) \cap J = \emptyset \subseteq D_L(w_J)$. Conversely, if $s \in \Supp(w) \setminus D_R(w)$, then $w^J=w$ has $s$ in its support, but $D_L(w_J)=D_L(e)=\emptyset$. 
\end{proof}

We write $\BP_J(w_J)$ for the set of BP decompositions of $w_J$ \emph{viewed as an element of} $W_J$ (this omits the trivial BP decompositions induced by $K \subset S \setminus J$ which would be included in $\BP(w_J)$ by \Cref{prop:singleton-classification} and \Cref{thm:intro-union-intersection}). We make the analogous convention for $\bp_J(w_J)$.

\begin{prop}
\label{prop:bp-of-wJ-is-order-ideal}
Let $W$ be a finite Weyl group, $w \in W$, and $J \in \BP(w)$. Then $\bp_J(w_J)$ is the order ideal in $\bp(w)$ consisting of those blocks which are contained in $J$.
\end{prop}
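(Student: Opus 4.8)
The plan is to show the two sets of subsets of $J$ coincide: those $K \subseteq J$ with $K \in \BP(w)$, and those $K \subseteq J$ with $K \in \BP_J(w_J)$; once this is done, the statement about $\bp_J(w_J)$ being an order ideal of $\bp(w)$ follows from the fundamental theorem of finite distributive lattices together with \Cref{thm:intro-union-intersection}, since $\{K \in \BP(w) : K \subseteq J\}$ is exactly the principal order ideal in $\BP(w) = \mathcal{L}(\tbp(w))$ generated by $J$, whose poset of join-irreducibles is the down-set of the corresponding blocks in $\bp(w)$. So the real content is the identification $\{K \subseteq J : K \in \BP(w)\} = \BP_J(w_J)$.

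First I would set up notation: fix $K \subseteq J$ and compare the defining inequality $\Supp(w^K) \cap K \subseteq D_L(w_K)$ computed for $w \in W$ versus for $w_J \in W_J$. The key observation is that since $J \in \BP(w)$, we have $w = w^J w_J$ length-additively, and I claim the parabolic decomposition of $w$ with respect to $K$ refines compatibly: because $K \subseteq J$, one has $(w_J)^K = w^K$ ... no — rather, I expect $w_K = (w_J)_K$ and the relevant statement is that $\Supp(w^K) \cap K = \Supp((w_J)^K) \cap K$. Concretely, write $w_J = (w_J)^K (w_J)_K$ inside $W_J$; then $w = w^J (w_J)^K (w_J)_K$, and I must check this is the $K$-parabolic decomposition of $w$, i.e. that $w^J (w_J)^K \in W^K$ and it is length-additive. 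Length-additivity follows from length-additivity of $w = w^J w_J$ and of $w_J = (w_J)^K (w_J)_K$. That $w^J(w_J)^K \in W^K$: since $(w_J)^K \in (W_J)^K$ it has no inversions in $\Phi_K^+$, and $w^J \in W^J$ has no inversions in $\Phi_J^+ \supseteq \Phi_K^+$; using that inversions compose as $I(xy) = I(y) \sqcup y^{-1} I(x)$ (for length-additive products) and that $(w_J)^{-1}$ preserves $\Phi_J^+$, one gets $I(w^J(w_J)^K) \cap \Phi_K^+ = \emptyset$. Hence $w_K = (w_J)_K$, so $D_L(w_K) = D_L((w_J)_K)$, and $w^K = w^J (w_J)^K$, so $\Supp(w^K) \cap K = \Supp(w^J(w_J)^K) \cap K$. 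Finally, since $w^J \in W^J$ contributes no simple reflections in $J$ to the support relevant to $K$ — more precisely $\Supp(w^J) \cap J = \emptyset$ because $w^J \in W^J$ has no left descent in $J$... actually this needs the sharper fact that $\Supp(w^J) \cap J$ can be nonempty in general, but the BP condition at $J$ controls it — I would instead argue directly at the level of inversion sets: $\Supp(w^K) \cap K$ consists of those $\alpha_i \in \Delta_K$ below some root of $I(w^K)$, and since $I(w^K) = I((w_J)^K) \sqcup ((w_J)^K)^{-1} I(w^J)$ with the second summand having no roots supported (below them) on $\Delta_K$... this is the step I would nail down carefully. The cleanest route is probably to use \Cref{prop:bp-iff-rgf} instead: $K \in \BP(w)$ iff $\mathcal{P}(w) = \mathcal{P}^K(w^K)\mathcal{P}(w_K)$, and since $J \in \BP(w)$ gives $\mathcal{P}(w) = \mathcal{P}^J(w^J)\mathcal{P}(w_J)$, one reduces to showing $\mathcal{P}(w_J) = \mathcal{P}^K((w_J)^K)\mathcal{P}((w_J)_K)$ iff $\mathcal{P}(w) = \mathcal{P}^K(w^K)\mathcal{P}(w_K)$, which follows once $\mathcal{P}^K(w^K) = \mathcal{P}^J(w^J)\mathcal{P}^K((w_J)^K)$ — and this last identity is itself the statement that $w^J(w_J)^K$ is a length-additive, "$K$-BP-compatible" factorization, provable by a bijection on the relevant Bruhat intervals.

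The main obstacle I anticipate is precisely verifying $\mathcal{P}^K(w^K) = \mathcal{P}^J(w^J)\,\mathcal{P}^K((w_J)^K)$ — i.e. that the Bruhat interval $[e,w^K]^K$ fibers over $[e,w^J]^J$ with fibers $[e,(w_J)^K]^K$. This should follow from the general theory of BP decompositions applied within an appropriate parabolic quotient, but it requires care because $w^J$ and $(w_J)^K$ live in different "coordinate directions" and one must check the factorization $w^K = w^J \cdot (w_J)^K$ is genuinely a BP-type decomposition in $W^K$ (equivalently, verify the $\mathcal{P}$-multiplicativity directly via \Cref{prop:bp-iff-rgf} generalized to the parabolic setting). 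Once that multiplicativity is in hand, the equivalence $K \in \BP(w) \iff K \in \BP_J(w_J)$ drops out by dividing rank generating functions, and the order-ideal claim is then immediate from the fundamental theorem of finite distributive lattices.
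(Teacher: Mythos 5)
Your overall strategy is the right one and matches the paper's: reduce the statement to the identification $\BP_J(w_J)=\{K\subseteq J: K\in\BP(w)\}$ and then invoke \Cref{thm:intro-union-intersection} together with the fundamental theorem of finite distributive lattices. Your first attempted argument, however, contains a false step that you correctly sense and abandon: it is \emph{not} true that $\Supp(w^K)\cap K=\Supp((w_J)^K)\cap K$, nor that $((w_J)^K)^{-1}I(w^J)$ contributes no roots supported on $\Delta_K$. Since $w^K=w^J\cdot(w_J)^K$ length-additively, one has $\Supp(w^K)\cap K=(\Supp(w^J)\cap K)\cup(\Supp((w_J)^K)\cap K)$, and $\Supp(w^J)\cap K$ can be nonempty (the BP condition at $J$ only forces it into $D_L(w_J)$, not into $D_L(w_K)$). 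This asymmetry is exactly why one direction is easy (the BP condition for $w$ at $K$ is simply stronger than the condition for $w_J$ at $K$, which is how the paper handles it, after first intersecting with $J$ via \Cref{thm:intro-union-intersection}) while the other direction has real content. The paper disposes of the hard direction by citing the composition property of BP decompositions from Richmond--Slofstra.

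Your fallback route through \Cref{prop:bp-iff-rgf} is genuinely different from the paper's and does work, but as written it has a gap at precisely the point you flag: the identity $\mathcal{P}^K(w^K)=\mathcal{P}^J(w^J)\,\mathcal{P}^K_{W_J}((w_J)^K)$ is asserted, not proved. It is in fact true whenever $J\in\BP(w)$ and $K\subseteq J$, and here is how to close it: the projection $v\mapsto v^K$ maps $[e,w]$ onto $[e,w^K]^K$ (Deodhar's lemma plus the trivial inclusion $[e,w^K]^K\subseteq[e,w]$), and under the length-additive bijection $[e,w^J]^J\times[e,w_J]\to[e,w]$ of \Cref{prop:BP-gives-subposet-product} one checks $(xy)^K=x\cdot y^K$ for $x\in W^J$, $y\in W_J$ (using $K\subseteq J$, so $x\,y^K\in W^K$). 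Hence $[e,w^K]^K=\{xz: x\in[e,w^J]^J,\ z\in[e,(w_J)^K]^K_{W_J}\}$ with all products distinct and length-additive, which is the desired identity; the equivalence $K\in\BP(w)\iff K\in\BP_J(w_J)$ then follows by cancelling $\mathcal{P}^J(w^J)$ as you say. With that lemma supplied your argument is complete and is arguably more self-contained than the paper's, which outsources the hard direction to an external proposition; without it, the proposal stops exactly where the work begins.
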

\begin{proof}
We need to show that $\BP_J(w_J)=\{J \cap K \mid K \in \BP(w)\}$. Suppose $K \in \BP(w)$, then by \Cref{thm:intro-union-intersection} we have $J \cap K \in \BP(w)$. Then $J \cap K \in \BP(w_J)$ since $(w_J)_{J \cap K}=w_{J \cap K}$ and $(w_J)^{J \cap K} \leq w^{J \cap K}$. On the other hand, if $J \supset K \in \BP(w_J)$, then by \cite[Prop.~4.3]{richmond-slofstra-fiber-bundle} we have $K \in \BP(w)$, so $K$ is the intersection with $J$ of $K \in \BP(w)$.
\end{proof}

The following result was proven by Richmond and Slofstra in the case of crystallographic $W$ \cite[Thm.~3.3]{richmond-slofstra-fiber-bundle}.

\begin{prop}
\label{prop:wJ-rs}
Let $W$ be any Coxeter group and suppose $w \in W$ is rationally smooth. If $J \subset S$, then $w_J$ is rationally smooth. Furthermore, if $J \in \BP(w)$ then $w^J$ is $J$-rationally smooth.
\end{prop}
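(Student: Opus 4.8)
The plan is to prove both statements using the key characterization of BP decompositions from \Cref{prop:bp-iff-rgf}, namely that $J \in \BP(w)$ if and only if $\mathcal{P}(w) = \mathcal{P}^J(w^J)\mathcal{P}(w_J)$, together with the characterization (RS2) of rational smoothness from \Cref{thm:rs-equiv-defs}: $w$ is rationally smooth exactly when $\mathcal{P}(w)$ is palindromic (of degree $\ell(w)$), and $w^J$ is $J$-rationally smooth exactly when $\mathcal{P}^J(w^J)$ is palindromic.

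First I would handle the statement that $w_J$ is rationally smooth for \emph{any} $J \subset S$ (not just $J \in \BP(w)$). The standard approach is to use parabolic Kazhdan--Lusztig polynomials: there is a known identity relating $P^{\emptyset}_{e,w}$ and $P^J_{e,w^J}$ and $P_{e,w_J}$ computed inside $W_J$ — more precisely, rational smoothness of $w$ forces rational smoothness of $u$ for every $u \leq w$ in Bruhat order (since $P_{e,u} = 1$ whenever $P_{e,w}=1$, by the standard monotonicity of KL polynomials under Bruhat order, e.g. $P_{e,u} \le P_{e,w}$ coefficientwise), and $w_J \leq w$. Indeed $w_J \leq w$ since $w = w^J w_J$ is length-additive. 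So this half follows immediately from the fact that Kazhdan--Lusztig polynomials $P_{e,u}$ for $u \leq w$ are dominated by $P_{e,w}$; since $P_{e,w}=1$ we get $P_{e,w_J}=1$, viewing $w_J$ as an element of $W_J$ (KL polynomials for a parabolic subgroup agree with those of the ambient group, by a theorem of Deodhar).

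For the second part, suppose $J \in \BP(w)$, so $\mathcal{P}(w) = \mathcal{P}^J(w^J)\,\mathcal{P}(w_J)$ by \Cref{prop:bp-iff-rgf}. Since $w$ is rationally smooth, $\mathcal{P}(w)$ is palindromic of degree $\ell(w)$; by the first part $w_J$ is rationally smooth, so $\mathcal{P}(w_J)$ is palindromic of degree $\ell(w_J)$. Now $\mathcal{P}^J(w^J)$ has degree $\ell(w^J)$ and constant term $1$, and $\ell(w) = \ell(w^J) + \ell(w_J)$. A product of polynomials with nonnegative integer coefficients and constant term $1$ is palindromic of degree $d_1 + d_2$ if and only if both factors are palindromic of their respective degrees — this is an elementary fact, provable by comparing lowest and highest terms and inducting, or by noting that in $\mathbb{Z}[q]$ the palindromic polynomials with constant term $1$ form a multiplicatively closed set and one can divide. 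Applying this: since $\mathcal{P}(w)$ and $\mathcal{P}(w_J)$ are palindromic and $\mathcal{P}(w_J) \mid \mathcal{P}(w)$, the quotient $\mathcal{P}^J(w^J)$ must be palindromic of degree $\ell(w^J)$, which by (RS2) says exactly that $w^J$ is $J$-rationally smooth.

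The main obstacle is the elementary lemma about palindromic polynomials: that if $f$ and $g=fh$ are both palindromic with nonnegative coefficients and nonzero constant terms, then $h$ is palindromic. The cleanest route is to work in the ring $\mathbb{Q}[q,q^{-1}]$ and use the involution $q \mapsto q^{-1}$: palindromicity of degree $d$ for a polynomial $p$ with $p(0) \neq 0$ means $q^d p(q^{-1}) = p(q)$; if $g(q) = f(q)h(q)$ with $\deg g = \deg f + \deg h$ (which holds here since everything has nonnegative coefficients so no cancellation of leading terms), then $q^{\deg g} g(q^{-1}) = (q^{\deg f} f(q^{-1}))(q^{\deg h} h(q^{-1}))$, and palindromicity of $g$ and $f$ forces $q^{\deg h}h(q^{-1}) = h(q)$ after dividing by $f(q)$ in the integral domain $\mathbb{Q}[q]$. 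I would cite \cite{bjorner-ekedahl} or \cite{richmond-slofstra-fiber-bundle} for this standard fact rather than belaboring it, and I expect the whole proof to be just a few lines.
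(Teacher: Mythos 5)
The second half of your argument --- deducing $J$-rational smoothness of $w^J$ from the factorization $\mathcal{P}(w)=\mathcal{P}^J(w^J)\mathcal{P}(w_J)$, palindromicity of the two known factors, and the elementary lemma on quotients of palindromic polynomials --- is correct and is exactly what the paper does. The gap is in the first half. The monotonicity you invoke, $P_{e,u}\leq P_{e,w}$ coefficientwise for $u\leq w$, is not the standard monotonicity theorem and is in fact false. The actual theorem (Irving; Braden--MacPherson; Elias--Williamson) fixes the \emph{upper} index: $P_{u,w}\leq P_{v,w}$ for $v\leq u\leq w$. There is no monotonicity in the upper index; equivalently, rational smoothness is \emph{not} inherited by Bruhat-smaller elements. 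Concretely, in $\mathfrak{S}_4$ the longest element $4321$ is smooth, but $3412\leq 4321$ has $P_{e,3412}=1+q$. So the assertion ``$P_{e,w}=1$ and $u\leq w$ imply $P_{e,u}=1$'' fails, and your deduction that $w_J$ is rationally smooth collapses. (Note also that this first statement is claimed for \emph{all} $J\subset S$, so it cannot be rescued by appealing to special features of BP decompositions.)

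The paper's route around this is instructive. The correct monotonicity gives $P_{w^J,w}\leq P_{e,w}=1$, so the \emph{upper} interval $[w^J,w]$ satisfies Deodhar's criterion for rational smoothness. The map $u\mapsto w^J u$ is a poset isomorphism $[e,w_J]\cong[w^J,w]$, and by Dyer's theorem such an isomorphism of Bruhat intervals induces an isomorphism of Bruhat graphs; since Deodhar's criterion is a statement about the Bruhat graph, it transfers to $[e,w_J]$, giving $P_{e,w_J}=1$. This transfer step is needed precisely because Kazhdan--Lusztig polynomials are not known to be invariants of the underlying poset alone, so one must pass through a Bruhat-graph-theoretic characterization of rational smoothness rather than simply ``moving the polynomial across the isomorphism.''
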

\begin{proof}
By monotonicity of Kazhdan--Lusztig polynomials \cite{elias-williamson,Slofstra-melvin}, we have $P_{w^J,w}(q) \leq P_{e,w}(q) = 1$, where $\leq$ denotes coefficientwise comparison of polynomials; thus $P_{w^J,w}(q)=1$. Thus the interval $[w^J,w]$ satisfies Deodhar's criterion for rational smoothness \cite{Dyer-nil-hecke}. Now, we have an isomorphism of posets $[e,w_J]\cong [w^J,w] $ given by $u \mapsto w^J u$, and therefore \cite{Dyer-bruhat-graph} an isomorphism of Bruhat graphs on these intervals. Thus $[e,w_J]$ also satisfies Deodhar's criterion, and we have $P_{e,w_J}(q)=1$, so $w_J$ is rationally smooth. 

Now suppose $J \in \BP(w)$. Then we have $\mathcal{P}(w)=\mathcal{P}^J(w^J)\mathcal{P}(w_J)$ by \Cref{prop:bp-iff-rgf}. We know $\mathcal{P}(w)$ is palindromic by \Cref{thm:rs-equiv-defs} and that $\mathcal{P}(w_J)$ is as well, since we have shown $w_J$ is rationally smooth. Thus $\mathcal{P}^J(w^J)$ is palindromic, so $w^J$ is $J$-rationally smooth by \Cref{thm:rs-equiv-defs}.
\end{proof}

\begin{theorem}[Richmond--Slofstra \cite{richmond-slofstra-fiber-bundle}, Thm.~3.6]
\label{thm:rs-implies-gr-bp}
Let $W$ be finite and let $w \in W$ be rationally smooth. Then $w$ has a BP decomposition induced by some maximal parabolic $J=S \setminus \{s\}$.
\end{theorem}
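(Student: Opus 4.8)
The plan is to argue by induction on $\ell(w)$, using the factorization of the palindromic Poincaré polynomial $\mathcal{P}(w)$ together with \Cref{prop:bp-iff-rgf}. If $w=e$ the statement is vacuous, so assume $\ell(w)>0$. Since $\mathcal{P}(w)=\sum_{v\le w}q^{\ell(v)}$ is palindromic of degree $\ell(w)$ and has the form $1+|D_R(w)|\,q+\cdots$, the coefficient of $q^{\ell(w)-1}$ equals $|D_R(w)|$; in particular $\mathcal{P}(w)$ is not a single $q$-integer unless $w$ is a product of commuting generators. The first step is to pick a maximal parabolic $J=S\setminus\{s\}$ with $s\in D_R(w)$ (this is possible as long as $w\ne e$; when $S$ has a single element the claim is immediate). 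The key point to establish is that for a \emph{suitable} choice of $s$, the parabolic decomposition $w=w^Jw_J$ satisfies $\mathcal{P}(w)=\mathcal{P}^J(w^J)\mathcal{P}(w_J)$, which by \Cref{prop:bp-iff-rgf} is exactly the BP condition.

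The mechanism for choosing $s$ is the following. Consider the ``second-highest'' structure of the palindromic polynomial: write $\mathcal{P}(w)=\prod$ of its irreducible-over-$\mathbb{Q}$ factors, or more concretely use that palindromicity forces a factorization compatible with maximal parabolic quotients. Concretely, I would invoke the theory of parabolic decompositions: for any $s\in D_R(w)$ with $J=S\setminus\{s\}$, one always has $\mathcal{P}(w_J)\mid$... no — rather, one uses that $w_J$ is rationally smooth by \Cref{prop:wJ-rs} (this requires $J\in\BP(w)$, so cannot be used circularly); instead the right tool is the general identity relating $\mathcal{P}(w)$ and $\mathcal{P}^J(w^J)$. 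The cleanest route: among all $s$, choose one so that $\deg\mathcal{P}(w_J)$ is as small as possible, equivalently $\ell(w_J)$ minimal; Richmond--Slofstra show (and this is the technical heart) that such a choice makes $\mathcal{P}^J(w^J)$ palindromic, and then a counting/degree argument combined with the fact that $\mathcal{P}^J(w^J)(1)\cdot\mathcal{P}(w_J)(1)=\mathcal{P}(w)(1)=|[e,w]|$ forces the product formula. The combinatorial input is Deodhar's inequality $\mathcal{P}(w)\le_{\text{coeff}}\mathcal{P}^J(w^J)\mathcal{P}(w_J)$, always valid, so one only needs the reverse inequality, which follows once both sides have equal value at $q=1$ and the left side is palindromic of the same degree.

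The main obstacle I anticipate is pinning down \emph{which} maximal $J$ works: it is not true that every $s\in D_R(w)$ yields a BP decomposition, so the minimality (or an analogous extremal) selection of $s$ must be justified, and proving that this extremal choice yields palindromicity of $\mathcal{P}^J(w^J)$ is the crux. I would handle this by showing that if no maximal $J$ gave a BP decomposition, then for every $s\in D_R(w)$ the polynomial $\mathcal{P}^J(w^J)$ fails to be palindromic in a controlled way (its top coefficient exceeds its would-be value), and summing these defects against the palindromicity of $\mathcal{P}(w)$ yields a contradiction — essentially an averaging argument over $s\in D_R(w)$ using that $\sum_{s\in D_R(w)}[\text{something}]$ is pinned down by the coefficient of $q^{\ell(w)-1}$ in $\mathcal{P}(w)$. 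Since this theorem is quoted from Richmond--Slofstra \cite{richmond-slofstra-fiber-bundle}, I would in practice simply cite their argument; but the sketch above is how I would reconstruct it from the tools available in this excerpt.
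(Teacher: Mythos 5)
The paper does not actually prove this statement; it is imported from Richmond--Slofstra, and the citation is the ``proof.'' Your reconstruction, however, has a gap that cannot be closed with the tools you invoke, and the paper itself shows why: every ingredient in your sketch --- palindromicity of $\mathcal{P}(w)$, the factorization criterion of \Cref{prop:bp-iff-rgf}, Deodhar's inequality, comparison of values at $q=1$ --- is available verbatim in an arbitrary Coxeter group, yet \Cref{thm:counterexample} exhibits a rationally smooth $w=srstrsr$ in type $\widetilde{C}_2$ with no Grassmannian BP decomposition. So finiteness of $W$ must enter essentially, and it never enters your argument. Concretely, the ``technical heart'' you defer --- that some extremal choice of $s$ (you propose minimizing $\ell(w_J)$) forces $\mathcal{P}(w)=\mathcal{P}^J(w^J)\mathcal{P}(w_J)$ --- is the entire content of the theorem and is asserted rather than proved; neither the minimality heuristic nor the averaging-over-descents idea is developed into a checkable step, and I do not see how to quantify the ``defect'' of a non-palindromic $\mathcal{P}^J(w^J)$ so that it can be summed against the coefficient of $q^{\ell(w)-1}$. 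The actual Richmond--Slofstra argument is a root-system argument (a numerical criterion for the BP condition in terms of reflections below $w$, combined with the Carrell--Peterson characterization of rational smoothness), not a generating-function manipulation.

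Two smaller but genuine errors. First, Deodhar's inequality goes the other way: the injective, length-additive multiplication map $[e,w^J]^J\times[e,w_J]\to[e,w]$ gives $\mathcal{P}^J(w^J)\mathcal{P}(w_J)\le_{\mathrm{coeff}}\mathcal{P}(w)$, so the missing half is an \emph{upper} bound on $\mathcal{P}(w)$, i.e.\ surjectivity of that map --- exactly what fails for a general $J$. Second, the coefficient of $q$ in $\mathcal{P}(w)$ is $|\Supp(w)|$, not $|D_R(w)|$ (already $w=s_1s_2\in\mathfrak{S}_3$ has $\mathcal{P}(w)=1+2q+q^2$ while $|D_R(w)|=1$), so by palindromicity the coefficient of $q^{\ell(w)-1}$ for rationally smooth $w$ is $|\Supp(w)|$; the quantity you propose to anchor the averaging argument is therefore misidentified. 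Citing \cite{richmond-slofstra-fiber-bundle} is the right call here; the sketch as written does not reconstruct the proof.
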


Combining \Cref{thm:rs-implies-gr-bp} and \Cref{prop:wJ-rs} one obtains a decomposition of a rationally smooth $X(w)$ in finite type as an iterated bundle of rationally smooth Grassmannian Schubert varieties. Using \Cref{prop:bp-of-wJ-is-order-ideal}, we see that such decompositions are indexed by linear extensions of $\bp(w)$.

\begin{proof}[Proof of \Cref{prop:intro-linear-extension}]
Let $W$ be a finite Weyl group and $w \in W$ be rationally smooth. By \Cref{thm:rs-implies-gr-bp}, we have $J=S \setminus \{s\} \in \BP(w)$ for some $s \in S$. By \Cref{thm:intro-union-intersection} and the construction of $\bp(w)$, this means that $\bp(w)$ has an order ideal the union of whose blocks is $S \setminus \{s\}$. This implies that $\bp(w)$ has $\{s\}$ as a maximal element. By \Cref{prop:wJ-rs}, $w_J$ is rationally smooth, and, by \Cref{prop:bp-of-wJ-is-order-ideal}, $\bp_J(w_J)$ equals the induced subposet of $\bp(w)$ obtained by deleting the element $\{s\}$. Repeating this, we see that all elements of $\bp(w)$ are singletons. Furthermore, linear extensions $\lambda: \bp(w) \to \{1,\ldots,|S|\}$ can be identified with sequences $\lambda^{-1}(\{1,\ldots,|S|-1\}), \ldots, \lambda^{-1}(\{1\})$ giving Grassmannian BP decompositions of the successive $w_J$'s.
\end{proof}

Although we have proven \Cref{prop:wJ-rs} for arbitrary Coxeter groups, we will see in \Cref{sec:infinite-type-counterexample} that \Cref{thm:rs-implies-gr-bp} fails in infinite type, so we obtain no such decomposition there.

\begin{ex}
Consider the smooth permutation $w=65178432$ with its BP poset shown in \Cref{fig:BP-poset-poincare-dual-example-w}. Choose a linear extension $a=(3,1,4,6,2,7,5)$, and write $J^{(i)}=J^{(i-1)}\setminus\{a_i\}$ with $J^{(0)}=S$ and $J^{(7)}=\emptyset$. We compute each $w^{(i)}=w_{J^{(i-1)}}^{J^{(i)}}$ to obtain $w^{(1)}=15623478$, $w^{(2)}=31245678$, $w^{(3)}=12374568$, $w^{(4)}=12347856$, $w^{(5)}=13245678$, $w^{(6)}=12345687$, $w^{(7)}=12346578$ so that $w=w^{(1)}w^{(2)}\cdots w^{(7)}.$ Visually, their corresponding rectangular Young's diagrams are shown in \Cref{fig:BP-poset-poincare-dual-example-w}. Here, for simplicity, at each node we draw the partition with respect to the connected component containing $a_i$ inside $J^{(i-1)}$. 
\begin{figure}[h!]
\centering
\begin{tikzpicture}[scale=0.8]
\def\r{0.3}
\def\b{0.4}
\coordinate (c3) at (0,0);
\coordinate (c4) at (4,-2);
\coordinate (c5) at (1,-3.5);
\coordinate (c6) at (4,-3.5);
\coordinate (c7) at (7,-3.5);
\coordinate (c1) at (-4,-2);
\coordinate (c2) at (-1,-2);
\draw(c3)--(c1);
\draw(c3)--(c2);
\draw(c3)--(c4)--(c5);
\draw(c6)--(c4)--(c7);
\filldraw[fill=white, draw=black] (c1) circle (\r) node {1};
\filldraw[fill=white, draw=black] (c2) circle (\r) node {2};
\filldraw[fill=white, draw=black] (c3) circle (\r) node {3};
\filldraw[fill=white, draw=black] (c4) circle (\r) node {4};
\filldraw[fill=white, draw=black] (c5) circle (\r) node {5};
\filldraw[fill=white, draw=black] (c6) circle (\r) node {6};
\filldraw[fill=white, draw=black] (c7) circle (\r) node {7};

\def\x3{1.0}
\def\y3{1.0}
\node at (\x3,\y3-3*\b) {$\cdot$};
\node at (\x3,\y3-2*\b) {$\cdot$};
\node at (\x3,\y3-1*\b) {$\cdot$};
\node at (\x3,\y3) {$\cdot$};
\node at (\x3+1*\b,\y3) {$\cdot$};
\node at (\x3+2*\b,\y3) {$\cdot$};
\node at (\x3+3*\b,\y3) {$\cdot$};
\node at (\x3+4*\b,\y3) {$\cdot$};
\node at (\x3+5*\b,\y3) {$\cdot$};
\draw(\x3,\y3-3*\b)--(\x3,\y3)--(\x3+5*\b,\y3);
\draw(\x3,\y3-2*\b)--(\x3+3*\b,\y3-2*\b)--(\x3+3*\b,\y3);
\filldraw[fill=black,opacity=0.2] (\x3,\y3-2*\b)--(\x3+3*\b,\y3-2*\b)--(\x3+3*\b,\y3)--(\x3,\y3);

\def\y1{-2.5}
\filldraw[fill=black,opacity=0.2] (-4-\b,\y1)--(-4+\b,\y1)--(-4+\b,\y1-\b)--(-4-\b,\y1-\b);
\draw (-4-\b,\y1)--(-4+\b,\y1)--(-4+\b,\y1-\b)--(-4-\b,\y1-\b)--(-4-\b,\y1);
\node at (-4-\b,\y1-\b) {$\cdot$};
\node at (-4-\b,\y1) {$\cdot$};
\node at (-4,\y1) {$\cdot$};
\node at (-4+\b,\y1) {$\cdot$};

\filldraw[fill=black,opacity=0.2] (-1-0.5*\b,\y1)--(-1+0.5*\b,\y1)--(-1+0.5*\b,\y1-\b)--(-1-0.5*\b,\y1-\b);
\draw(-1-0.5*\b,\y1)--(-1+0.5*\b,\y1)--(-1+0.5*\b,\y1-\b)--(-1-0.5*\b,\y1-\b)--(-1-0.5*\b,\y1);
\node at (-1-0.5*\b,\y1) {$\cdot$};
\node at (-1+0.5*\b,\y1) {$\cdot$};
\node at (-1-0.5*\b,\y1-\b) {$\cdot$};

\def\x4{4.5}
\def\y4{-1.5}
\filldraw[fill=black,opacity=0.2](\x4,\y4)--(\x4,\y4-\b)--(\x4+3*\b,\y4-\b)--(\x4+3*\b,\y4);
\draw(\x4+4*\b,\y4)--(\x4,\y4)--(\x4,\y4-\b)--(\x4+3*\b,\y4-\b)--(\x4+3*\b,\y4);
\node at (\x4,\y4) {$\cdot$};
\node at (\x4,\y4-\b) {$\cdot$};
\node at (\x4+\b,\y4) {$\cdot$};
\node at (\x4+2*\b,\y4) {$\cdot$};
\node at (\x4+3*\b,\y4) {$\cdot$};
\node at (\x4+4*\b,\y4) {$\cdot$};

\def\y5{-4}
\node at (1-0.5*\b,\y5) {$\cdot$};
\node at (1+0.5*\b,\y5) {$\cdot$};
\node at (1-0.5*\b,\y5-\b) {$\cdot$};
\filldraw[fill=black,opacity=0.2](1-0.5*\b,\y5)--(1+0.5*\b,\y5)--(1+0.5*\b,\y5-\b)--(1-0.5*\b,\y5-\b);
\draw(1-0.5*\b,\y5)--(1+0.5*\b,\y5)--(1+0.5*\b,\y5-\b)--(1-0.5*\b,\y5-\b)--(1-0.5*\b,\y5);

\node at (4-\b,\y5) {$\cdot$};
\node at (4,\y5) {$\cdot$};
\node at (4+\b,\y5) {$\cdot$};
\node at (4-\b,\y5-2*\b) {$\cdot$};
\node at (4-\b,\y5-\b) {$\cdot$};
\filldraw[fill=black,opacity=0.2](4-\b,\y5)--(4+\b,\y5)--(4+\b,\y5-2*\b)--(4-\b,\y5-2*\b);
\draw(4-\b,\y5)--(4+\b,\y5)--(4+\b,\y5-2*\b)--(4-\b,\y5-2*\b)--(4-\b,\y5);

\node at (7-0.5*\b,\y5) {$\cdot$};
\node at (7+0.5*\b,\y5) {$\cdot$};
\node at (7-0.5*\b,\y5-\b) {$\cdot$};
\filldraw[fill=black,opacity=0.2](7-0.5*\b,\y5)--(7+0.5*\b,\y5)--(7+0.5*\b,\y5-\b)--(7-0.5*\b,\y5-\b);
\draw(7-0.5*\b,\y5)--(7+0.5*\b,\y5)--(7+0.5*\b,\y5-\b)--(7-0.5*\b,\y5-\b)--(7-0.5*\b,\y5);
\end{tikzpicture}
\caption{The BP poset of a smooth $w=65178432$ in type $A_7$.}
\label{fig:BP-poset-poincare-dual-example-w}
\end{figure}
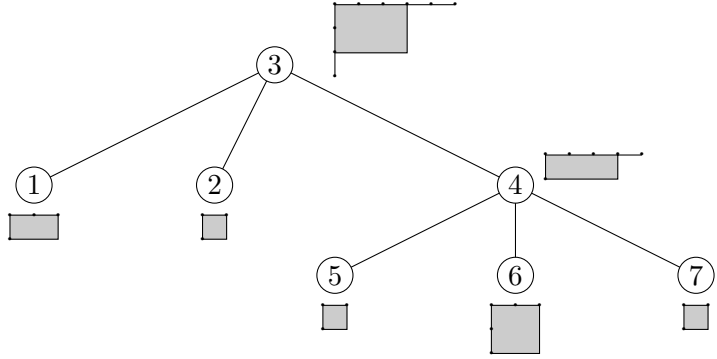

The bijection $\phi:[e,w]_k\rightarrow[e,w]_{\ell(w)-k}$ in \Cref{cor:intro-bijection} can now be constructed as follows. Given $u\in[e,w]_k$, write down $u^{(i)}=u_{J^{(i-1)}}^{J^{(i)}}$, which corresponds to a partition inside the rectangle $w^{(i)}$. Let $v^{(i)}=w_0(\Supp(w^{(i)}))\cdot u^{(i)}\cdot w_0(\Supp(w^{(i)})\setminus\{a_i\})$ be the dual shape. Then $\phi(u)=v^{(1)}v^{(2)}\cdots v^{(7)}$. An example of this duality with
\begin{align*}
u=&13624578\cdot 21345678\cdot 12364578\cdot 12345768\cdot s_2\cdot s_7\cdot e=36152784\\
v=&12534678\cdot 21345678\cdot12354678\cdot 12346857\cdot e \cdot e\cdot s_5=21548637
\end{align*}
is shown in \Cref{fig:BP-poset-poincare-dual-example-uv}.
\begin{figure}[h!]
\centering
\begin{tikzpicture}[scale=0.6]
\def\r{0.4}
\def\b{0.4}
\coordinate (c3) at (0,0);
\coordinate (c4) at (2,-2);
\coordinate (c5) at (0,-3.5);
\coordinate (c6) at (2,-3.5);
\coordinate (c7) at (4,-3.5);
\coordinate (c1) at (-3,-2);
\coordinate (c2) at (-1,-2);
\draw(c3)--(c1);
\draw(c3)--(c2);
\draw(c3)--(c4)--(c5);
\draw(c6)--(c4)--(c7);
\filldraw[fill=white, draw=black] (c1) circle (\r) node {1};
\filldraw[fill=white, draw=black] (c2) circle (\r) node {2};
\filldraw[fill=white, draw=black] (c3) circle (\r) node {3};
\filldraw[fill=white, draw=black] (c4) circle (\r) node {4};
\filldraw[fill=white, draw=black] (c5) circle (\r) node {5};
\filldraw[fill=white, draw=black] (c6) circle (\r) node {6};
\filldraw[fill=white, draw=black] (c7) circle (\r) node {7};

\def\x3{1.0}
\def\y3{1.0}
\node at (\x3,\y3-3*\b) {$\cdot$};
\node at (\x3,\y3-2*\b) {$\cdot$};
\node at (\x3,\y3-1*\b) {$\cdot$};
\node at (\x3,\y3) {$\cdot$};
\node at (\x3+1*\b,\y3) {$\cdot$};
\node at (\x3+2*\b,\y3) {$\cdot$};
\node at (\x3+3*\b,\y3) {$\cdot$};
\node at (\x3+4*\b,\y3) {$\cdot$};
\node at (\x3+5*\b,\y3) {$\cdot$};
\draw(\x3,\y3-3*\b)--(\x3,\y3)--(\x3+5*\b,\y3);
\draw(\x3,\y3-2*\b)--(\x3+\b,\y3-2*\b)--(\x3+\b,\y3-\b)--(\x3+3*\b,\y3-\b)--(\x3+3*\b,\y3);
\filldraw[fill=black,opacity=0.1] (\x3,\y3-2*\b)--(\x3+3*\b,\y3-2*\b)--(\x3+3*\b,\y3)--(\x3,\y3);
\filldraw[fill=black,opacity=0.2] (\x3,\y3-2*\b)--(\x3+\b,\y3-2*\b)--(\x3+\b,\y3-\b)--(\x3+3*\b,\y3-\b)--(\x3+3*\b,\y3)--(\x3,\y3);

\def\y1{-2.7}
\filldraw[fill=black,opacity=0.1] (-3-\b,\y1)--(-3+\b,\y1)--(-3+\b,\y1-\b)--(-3-\b,\y1-\b);
\draw(-3+\b,\y1)--(-3-\b,\y1)--(-3-\b,\y1-\b)--(-3,\y1-\b)--(-3,\y1);
\filldraw[fill=black,opacity=0.2](-3+\b,\y1)--(-3-\b,\y1)--(-3-\b,\y1-\b)--(-3,\y1-\b)--(-3,\y1);
\node at (-3-\b,\y1-\b) {$\cdot$};
\node at (-3-\b,\y1) {$\cdot$};
\node at (-3,\y1) {$\cdot$};
\node at (-3+\b,\y1) {$\cdot$};

\filldraw[fill=black,opacity=0.2] (-1-0.5*\b,\y1)--(-1+0.5*\b,\y1)--(-1+0.5*\b,\y1-\b)--(-1-0.5*\b,\y1-\b);
\draw(-1-0.5*\b,\y1)--(-1+0.5*\b,\y1)--(-1+0.5*\b,\y1-\b)--(-1-0.5*\b,\y1-\b)--(-1-0.5*\b,\y1);
\node at (-1-0.5*\b,\y1) {$\cdot$};
\node at (-1+0.5*\b,\y1) {$\cdot$};
\node at (-1-0.5*\b,\y1-\b) {$\cdot$};

\def\x4{2.8}
\def\y4{-1.7}
\filldraw[fill=black,opacity=0.1](\x4,\y4)--(\x4,\y4-\b)--(\x4+3*\b,\y4-\b)--(\x4+3*\b,\y4);
\filldraw[fill=black,opacity=0.2](\x4,\y4)--(\x4,\y4-\b)--(\x4+2*\b,\y4-\b)--(\x4+2*\b,\y4);
\draw(\x4+4*\b,\y4)--(\x4,\y4)--(\x4,\y4-\b)--(\x4+2*\b,\y4-\b)--(\x4+2*\b,\y4);
\node at (\x4,\y4) {$\cdot$};
\node at (\x4,\y4-\b) {$\cdot$};
\node at (\x4+\b,\y4) {$\cdot$};
\node at (\x4+2*\b,\y4) {$\cdot$};
\node at (\x4+3*\b,\y4) {$\cdot$};
\node at (\x4+4*\b,\y4) {$\cdot$};

\def\y5{-4.2}
\node at (0-0.5*\b,\y5) {$\cdot$};
\node at (0+0.5*\b,\y5) {$\cdot$};
\node at (0-0.5*\b,\y5-\b) {$\cdot$};
\filldraw[fill=black,opacity=0.1](0-0.5*\b,\y5)--(0+0.5*\b,\y5)--(0+0.5*\b,\y5-\b)--(0-0.5*\b,\y5-\b);
\draw(0-0.5*\b,\y5-\b)--(0-0.5*\b,\y5)--(0+0.5*\b,\y5);

\node at (2-\b,\y5) {$\cdot$};
\node at (2,\y5) {$\cdot$};
\node at (2+\b,\y5) {$\cdot$};
\node at (2-\b,\y5-2*\b) {$\cdot$};
\node at (2-\b,\y5-\b) {$\cdot$};
\filldraw[fill=black,opacity=0.1](2-\b,\y5)--(2+\b,\y5)--(2+\b,\y5-2*\b)--(2-\b,\y5-2*\b);
\draw(2+\b,\y5)--(2-\b,\y5)--(2-\b,\y5-2*\b);
\draw(2-\b,\y5-\b)--(2,\y5-\b)--(2,\y5);
\filldraw[fill=black,opacity=0.2](2-\b,\y5-\b)--(2,\y5-\b)--(2,\y5)--(2-\b,\y5);

\node at (4-0.5*\b,\y5) {$\cdot$};
\node at (4+0.5*\b,\y5) {$\cdot$};
\node at (4-0.5*\b,\y5-\b) {$\cdot$};
\filldraw[fill=black,opacity=0.2](4-0.5*\b,\y5)--(4+0.5*\b,\y5)--(4+0.5*\b,\y5-\b)--(4-0.5*\b,\y5-\b);
\draw(4-0.5*\b,\y5)--(4+0.5*\b,\y5)--(4+0.5*\b,\y5-\b)--(4-0.5*\b,\y5-\b)--(4-0.5*\b,\y5);
\end{tikzpicture}
\qquad\qquad
\begin{tikzpicture}[scale=0.6]
\def\r{0.4}
\def\b{0.4}
\coordinate (c3) at (0,0);
\coordinate (c4) at (2,-2);
\coordinate (c5) at (0,-3.5);
\coordinate (c6) at (2,-3.5);
\coordinate (c7) at (4,-3.5);
\coordinate (c1) at (-3,-2);
\coordinate (c2) at (-1,-2);
\draw(c3)--(c1);
\draw(c3)--(c2);
\draw(c3)--(c4)--(c5);
\draw(c6)--(c4)--(c7);
\filldraw[fill=white, draw=black] (c1) circle (\r) node {1};
\filldraw[fill=white, draw=black] (c2) circle (\r) node {2};
\filldraw[fill=white, draw=black] (c3) circle (\r) node {3};
\filldraw[fill=white, draw=black] (c4) circle (\r) node {4};
\filldraw[fill=white, draw=black] (c5) circle (\r) node {5};
\filldraw[fill=white, draw=black] (c6) circle (\r) node {6};
\filldraw[fill=white, draw=black] (c7) circle (\r) node {7};

\def\x3{1.0}
\def\y3{1.0}
\node at (\x3,\y3-3*\b) {$\cdot$};
\node at (\x3,\y3-2*\b) {$\cdot$};
\node at (\x3,\y3-1*\b) {$\cdot$};
\node at (\x3,\y3) {$\cdot$};
\node at (\x3+1*\b,\y3) {$\cdot$};
\node at (\x3+2*\b,\y3) {$\cdot$};
\node at (\x3+3*\b,\y3) {$\cdot$};
\node at (\x3+4*\b,\y3) {$\cdot$};
\node at (\x3+5*\b,\y3) {$\cdot$};
\draw(\x3,\y3-3*\b)--(\x3,\y3)--(\x3+5*\b,\y3);
\draw(\x3,\y3-\b)--(\x3+2*\b,\y3-\b)--(\x3+2*\b,\y3);
\filldraw[fill=black,opacity=0.1] (\x3,\y3-2*\b)--(\x3+3*\b,\y3-2*\b)--(\x3+3*\b,\y3)--(\x3,\y3);
\filldraw[fill=black,opacity=0.2] (\x3,\y3-\b)--(\x3+2*\b,\y3-\b)--(\x3+2*\b,\y3)--(\x3,\y3);

\def\y1{-2.7}
\filldraw[fill=black,opacity=0.1] (-3-\b,\y1)--(-3+\b,\y1)--(-3+\b,\y1-\b)--(-3-\b,\y1-\b);
\draw(-3+\b,\y1)--(-3-\b,\y1)--(-3-\b,\y1-\b)--(-3,\y1-\b)--(-3,\y1);
\filldraw[fill=black,opacity=0.2](-3+\b,\y1)--(-3-\b,\y1)--(-3-\b,\y1-\b)--(-3,\y1-\b)--(-3,\y1);
\node at (-3-\b,\y1-\b) {$\cdot$};
\node at (-3-\b,\y1) {$\cdot$};
\node at (-3,\y1) {$\cdot$};
\node at (-3+\b,\y1) {$\cdot$};

\filldraw[fill=black,opacity=0.1] (-1-0.5*\b,\y1)--(-1+0.5*\b,\y1)--(-1+0.5*\b,\y1-\b)--(-1-0.5*\b,\y1-\b);
\draw(-1-0.5*\b,\y1-\b)--(-1-0.5*\b,\y1)--(-1+0.5*\b,\y1);
\node at (-1-0.5*\b,\y1) {$\cdot$};
\node at (-1+0.5*\b,\y1) {$\cdot$};
\node at (-1-0.5*\b,\y1-\b) {$\cdot$};

\def\x4{2.8}
\def\y4{-1.7}
\filldraw[fill=black,opacity=0.1](\x4,\y4)--(\x4,\y4-\b)--(\x4+3*\b,\y4-\b)--(\x4+3*\b,\y4);
\filldraw[fill=black,opacity=0.2](\x4,\y4)--(\x4,\y4-\b)--(\x4+1*\b,\y4-\b)--(\x4+1*\b,\y4);
\draw(\x4+4*\b,\y4)--(\x4,\y4)--(\x4,\y4-\b)--(\x4+1*\b,\y4-\b)--(\x4+1*\b,\y4);
\node at (\x4,\y4) {$\cdot$};
\node at (\x4,\y4-\b) {$\cdot$};
\node at (\x4+\b,\y4) {$\cdot$};
\node at (\x4+2*\b,\y4) {$\cdot$};
\node at (\x4+3*\b,\y4) {$\cdot$};
\node at (\x4+4*\b,\y4) {$\cdot$};

\def\y5{-4.2}
\node at (0-0.5*\b,\y5) {$\cdot$};
\node at (0+0.5*\b,\y5) {$\cdot$};
\node at (0-0.5*\b,\y5-\b) {$\cdot$};
\filldraw[fill=black,opacity=0.2](0-0.5*\b,\y5)--(0+0.5*\b,\y5)--(0+0.5*\b,\y5-\b)--(0-0.5*\b,\y5-\b);
\draw(0-0.5*\b,\y5-\b)--(0-0.5*\b,\y5)--(0+0.5*\b,\y5)--(0+0.5*\b,\y5-\b)--(0-0.5*\b,\y5-\b);

\node at (2-\b,\y5) {$\cdot$};
\node at (2,\y5) {$\cdot$};
\node at (2+\b,\y5) {$\cdot$};
\node at (2-\b,\y5-2*\b) {$\cdot$};
\node at (2-\b,\y5-\b) {$\cdot$};
\filldraw[fill=black,opacity=0.1](2-\b,\y5)--(2+\b,\y5)--(2+\b,\y5-2*\b)--(2-\b,\y5-2*\b);
\draw(2+\b,\y5)--(2-\b,\y5)--(2-\b,\y5-2*\b);
\draw(2-\b,\y5-2*\b)--(2,\y5-2*\b)--(2,\y5-\b)--(2+\b,\y5-\b)--(2+\b,\y5);
\filldraw[fill=black,opacity=0.2](2-\b,\y5-2*\b)--(2,\y5-2*\b)--(2,\y5-\b)--(2+\b,\y5-\b)--(2+\b,\y5)--(2-\b,\y5)--(2-\b,\y5-2*\b);

\node at (4-0.5*\b,\y5) {$\cdot$};
\node at (4+0.5*\b,\y5) {$\cdot$};
\node at (4-0.5*\b,\y5-\b) {$\cdot$};
\filldraw[fill=black,opacity=0.1](4-0.5*\b,\y5)--(4+0.5*\b,\y5)--(4+0.5*\b,\y5-\b)--(4-0.5*\b,\y5-\b);
\draw(4-0.5*\b,\y5-\b)--(4-0.5*\b,\y5)--(4+0.5*\b,\y5);
\end{tikzpicture}
\caption{The duality map in $[e,w]$ for $w$ in \Cref{fig:BP-poset-poincare-dual-example-w}.}
\label{fig:BP-poset-poincare-dual-example-uv}
\end{figure}
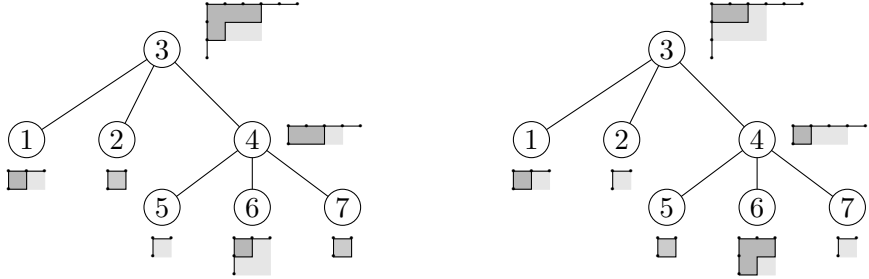
\end{ex}

\begin{remark}\label[remark]{remark:F4-bijection-not-canonical}
The idea of constructing a \emph{canonical bijection} via the matrix of Schubert structure constants does not generalize to all finite Weyl groups. Consider $w=s_2s_3s_1s_2s_1s_3s_2s_4s_3$ in type $F_4$, which indexes a smooth (not just rationally smooth) Schubert variety in the Grassmannian $G/P_J$ where $J=\{1,2,4\}$. The matrix of structure constants $c_{u,v}^w$ for $G/P_J$ between rank $4$ and rank $5$ is shown below:
\begin{center}
\begin{tabular}{c|c|c|c|}
$u\backslash v$ & $s_2s_3s_1s_2s_3$ & $s_3s_4s_1s_2s_3$ & $s_2s_3s_4s_2s_3$ \\ \hline
$s_4s_1s_2s_3$ & 1 & 1 & 1 \\ \hline
$s_3s_1s_2s_3$ & 1 & 1 & 0 \\ \hline
$s_3s_4s_2s_3$ & 0 & 1 & 1 \\ \hline
\end{tabular}.
\end{center}
It is not possible to arrange its rows and columns to make it upper triangular. In other words, there are multiple choices of $\phi:[e,w]^J_{4}\rightarrow [e,w]_5^J$ such that $c_{u,\phi(u)}^w\neq0$, and all such $\phi$'s can be extended to $G/B$, under the group element $w\cdot w_0(J)$. 
\end{remark}

\begin{proof}[Proof of \Cref{thm:intro-structure-constant-matrix}]
Let $W$ be a finite Weyl group of simply-laced type and rank $r$ and let $w \in W$ be (rationally) smooth. Choose a linear extension $\lambda: \bp(w) \to [r]$, guaranteed to exist by \Cref{prop:intro-linear-extension}, and corresponding to a chain $\emptyset = J_0 \subsetneq J_1 \subsetneq \cdots \subsetneq J_r=S$ such that for each $i$ we have that $J_{i-1} \in \BP_{J_i}(w_{J_i})$ (by \Cref{prop:wJ-rs}) and that $w_{J_{i+1}}^{J_{i}}$ and $w_{J_i}$ are $J_i$-rationally smooth and rationally smooth, respectively.

Let $J=J_{r-1}$. Assume by induction that the theorem holds for the rationally smooth element $w_{J} \in W_{J}$. Since $w^{J}$ is $J$-rationally smooth, it follows from \cite[Thm.~5.1]{richmond-slofstra-fiber-bundle} that $w^{J}=w_0(I)^{I \cap J}$ for some $I \subset S$ (this conclusion relies on the fact that $W$ is of simply-laced type, see \Cref{remark:F4-bijection-not-canonical}). This implies that $X^J(w^J)$ is itself a generalized flag variety $G_I/(G_I \cap P_{I \cap J}) \subset G/P_J$. It is well known (applying \cite{kleiman}) that generic translates of $X_I^{I \cap J}(x)$ and $X_I^{I \cap J}(y)$ do not intersect unless $ y^{\vee} \leq x$, where $y^{\vee} \coloneqq w_0(I)yw_0(I \cap J)$, and moreover that when $y^{\vee}=x$ they intersect transversely in a single point. For $u \in [e,w]_k$ and $v \in [e,w]_{\ell(w)-k}$ we therefore also have that $c_{u,v}^w=0$ unless $(u^J)^{\vee} \leq v^J$.  Thus, if we identify $[e,w]$ with $\{(x^J,x_J) \mid x^J \in W_I^{I \cap J}, x_J \in [e,w_J]\}$ and order $[e,w]_k$ (resp. $[e,w]_{\ell(w)-k}$) first with respect to a linear extension of Bruhat order on $(W_I^{I \cap J})^{\vee}$ (resp. on $W_I^{I \cap J}$) on the $x^J$ component, and then with respect to the inductively defined order on the $x_J$ component, we conclude that the matrix $(c_{u,v}^w)_{u,v}$ is upper triangular. Now, by \cite[Thm.~1.1]{richmond-multiplicative} (or as can also be seen using the degenerated Schubert product of Belkale--Kumar \cite{belkale-kumar-product}) the diagonal entries, which correspond to pairs with $(u^J)^{\vee} = v^J$, are equal to $c_{u^J, v^J}^{w^J}=1$ times a diagonal entry of $(c_{u_J,v_J}^{w_J})_{u_J,v_J}$; by induction, we conclude $(c_{u,v}^w)_{u,v}$ is upper \emph{uni}triangular. 
\end{proof}

\begin{proof}[Proof of \Cref{cor:intro-bijection}]
The corollary follows by noting that upper unitriangular matrices have a unique nonzero transversal.
\end{proof}

\section{Generalized Lehmer codes}
\label{sec:lehmer}

Recall that the classical Lehmer code $L:C_n \times C_{n-1} \times \cdots \times C_1 \to \mathfrak{S}_n$ sends $(c_1,\ldots,c_n)$ to the unique permutation $w \in \mathfrak{S}_n$ such that $c_i=|\{j>i \mid w(i)>w(j)\}|$. This map is easily checked to be order- and rank-preserving with respect to Bruhat order on $\mathfrak{S}_n$. In this section we use BP decompositions to give various generalizations (as well as obstructions to generalization) in other types and in certain parabolic quotients.
\begin{defin}
We say that a poset $P$ \emph{admits a Lehmer code} if there exists an order- and rank-preserving bijection $L:C_{a_1}\times\cdots\times C_{a_k}\rightarrow P$ for some $a_1,\ldots,a_k \in \mathbb{N}$.
\end{defin}

\begin{prop}[Lem.~22 \cite{self-dual}; Prop.~4.2 \cite{richmond-slofstra-fiber-bundle}]
\label{prop:BP-gives-subposet-product}
Let $w$ be an element of a Coxeter group $W$ and let $J \in \BP(w)$. Then the multiplication map $[e,w^J]^J \times [e,w_J] \to [e,w]$ is an order- and rank-preserving bijection.
\end{prop}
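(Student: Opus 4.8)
The plan is to prove the map is a well-defined, injective, length-additive, order-preserving map into the interval $[e,w]$ using only standard parabolic facts, and then to upgrade injectivity to bijectivity by comparing length generating functions — the unique point where the hypothesis $J \in \BP(w)$ is used, via \Cref{prop:bp-iff-rgf}.

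First I would recall the standard facts (see \cite{bjorner-brenti}) that $m\colon W^J \times W_J \to W$, $(a,b) \mapsto ab$, is a bijection with $\ell(ab) = \ell(a)+\ell(b)$ whenever $a \in W^J$ and $b \in W_J$, and that the projection $x \mapsto x^J$ is order-preserving for Bruhat order; the latter yields in particular $[e,w^J]^J = [e,w]^J$. Injectivity of the map in question is then immediate from uniqueness of the parabolic decomposition. Next I would check, via the subword criterion for Bruhat order, that $m$ is order-preserving for the product of Bruhat orders: given $a \le a'$ in $W^J$ and $b \le b'$ in $W_J$, concatenate a reduced word for $a'$ with one for $b'$ to obtain (by length-additivity) a reduced word for $a'b'$, then excise letters within the two blocks to produce reduced words for $a$ and for $b$; their concatenation has length $\ell(a)+\ell(b)=\ell(ab)$, so it is a reduced word for $ab$ occurring as a subword of the word for $a'b'$, whence $ab \le a'b'$. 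Applying this with $(a',b')=(w^J,w_J)$ shows that $m$ restricts to a well-defined, injective, length-additive, order-preserving map $\Phi\colon [e,w^J]^J \times [e,w_J] \to [e,w]$. I emphasize that none of this uses $J \in \BP(w)$.

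The crux is surjectivity, and this is where the BP hypothesis is essential. Since $\Phi$ is injective and length-additive, for each $k$ it injects $\{(a,b) : \ell(a)+\ell(b)=k\}$ into $[e,w]_k$; in terms of generating functions this reads $\mathcal{P}^J(w^J)\,\mathcal{P}(w_J) \le \mathcal{P}(w)$ coefficientwise, with equality exactly when $\Phi$ is onto. By \Cref{prop:bp-iff-rgf}, the assumption $J \in \BP(w)$ gives precisely that equality, so the domain and codomain have the same cardinality in every degree; an injection between finite sets of equal size is a bijection, so $\Phi$ is surjective, hence the asserted order-preserving bijection. (The statement does not claim that $\Phi$ is a poset \emph{isomorphism}, and I would not attempt to show it; monotonicity of $\Phi$ alone is what the Lehmer code arguments that follow require.)

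The step I expect to be the real obstacle is exactly this surjectivity: unwound combinatorially, one must show that for every $u \le w$ the parabolic factors satisfy $u^J \le w^J$ (automatic from monotonicity of $x \mapsto x^J$) \emph{and} $u_J \le w_J$, and the latter genuinely fails for general $J$. A direct verification that $u_J \le w_J$ from \Cref{def:BP} — tracking how an inversion of $w_J$ lying outside $D_L(w_J)$ obstructs lifting a Bruhat cover — is possible but delicate, whereas the generating-function argument above sidesteps it entirely and keeps the proof short and self-contained given \Cref{prop:bp-iff-rgf}.
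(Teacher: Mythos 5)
Your proof is correct. The paper does not prove this proposition but imports it from the cited references; your argument --- injectivity, length-additivity, and order-preservation of the multiplication map established in general via the subword property, with surjectivity then forced by the coefficientwise comparison $\mathcal{P}^J(w^J)\mathcal{P}(w_J) \leq \mathcal{P}(w)$ and the equality supplied by \Cref{prop:bp-iff-rgf} --- is essentially the standard argument of Richmond--Slofstra \cite[Prop.~4.2]{richmond-slofstra-fiber-bundle}, so there is nothing to correct.
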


\begin{remark}
It is \emph{not} the case that the order-preserving bijection of \Cref{prop:BP-gives-subposet-product} is an order isomorphism, as one can already see by taking $w=w_0$ in $\mathfrak{S}_3$.
\end{remark}

\Cref{thm:An-Ak-lehmer} below gives a significant generalization of the classical Lehmer code to intervals below all rationally smooth elements in certain parabolic quotients of $\mathfrak{S}_n$. This result is also a key step in the construction of generalized Lehmer codes in other finite Coxeter groups.

\begin{theorem}
\label{thm:An-Ak-lehmer}
Let $W=\mathfrak{S}_n$ and let $J=\{s_j,s_{j+1},\ldots,s_{n-1}\}$ for some $1 \leq j \leq n$. Let $w \in W^J$ be $J$-rationally smooth. Then $[e,w]^J$ admits a Lehmer code.   
\end{theorem}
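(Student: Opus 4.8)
The plan is to induct on $n$. The cases $J=S$ and $J=\emptyset$ are immediate: the former because $W^J$ is then trivial, the latter because it is exactly Gasharov's theorem \cite{gasharov}. If $\Supp(w)\subsetneq S$, then $w$ lies in a proper standard parabolic $W_{S'}$, a product of smaller symmetric groups; the interval $[e,w]^J$ agrees with $[e,w]^{J\cap S'}$ computed inside $W_{S'}$ and hence factors as a product of intervals, one per factor of $W_{S'}$, each of which is easily seen to again be of the form treated by the theorem in a smaller symmetric group. Since a product of posets admitting Lehmer codes admits one, this case follows by induction, so I may assume $\Supp(w)=S$ and $\emptyset\neq J\neq S$.

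The key step is a reduction to the Grassmannian case. Put $\widehat{w}:=w\,w_0(J)\in\mathfrak{S}_n$. As $(\widehat{w})^J=w$, $(\widehat{w})_J=w_0(J)$, and $\Supp(w)\cap J\subseteq J=D_L(w_0(J))$, the factorization $\widehat{w}=w\,w_0(J)$ is BP at $J$, so $\mathcal{P}(\widehat{w})=\mathcal{P}^J(w)\,\mathcal{P}(w_0(J))$ by \Cref{prop:bp-iff-rgf}; both factors are palindromic (the first since $w$ is $J$-rationally smooth, the second always), so $\widehat{w}$ is rationally smooth in $\mathfrak{S}_n$ by \Cref{thm:rs-equiv-defs}. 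By \Cref{prop:intro-linear-extension} and its proof, $\bp(\widehat{w})$ is a poset on $S$ all of whose blocks are singletons, so $\BP(\widehat{w})$ is the lattice of its order ideals; since $J$ is one of these and $S\setminus J\neq\emptyset$, choosing a maximal element $s_m$ of $\bp(\widehat{w})$ lying in $S\setminus J$ produces a maximal parabolic $L=S\setminus\{s_m\}$ with $J\subseteq L$ and $L\in\BP(\widehat{w})$.

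Applying \Cref{prop:BP-gives-subposet-product} to $\widehat{w}$ at $L$ gives a Bruhat order-preserving bijection $[e,(\widehat{w})^L]^L\times[e,(\widehat{w})_L]\to[e,\widehat{w}]$. Writing the $L$-parabolic factorization $w=w^L v$ with $w^L\in W^L$ and $v\in W_L^J$ (as $w\in W^J$ and $J\subseteq L$), one checks $(\widehat{w})^L=w^L$ and $(\widehat{w})_L=v\,w_0(J)$; intersecting with $W^J$, and using that for $a\in W^L$, $b\in W_L$ one has $ab\in W^J$ iff $b\in W_L^J$ (since $J\subseteq L$ and $I(a)\subseteq\Phi^+\setminus\Phi_L^+$ is $W_L$-stable), one obtains an order-preserving bijection
\[
[e,w^L]^L\times[e,v]^J\longrightarrow[e,w]^J.
\]
By \Cref{prop:wJ-rs}, $w^L$ is $L$-rationally smooth and $v\,w_0(J)$ is rationally smooth in $W_L\cong\mathfrak{S}_m\times\mathfrak{S}_{n-m}$; running the $\widehat{w}$-reduction inside $W_L$ shows $v$ is $J$-rationally smooth there. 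Splitting $J=J'\sqcup J''$ over the two factors, a routine check shows $J'$ and $J''$ are again of the shape allowed by the theorem, and the two components of $v$ are respectively $J'$- and $J''$-rationally smooth; as $m,n-m<n$, induction furnishes Lehmer codes for both, hence for $[e,v]^J$.

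The remaining, and main, point is the base of this reduction: $[e,w^L]^L$, where $L$ is a maximal parabolic and $w^L$ is $L$-rationally smooth. Here $X^L(w^L)$ is a rationally smooth Schubert variety in a type-$A$ Grassmannian, hence a sub-Grassmannian (its partition is a rectangle), so $[e,w^L]^L$ is the full Bruhat poset of some $\mathrm{Gr}(a,a+b)$, i.e.\ Young's lattice of partitions in an $a\times b$ rectangle, and the task is to equip this with a Lehmer code. I expect this to be the hard part: one cannot appeal to Gasharov's theorem or to the induction, since $\mathrm{Gr}(a,a+b)$ is not of the prescribed quotient shape when $\min(a,b)\geq3$ and its Bruhat order is not literally a product of chains (already $\mathrm{Gr}(2,4)$, with $6$ elements, is not $C_2\times C_3$), so the Lehmer code must be a bona fide order-preserving bijection rather than an isomorphism and must be produced directly. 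Granting it, the displayed bijection realizes $[e,w]^J$ as an order-preserving image of a product of posets each carrying a Lehmer code, completing the induction.
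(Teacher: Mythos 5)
Your reduction steps are largely sound: passing to $\widehat{w}=w\,w_0(J)$, checking it is rationally smooth, extracting a maximal parabolic $L=S\setminus\{s_m\}\supseteq J$ with $L\in\BP(\widehat{w})$ from the BP poset, the order-preserving bijection $[e,w^L]^L\times[e,v]^J\to[e,w]^J$, and the inductive treatment of $[e,v]^J$ all check out. But the proof is incomplete exactly where you say it is, and the missing piece is not a routine verification --- it is the crux, and it is most likely false. Your Grassmannian factor $[e,w^L]^L\cong L(a,b)$ (partitions in an $a\times b$ rectangle) can genuinely have $\min(a,b)\geq 2$: e.g.\ for $w_0\in\mathfrak{S}_6$ with $J=\{s_4,s_5\}$, every $s_m\in S\setminus J$ is maximal in $\bp(w_0)$ and choosing $s_m=s_3$ yields $L(3,3)$; worse, $s_1$ need not be maximal in $\bp(\widehat{w})$ at all, so you cannot always steer toward projective space. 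For $\min(a,b)\geq2$ the rank generating function $\binom{a+b}{a}_q$ has linear coefficient $1$ but quadratic coefficient $2$, so it is not a product of $q$-integers and no graded code exists; the remark immediately following the theorem in the paper, that the statement already fails for $(\mathfrak{S}_4)^{\{s_1,s_3\}}\cong\mathrm{Gr}(2,4)=L(2,2)$, shows the authors regard precisely this Grassmannian case as a counterexample. Even reading ``Lehmer code'' as a bare (not rank-preserving) order-preserving bijection, one can check directly that no order-preserving bijection $C_4\times C_5\to L(3,3)$ exists (the positions of the rank-$3$ and rank-$6$ antichains $\{3,21,111\}$ and $\{33,321,222\}$ are forced into comparable slots), so ``granting it'' is granting something unproved and probably unprovable.

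The paper's proof differs from yours at exactly this fork and is built to avoid it. Rather than taking an arbitrary Grassmannian BP decomposition of $u=w\,w_0(J)$, it examines the position of the value $1$ in $u$ (the sets $B_1,B_2$, at least one of which is empty by $3412$-avoidance) and produces a BP decomposition at the \emph{leaf} $s_1$: a right BP decomposition at $K=\{s_2,\dots,s_{n-1}\}$ when $B_1=\emptyset$, and a \emph{left} BP decomposition when $B_2=\emptyset$ (with special handling of $u=w_0$). In both cases the Grassmannian factor $W^K$ is a projective space, i.e.\ a chain, so no Lehmer code for a genuine Grassmannian is ever needed. Note that the left-handed case is unavoidable and is invisible to $\bp(\widehat{w})$, which records only right BP decompositions; this is why your selection of $s_m$ cannot be repaired simply by ``choosing the leaf when available.'' To salvage your argument you would need either a proof that every $L(a,b)$ admits a Lehmer code (contradicting the paper's own remark under its intended reading) or a mechanism forcing the Grassmannian factor to be a chain --- which is what the paper's $B_1/B_2$ analysis supplies.
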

\begin{proof}
If $j \leq 2$ then $W^J$ is a chain under Bruhat order, so the result is clear. So suppose $j>2$. Note that by \cite[Prop.~3.4]{deodhar-parabolic} we have that $w$ is $J$-rationally smooth if and only if $u=ww_0(J)$ is rationally smooth. Let $B_1=\{i > u^{-1}(1) \mid u(i)<u(1)\}$ and $B_2=\{i<u^{-1}(1) \mid u(i)>u(1)\}$. Then at least one of $B_1$ and $B_2$ is empty, otherwise $u$ contains a $3412$ pattern $u(1)u(i_2)1u(i_1)$, where $i_1 \in B_1$ and $i_2 \in B_2$, violating the Lakshmibai--Sandhya smoothness criterion \cite{lakshmibai-sandhya}. Also note that the values of $u$ on the indices from $\{1 \leq i \leq u^{-1}(1) \mid u(i) \leq u(1)\}$ are decreasing from left to right, since otherwise $u$ would contain $4231$. 

Suppose first that $B_1 = \emptyset$. Then letting $K=\{s_2,\ldots,s_{n-1}\} \supsetneq J$ we see by \Cref{cor:BP-patterns-typeA} that $K \in \BP(u)$. Indeed, by our choice of $K$, $u$ cannot contain $\underline{23}1$ or $3\underline{14}2$ and, since $B_1 = \emptyset$, it cannot contain $3\underline{12}$. Restricting the map from \Cref{prop:BP-gives-subposet-product} then gives an order- and rank-preserving bijection $[e,u^K]^K \times [e,u_K^J]^J \to [e,u^J]^J = [e,w]^J$. Indeed, any $x \leq u$ has $x_K \leq u_K$ and hence $x_K^J \leq u_K^J$, so this restriction surjects onto $[e,w]^J$. Since $[e,u^K]^K$ is a chain and $[e,u_K^J]^J$ admits a Lehmer code by induction, the result is proved.

Now suppose that $B_2 = \emptyset$. Then by applying the above argument to $u^{-1}$, we see that $\leftindex_{K}{u} \cdot \leftindex^{K}{u}$ is a \emph{left} BP decomposition of $u$. Suppose that $j \leq u^{-1}(1)$. Then, since $J \subset D_R(u)$ we have $u^{-1}(1)=n$; since $B_2=\emptyset$ we must have that $u(1)=n$. Since $u$ avoids $4231$ we must in fact have that $u=w_0$. In this case $[e,w]^J$ has a Lehmer code obtained by restricting the classical Lehmer code to $C_n \times \cdots \times C_{n-j} \times \{0\} \times \cdots \times \{0\}$. So assume that $j > p\coloneqq u^{-1}(1)$. Then $\leftindex^{K}{u}=s_1s_2\cdots s_{p-1}$ commutes with $w_0(J)$ so 
\begin{equation}
\label{eq:commuting}
    u=\leftindex_{K}{u} \cdot \leftindex^{K}{u} = (\leftindex_{K}{u})^J w_0(J) s_1s_2\cdots s_{p-1} = (\leftindex_{K}{u})^J s_1s_2\cdots s_{p-1} w_0(J).
\end{equation}
Consider the multiplication map $[e,(\leftindex_{K}{u})^J]^J \times \leftindex^{K}{[e,\leftindex^{K}{u}]} \to [e,w]^J$. This map is length-additive, order-preserving, and injective, since $\phi: [e, \leftindex_{K}{u}] \times \leftindex^{K}{[e,\leftindex^{K}{u}]} \to [e,u]$ is. It is surjective since if $x = \phi(\leftindex_{K}{x}, \leftindex^{K}{x})  \in [e,w]^J$ then it must be that $\leftindex_{K}{x} \in W^J$; otherwise, commuting $(\leftindex_{K}{x})_J$ past $\leftindex^{K}{x}=s_1s_2 \cdots s_{q-1}$ (for $q \leq p$) as in (\ref{eq:commuting}) would show that $D_R(x) \cap J \neq \emptyset$. Now, $[e,(\leftindex_{K}{u})^J]^J$ admits a Lehmer code by induction and $\leftindex^{K}{[e,\leftindex^{K}{u}]}$ is a chain, so $[e,w]^J$ admits a Lehmer code.
\end{proof}

\begin{remark}
Note that \Cref{thm:An-Ak-lehmer} cannot be extended to arbitrary type $A$ parabolic quotients, as we see already for $(\mathfrak{S}_4)^{\{s_1,s_3\}}$, and it also cannot be extended to general types.
\end{remark}
\begin{ex}
Let $W=\mathfrak{S}_5$, $J=\{s_4\}$, and $w=52134\in W^J$. Bruhat order on $[e,w]^J$ is shown in \Cref{fig:lehmer-code-quotient}, along with the Lehmer code computed as in the proof of \Cref{thm:An-Ak-lehmer}, where we are in the case $B_2=\emptyset$, $K=\{s_2,s_3,s_4\}$, $u=52143$, $\leftindex_{K}{u}=15234$, $\leftindex^{K}{u}=s_1s_2$. In \Cref{fig:lehmer-code-quotient}, the Lehmer code for $[e,(\leftindex_{K}{u})^J]^J$ is provided in the first two digits, and the Lehmer code for $\leftindex^{K}{[e,\leftindex^{K}{u}]}$ is highlighted at the last digit. 
\begin{figure}[h!]
\centering
\begin{tikzpicture}[scale=0.6]
\def\a{1.0};
\def\b{0.7};
\def\h{3.5};
\def\r{0.2};
\newcommand\Rec[3]{
\node[align=center] at (#1,#2) {#3};
\draw(#1-\a,#2-\b+\r)--(#1-\a,#2+\b-\r);
\draw(#1-\a+\r,#2+\b)--(#1+\a-\r,#2+\b);
\draw(#1+\a,#2+\b-\r)--(#1+\a,#2-\b+\r);
\draw(#1+\a-\r,#2-\b)--(#1-\a+\r,#2-\b);
\draw (#1-\a,#2-\b+\r) arc (180:270:\r);
\draw (#1+\a-\r,#2-\b) arc (270:360:\r);
\draw (#1+\a,#2+\b-\r) arc (0:90:\r);
\draw (#1-\a+\r,#2+\b) arc (90:180:\r);
}
\Rec{0}{0}{$12345$\\$00\mathbf{0}$}
\Rec{-4*\a}{\h}{$12435$\\$10\mathbf{0}$}
\Rec{0*\a}{\h}{$13245$\\$01\mathbf{0}$}
\Rec{4*\a}{\h}{$21345$\\$00\mathbf{1}$}
\Rec{-8*\a}{2*\h}{$12534$\\$20\mathbf{0}$}
\Rec{-4*\a}{2*\h}{$14235$\\$11\mathbf{0}$}
\Rec{0*\a}{2*\h}{$21435$\\$10\mathbf{1}$}
\Rec{4*\a}{2*\h}{$23145$\\$00\mathbf{2}$}
\Rec{8*\a}{2*\h}{$31245$\\$01\mathbf{1}$}
\Rec{-8*\a}{3*\h}{$15234$\\$21\mathbf{0}$}
\Rec{-4*\a}{3*\h}{$21534$\\$20\mathbf{1}$}
\Rec{0*\a}{3*\h}{$24135$\\$10\mathbf{2}$}
\Rec{4*\a}{3*\h}{$32145$\\$01\mathbf{2}$}
\Rec{8*\a}{3*\h}{$41235$\\$11\mathbf{1}$}
\Rec{-4*\a}{4*\h}{$25134$\\$20\mathbf{2}$}
\Rec{0*\a}{4*\h}{$42135$\\$11\mathbf{2}$}
\Rec{4*\a}{4*\h}{$51234$\\$21\mathbf{1}$}
\Rec{0*\a}{5*\h}{$52134$\\$21\mathbf{2}$}

\draw(0*\a,0*\h+\b)--(-4*\a,\h-\b);
\draw(0*\a,0*\h+\b)--(0*\a,\h-\b);
\draw(0*\a,0*\h+\b)--(4*\a,\h-\b);
\draw(-4*\a,1*\h+\b)--(-8*\a,2*\h-\b);
\draw(-4*\a,1*\h+\b)--(-4*\a,2*\h-\b);
\draw(-4*\a,1*\h+\b)--(0*\a,2*\h-\b);
\draw(0*\a,1*\h+\b)--(-4*\a,2*\h-\b);
\draw(0*\a,1*\h+\b)--(4*\a,2*\h-\b);
\draw(0*\a,1*\h+\b)--(8*\a,2*\h-\b);
\draw(4*\a,1*\h+\b)--(0*\a,2*\h-\b);
\draw(4*\a,1*\h+\b)--(4*\a,2*\h-\b);
\draw(4*\a,1*\h+\b)--(8*\a,2*\h-\b);
\draw(-8*\a,2*\h+\b)--(-8*\a,3*\h-\b);
\draw(-8*\a,2*\h+\b)--(-4*\a,3*\h-\b);
\draw(-4*\a,2*\h+\b)--(-8*\a,3*\h-\b);
\draw(-4*\a,2*\h+\b)--(0*\a,3*\h-\b);
\draw(-4*\a,2*\h+\b)--(8*\a,3*\h-\b);
\draw(0*\a,2*\h+\b)--(-4*\a,3*\h-\b);
\draw(0*\a,2*\h+\b)--(0*\a,3*\h-\b);
\draw(0*\a,2*\h+\b)--(8*\a,3*\h-\b);
\draw(4*\a,2*\h+\b)--(0*\a,3*\h-\b);
\draw(4*\a,2*\h+\b)--(4*\a,3*\h-\b);
\draw(8*\a,2*\h+\b)--(4*\a,3*\h-\b);
\draw(8*\a,2*\h+\b)--(8*\a,3*\h-\b);
\draw(-8*\a,3*\h+\b)--(-4*\a,4*\h-\b);
\draw(-8*\a,3*\h+\b)--(4*\a,4*\h-\b);
\draw(-4*\a,3*\h+\b)--(-4*\a,4*\h-\b);
\draw(-4*\a,3*\h+\b)--(4*\a,4*\h-\b);
\draw(0*\a,3*\h+\b)--(-4*\a,4*\h-\b);
\draw(0*\a,3*\h+\b)--(0*\a,4*\h-\b);
\draw(4*\a,3*\h+\b)--(0*\a,4*\h-\b);
\draw(8*\a,3*\h+\b)--(0*\a,4*\h-\b);
\draw(8*\a,3*\h+\b)--(4*\a,4*\h-\b);
\draw(-4*\a,4*\h+\b)--(0*\a,5*\h-\b);
\draw(0*\a,4*\h+\b)--(0*\a,5*\h-\b);
\draw(4*\a,4*\h+\b)--(0*\a,5*\h-\b);
\end{tikzpicture}
\caption{The Lehmer code for $[e,52134]^{\{s_4\}}$ as constructed in \Cref{thm:An-Ak-lehmer}.}
\label{fig:lehmer-code-quotient}
\end{figure}
\end{ex}

The following (\emph{almost} type-uniform) result is key to our proof of \Cref{thm:intro-lehmer-code}; it reduces the Lehmer code question for rationally smooth $w$, except in a few edge cases, to the question for $w=w_0$.

\begin{prop}
\label{prop:min-is-w0}
Let $W$ be a finite Coxeter group not having an irreducible factor of type $E$. Suppose that $w \in W$ is minimal, under Bruhat order, among all rationally smooth elements of $W$ which do not admit a Lehmer code. Then $w=w_0(\Supp(w))$.
\end{prop}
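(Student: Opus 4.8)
The plan is to argue by contradiction: suppose $w$ is minimal in Bruhat order among rationally smooth elements of $W$ admitting no Lehmer code, but $w \neq w_0(\Supp(w))$.

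\emph{Reductions.} Passing to $W_{\Supp(w)}$ is harmless: a rationally smooth $w' < w$ in $W_{\Supp(w)}$ without a Lehmer code would already contradict minimality in $W$, and $W_{\Supp(w)}$ still has no irreducible factor of type $E$. So we may assume $\Supp(w) = S$ and must prove $w = w_0$. If $W = W' \times W''$ nontrivially, then $w = (w', w'')$ with $[e,w] \cong [e,w'] \times [e,w'']$; both factors are rationally smooth (rational smoothness depends only on the lower interval) and strictly below $w$, so each has a Lehmer code by minimality, and their concatenation is one for $[e,w]$, a contradiction. Hence $W$ is irreducible; henceforth $W$ is irreducible finite, not of type $E$, $w$ has full support and is rationally smooth, $[e,w]$ has no Lehmer code, $w$ is minimal such, and (for contradiction) $w \neq w_0$.

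\emph{Reducing to a single Grassmannian layer.} Since $w \neq w_0$, \Cref{thm:rs-implies-gr-bp} together with the construction of $\bp(w)$ (whose blocks are all singletons by \Cref{prop:intro-linear-extension}) provides a simple reflection $s$ maximal in $\bp(w)$, i.e.\ $J \coloneqq S \setminus \{s\} \in \BP(w)$. Full support forces $s \in \Supp(w)$ (as $\Supp(w) = \Supp(w^J) \cup \Supp(w_J)$ and $\Supp(w_J) \subseteq J$), hence $w^J \neq e$, $\ell(w_J) < \ell(w)$, and $w_J < w$. By \Cref{prop:wJ-rs}, $w_J$ is rationally smooth and $w^J$ is $J$-rationally smooth, so $[e,w_J]$ admits a Lehmer code by minimality. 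By \Cref{prop:BP-gives-subposet-product} the product map $[e,w^J]^J \times [e,w_J] \to [e,w]$ is an order-preserving bijection, so composing a Lehmer code for $[e,w^J]^J$ (if one exists) with the one for $[e,w_J]$ would yield an order-preserving bijection from a product of chains onto $[e,w]$. Thus it suffices to exhibit \emph{some} maximal element $s$ of $\bp(w)$ for which $[e,w^J]^J$ admits a Lehmer code.

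\emph{The Grassmannian layer --- the main obstacle.} This step is the crux, and it is delicate because $[e,w^J]^J$ need \emph{not} admit a Lehmer code for an arbitrary maximal $J$: for instance, in $\mathfrak{S}_4$ with $J = \{s_1,s_3\} = S \setminus \{s_2\}$, the element $w_0^J$ is $J$-rationally smooth and $[e,w_0^J]^J$ is the poset of partitions in a $2 \times 2$ box, with rank generating function $1 + q + 2q^2 + q^3 + q^4$, which is not a product of $q$-integers. My plan is to combine two ingredients. First, the classification of $J$-rationally smooth elements in maximal parabolic quotients: by \cite[Thm.~5.1]{richmond-slofstra-fiber-bundle}, together with a finite computation listing the exceptions in the multiply-laced types $B, C, F_4, G_2$ and a direct analysis in types $H_3, H_4, I_2(m)$, either $w^J = w_0(I)^{I \cap J}$ with $I = \Supp(w^J)$ --- so that $X^J(w^J) = G_I/(G_I \cap P_{I \cap J})$ and $[e,w^J]^J \cong (W_I)^{I \setminus \{s\}}$ is a full maximal parabolic quotient --- or $w^J$ lies in a short explicit list whose intervals are checked by hand. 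Second, \Cref{thm:An-Ak-lehmer}, which supplies Lehmer codes for the type-$A$ parabolic quotients that arise (in particular chains, when a leaf is removed). It then remains to show that when $w \neq w_0$ one can always choose the maximal element $s$ of $\bp(w)$ so as to land in a favorable case --- for example so that $s$ is a leaf of the diagram of $\Supp(w^J)$, or so that $w^J$ is one of the exceptional elements known to have a Lehmer code. I expect this last point to be the main difficulty: it uses that $\bp(w)$ has all singleton blocks, that deleting an internal simple reflection disconnects $S$ (so \Cref{lem:totally-disconnected-BP} reduces to the components), and the constraints on the closures $\cl_w$ from \Cref{lem:closure-basic}(4),(5); the path-shaped diagrams ($A,B,C,F_4,H_3,H_4,I_2(m)$) and the fork of $D_n$ are handled separately, and one must also rule out the sporadic ``quadric''-type and ``box''-type quotients arising there, except when their appearance forces $w = w_0$. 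Finally, type $E$ is excluded precisely because the finite checks in this step --- the exceptional lists and the verification that their intervals admit Lehmer codes --- are, at present, beyond a feasible computation; see \Cref{rmk:type-E}.
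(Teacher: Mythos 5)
Your reductions (connected support, minimality giving a Lehmer code for $[e,w_J]$, and the observation via \Cref{prop:BP-gives-subposet-product} that it suffices to produce a Lehmer code for the Grassmannian quotient) match the paper's opening moves, and you have correctly identified \cite[Thm.~5.1]{richmond-slofstra-fiber-bundle} and \Cref{thm:An-Ak-lehmer} as the key external ingredients. But the step you defer --- ``it remains to show that when $w \neq w_0$ one can always choose the maximal element $s$ of $\bp(w)$ so as to land in a favorable case'' --- is the entire content of the proposition, and the strategy you sketch for it (pick $s$ so that $[e,w^J]^J$ itself admits a Lehmer code) is not how the argument can be closed. Even for $w\neq w_0$ there may be essentially only one admissible Grassmannian $J=S\setminus\{s\}$, and the resulting quotient $W_I^{I\setminus\{s\}}$ (with $I=\Supp(w^J)$) can fail to have a Lehmer code while $[e,w]$ still has one: e.g.\ quotients such as $D_m/D_{m-1}$ or $D_m/A_{m-1}$ have rank generating functions like $(1+q^{m-1})[m]_q$, which are not products of $q$-integers.

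The maneuver you are missing is the pivot from the right decomposition to a \emph{left} one. Writing $w^J=w_0(I)^{I\cap J}$ (after disposing of the chain-like exceptions in \cite[Thm.~5.1]{richmond-slofstra-fiber-bundle}), the BP condition $\Supp(w^J)\cap J=I\cap J\subset D_L(w_J)$ forces $w_J=w_0(I\cap J)u$ and hence $w=w_0(I)\cdot u$, a left BP decomposition. This does two things at once: if $I=\Supp(w)$ it immediately yields $u=e$ and $w=w_0(\Supp(w))$ (so the conclusion is derived, not contradicted); and if $I\subsetneq\Supp(w)$ it provides a \emph{second} product decomposition $[e,w_0(I)]\times \leftindex^{I\cap J}{[e,u]}$ of $[e,w]$, where $[e,w_0(I)]$ has a Lehmer code by minimality. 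The final diagram inspection then only needs \emph{one} of the two quotients $W_I^{I\cap J}$ or $\leftindex^{I\cap J}{W_J}$ to be of the specific type-$A$ shape covered by \Cref{thm:An-Ak-lehmer} (or a $B_n/B_{n-1}$ chain), which is what the leaf condition on $s$ (secured via the inversion automorphism, another detail you omit) guarantees outside type $E$. Without this left/right interplay your case analysis cannot be completed, so the proposal as written has a genuine gap at its central step.
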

\begin{proof}
Let $K=\Supp(w)$. First observe that $K$ must be connected, since otherwise $[e,w]$ would be a product of rationally smooth intervals $[e,w_{K'}]$ for $K'$ a connected component of $K$. But each of these has a Lehmer code by hypothesis, yielding a Lehmer code for $[e,w]$.

Now let $J = K \setminus \{s\} \in \BP(w)$ be a Grassmannian BP decomposition, with $s$ a leaf of $K$, which without loss of generality we can assume exists by \cite[Thm.~5.1]{richmond-slofstra-fiber-bundle}; if this did not exist for $w$, then we could apply the inversion automorphism of Bruhat order on $W_K$, which preserves rational smoothness. Then, since $w_J < w$, and since $w_J$ is rationally smooth by \Cref{prop:wJ-rs}, we have by our assumption of minimality that $[e,w_J]$ admits a Lehmer code $L_1: C_{a_1} \times \cdots \times C_{a_k} \to [e,w_J]$. Clearly, then, if $[e,w^J]^J$ admits a Lehmer code $L_2: C_{b_1} \times \cdots \times C_{b_l} \to [e,w^J]^J$ then the composition
\[
m \circ (L_2 \times L_1): (C_{b_1} \times \cdots \times C_{b_l}) \times (C_{a_1} \times \cdots \times C_{a_k}) \to [e,w^J]^J \times [e,w_J] \to [e,w]
\]
yields a Lehmer code of $[e,w]$, where $m$ is the multiplication map of \Cref{prop:BP-gives-subposet-product}. So, by our assumption on $w$, $[e,w^J]^J$ must not admit a Lehmer code. 

We now aim to argue that $w^J=w_0(I)^{I \cap J}$ for some $I \subset K$ with $s \in I$. By \Cref{prop:wJ-rs}, we know that $w^J$ is $J$-rationally smooth in $W_K$. So, by \cite[Thm.~5.1]{richmond-slofstra-fiber-bundle},  the alternative is that $w^J$ falls into one of the following exceptional cases (numbering the simple reflections as in \Cref{fig:cox-diagrams}):
\begin{itemize}
    \item[(1)] $W_{\Supp(w^J)}$ is of type $B_n$ with either
        \begin{itemize}
            \item[$\bullet$] $J=\Supp(w^J) \setminus \{s_1\}$ and $w^J=s_js_{j+1}\cdots s_ns_{n-1} \cdots s_1$ for some $1 < j \leq n$, or
            \item[$\bullet$] $J=\Supp(w^J) \setminus \{s_n\}$ with $n \geq 2$ and $w^J=s_1\cdots s_n$.
        \end{itemize}
    \item[(2)] $W_{\Supp(w^J)}$ is of type $F_4$ with either
        \begin{itemize}
            \item[$\bullet$] $J=\Supp(w^J) \setminus \{s_1\}$ and $w^J=s_4s_3s_2s_1$, or
            \item[$\bullet$] $J=\Supp(w^J) \setminus \{s_4\}$ and $w^J=s_1s_2s_3s_4$.
        \end{itemize}
    \item[(3)] $W_{\Supp(w^J)}$ is of type $I_2(m)$ and $2 \leq \ell(w^J) \leq m-2$.
\end{itemize}
However, for each of these exceptional cases, it is not hard to check that $[e,w^J]^J$ is a chain, so in particular admits a Lehmer code. Thus it must be that $w^J=w_0(I)^{I \cap J}$ for some $I \subset K$. Furthermore, we must have $s \in I$, otherwise $w^J=e$, so $[e,w^J]^J$ would admit a (trivial) Lehmer code.

We know now that our BP decomposition is of the form $w=w_0(I)^{I \cap J} \cdot w_J$. If $I'$ is a connected component of $I$ not containing $s$, then $I' \subset J$, and replacing $I$ with $I \setminus I'$ does not change the value of $w_0(I)^{I \cap J}$, so we may assume that $I$ is connected. Given any $s' \in I$, there is a non-backtracking path $s'=s_{i_1}, s_{i_2}, \ldots, s_{i_k}=s$ from $s'$ to $s$ in the Coxeter diagram of $I$. This yields an element $x=s_{i_1}\cdots s_{i_k} \in W_I^{I \cap J}$ with $s'$ in its support. Since $x \leq w_0(I)^{I \cap J}$, we conclude that $\Supp(w_0(I)^{I \cap J})=I$. Therefore, by \Cref{def:BP}, the fact that $w_0(I)^{I \cap J} \cdot w_J$ is a BP decomposition yields:
\[
\Supp(w_0(I)^{I \cap J}) \cap J = I \cap J \subset D_L(w_J).
\]
This implies that $w_J=w_0(I \cap J)\cdot u$, where $u \in \leftindex^{I\cap J}{W}_J$ and that $w=w_0(I)\cdot u$. This is a \emph{left} BP decomposition of $w$ with respect to $I$, since $D_R(w_0(I)) = I \supset I \cap \Supp(u)$ and $u^{-1}$ is $I$-rationally smooth. If $I=K$, then $u \in \leftindex^{J}{W}_J = \{e\}$ so $w=w_0(K)$ and we are done. Otherwise we have $I \subsetneq K$; thus $w_0(I) < w$ admits a Lehmer code by assumption (the longest element of a parabolic subgroup is always rationally smooth). 

We have reduced to the case where $I, J \subsetneq K$ are connected, $K$ is not of type $E$, $J=K \setminus \{s\}$ for a leaf $s$ of $K$, and $s \in I$. By inspection of the possible Coxeter diagrams (see \Cref{fig:cox-diagrams}), we see that at least one of the following holds:
\begin{itemize}
    \item $J$ is of type $A$ with $I \cap J$ containing a leaf of $J$, so $[e,w]$ has a Lehmer code induced by the BP decomposition $w=w_0(I)u$ using the Lehmer code for $\leftindex^{I\cap J}[e,u]$ from \Cref{thm:An-Ak-lehmer}; or,
    \item $I$ is of type $A$ with $I \cap J$ containing a leaf of $I$, so $[e,w]$ has a Lehmer code induced by the BP decomposition $w=w_0(I)^{I \cap J}w_J$ using the Lehmer code for $[e,w_0(I)^{I \cap J}]^{I \cap J}$ from \Cref{thm:An-Ak-lehmer}; or,
    \item $(I \cap J, J)$ are of type $(B_{n-1},B_n)$, yielding a Lehmer code since the parabolic quotient $W_J^{I \cap J}$ is a chain (in fact this case is only needed for $K$ of type $F_4$).
\end{itemize}
\end{proof}

\begin{remark}
\label[remark]{rmk:type-E}
    Our proof of \Cref{prop:min-is-w0} goes through in type $E$ except for very few cases, such as $(I \cap J, I, J, K)$ having types $(D_4,D_5,D_5,E_6)$. 
\end{remark}

\begin{figure}
\centering
\begin{tikzpicture}[scale=0.7]
\draw(3,0)--(0,0);
\draw (8,0)--(5,0);
\node[draw,shape=circle,fill=black,scale=0.5][label=below: {$1$}] at (0,0) {};
\node[draw,shape=circle,fill=black,scale=0.5][label=below: {$2$}] at (1,0) {};
\node[draw,shape=circle,fill=black,scale=0.5][label=below: {$3$}] at (2,0) {};
\node[draw,shape=circle,fill=black,scale=0.5][label=below: {}] at (3,0) {};
\node[draw,shape=circle,fill=black,scale=0.5][label=below: {}] at (5,0) {};
\node[draw,shape=circle,fill=black,scale=0.5][label=below: {}] at (6,0) {};
\node[draw,shape=circle,fill=black,scale=0.5][label=below: {$n{-}1$}] at (7,0) {};
\node[draw,shape=circle,fill=black,scale=0.5][label=below: {$n$}] at (8,0) {};
\node at (4,0) {$\cdots$};
\node at (-1,0) {$A_n$};

\draw(0,-2)--(3,-2);
\draw(5,-2)--(7,-2);
\draw(7,-2)--(8,-2);
\node[draw,shape=circle,fill=black,scale=0.5][label=below: {$1$}] at (0,-2) {};
\node[draw,shape=circle,fill=black,scale=0.5][label=below: {$2$}] at (1,-2) {};
\node[draw,shape=circle,fill=black,scale=0.5][label=below: {$3$}] at (2,-2) {};
\node[draw,shape=circle,fill=black,scale=0.5][label=below: {}] at (3,-2) {};
\node[draw,shape=circle,fill=black,scale=0.5][label=below: {}] at (5,-2) {};
\node[draw,shape=circle,fill=black,scale=0.5][label=below: {}] at (6,-2) {};
\node[draw,shape=circle,fill=black,scale=0.5][label=below: {$n{-}1$}] at (7,-2) {};
\node[draw,shape=circle,fill=black,scale=0.5][label=below: {$n$}] at (8,-2) {};
\node at (4,-2) {$\cdots$};
\node at (7.5,-1.5) {$4$};
\node at (-1,-2) {$B_n$};

\draw(0,-4)--(3,-4);
\draw(5,-4)--(8,-4);
\draw(7,-3.2)--(7,-4);
\node at (4,-4) {$\cdots$};
\node[draw,shape=circle,fill=black,scale=0.5][label=below: {$1$}] at (0,-4) {};
\node[draw,shape=circle,fill=black,scale=0.5][label=below: {$2$}] at (1,-4) {};
\node[draw,shape=circle,fill=black,scale=0.5][label=below: {$3$}] at (2,-4) {};
\node[draw,shape=circle,fill=black,scale=0.5][label=below: {}] at (3,-4) {};
\node[draw,shape=circle,fill=black,scale=0.5][label=below: {}] at (5,-4) {};
\node[draw,shape=circle,fill=black,scale=0.5][label=below: {}] at (6,-4) {};
\node[draw,shape=circle,fill=black,scale=0.5][label=below: {$n{-}2$}] at (7,-4) {};
\node[draw,shape=circle,fill=black,scale=0.5][label=above: {$n{-}1$}] at (8,-4) {};
\node[draw,shape=circle,fill=black,scale=0.5][label=left: {$n$}] at (7,-3.2) {};
\node at (-1,-4) {$D_n$};

\draw(0,-6)--(1,-6);
\draw(1,-6)--(2,-6);
\node[draw,shape=circle,fill=black,scale=0.5][label=below: {$1$}] at (0,-6) {};
\node[draw,shape=circle,fill=black,scale=0.5][label=below: {$2$}] at (1,-6) {};
\node[draw,shape=circle,fill=black,scale=0.5][label=below: {$3$}] at (2,-6) {};
\node at (1.5,-5.5) {$5$};
\node at (-1,-6) {$H_3$};

\draw(0,-8)--(1,-8);
\draw(2,-8)--(3,-8);
\draw(1,-8)--(2,-8);
\node[draw,shape=circle,fill=black,scale=0.5][label=below: {$1$}] at (0,-8) {};
\node[draw,shape=circle,fill=black,scale=0.5][label=below: {$2$}] at (1,-8) {};
\node[draw,shape=circle,fill=black,scale=0.5][label=below: {$3$}] at (2,-8) {};
\node[draw,shape=circle,fill=black,scale=0.5][label=below: {$4$}] at (3,-8) {};
\node at (2.5,-7.5) {$5$};
\node at (-1,-8) {$H_4$};

\draw(12,0)--(16,0);
\draw(14,0)--(14,0.8);
\node[draw,shape=circle,fill=black,scale=0.5][label=below: {$1$}] at (12,0) {};
\node[draw,shape=circle,fill=black,scale=0.5][label=below: {$2$}] at (13,0) {};
\node[draw,shape=circle,fill=black,scale=0.5][label=below: {$3$}] at (14,0) {};
\node[draw,shape=circle,fill=black,scale=0.5][label=below: {$4$}] at (15,0) {};
\node[draw,shape=circle,fill=black,scale=0.5][label=below: {$5$}] at (16,0) {};
\node[draw,shape=circle,fill=black,scale=0.5][label=right: {$6$}] at (14,0.8) {};
\node at (11,0) {$E_6$};

\draw(12,-2)--(17,-2);
\draw(14,-2)--(14,-1.2);
\node[draw,shape=circle,fill=black,scale=0.5][label=below: {$1$}] at (12,-2) {};
\node[draw,shape=circle,fill=black,scale=0.5][label=below: {$2$}] at (13,-2) {};
\node[draw,shape=circle,fill=black,scale=0.5][label=below: {$3$}] at (14,-2) {};
\node[draw,shape=circle,fill=black,scale=0.5][label=below: {$4$}] at (15,-2) {};
\node[draw,shape=circle,fill=black,scale=0.5][label=below: {$5$}] at (16,-2) {};
\node[draw,shape=circle,fill=black,scale=0.5][label=below: {$6$}] at (17,-2) {};
\node[draw,shape=circle,fill=black,scale=0.5][label=right: {$7$}] at (14,-1.2) {};
\node at (11,-2) {$E_7$};

\draw(12,-4)--(18,-4);
\draw(16,-4)--(16,-3.2);
\node[draw,shape=circle,fill=black,scale=0.5][label=below: {$1$}] at (12,-4) {};
\node[draw,shape=circle,fill=black,scale=0.5][label=below: {$2$}] at (13,-4) {};
\node[draw,shape=circle,fill=black,scale=0.5][label=below: {$3$}] at (14,-4) {};
\node[draw,shape=circle,fill=black,scale=0.5][label=below: {$4$}] at (15,-4) {};
\node[draw,shape=circle,fill=black,scale=0.5][label=below: {$5$}] at (16,-4) {};
\node[draw,shape=circle,fill=black,scale=0.5][label=below: {$6$}] at (17,-4) {};
\node[draw,shape=circle,fill=black,scale=0.5][label=below: {$7$}] at (18,-4) {};
\node[draw,shape=circle,fill=black,scale=0.5][label=right: {$8$}] at (16,-3.2) {};
\node at (11,-4) {$E_8$};

\draw(12,-6)--(13,-6);
\draw(14,-6)--(15,-6);
\draw(13,-6)--(14,-6);
\node[draw,shape=circle,fill=black,scale=0.5][label=below: {$1$}] at (12,-6) {};
\node[draw,shape=circle,fill=black,scale=0.5][label=below: {$2$}] at (13,-6) {};
\node[draw,shape=circle,fill=black,scale=0.5][label=below: {$3$}] at (14,-6) {};
\node[draw,shape=circle,fill=black,scale=0.5][label=below: {$4$}] at (15,-6) {};
\node at (13.5,-5.5) {$4$};
\node at (11,-6) {$F_4$};

\draw(12,-8) -- (13,-8);
\node at (12.5,-7.5) {$m$};
\node[draw,shape=circle,fill=black,scale=0.5][label=below: {$1$}] at (12,-8) {};
\node[draw,shape=circle,fill=black,scale=0.5][label=below: {$2$}] at (13,-8) {};
\node at (11,-8) {$I_2(m)$};
\end{tikzpicture}
\caption{The Coxeter diagrams for the finite irreducible Coxeter groups.}
\label{fig:cox-diagrams}
\end{figure}
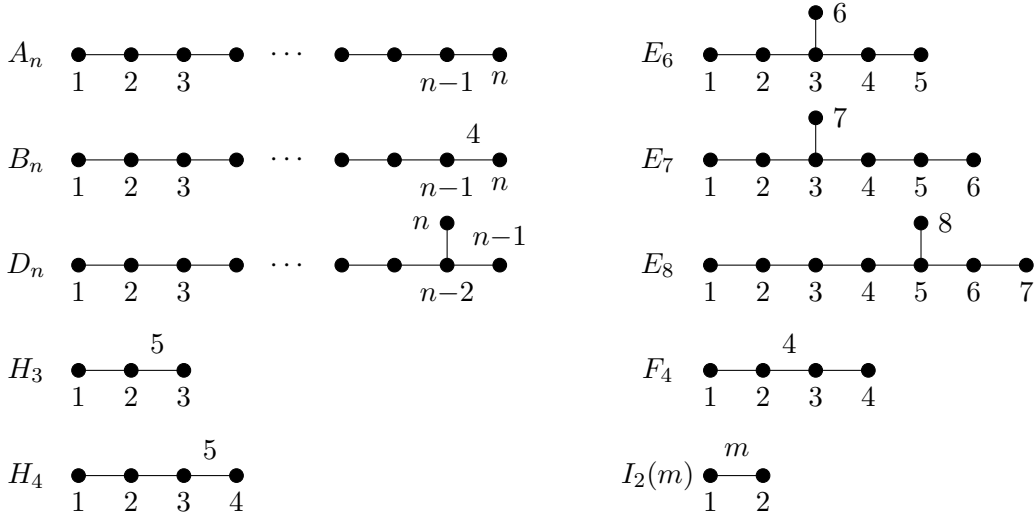

We are now ready to prove \Cref{thm:intro-lehmer-code}.

\begin{proof}[Proof of \Cref{thm:intro-lehmer-code}]
\Cref{prop:min-is-w0} allows us to reduce almost entirely to the case $w$ is the longest element of a parabolic subgroup. If $W$ has type $A_n$ or $B_n$ then $[e,w_0]$ has a Lehmer code since the parabolic quotients of types $A_n/A_{n-1}$ and $B_n/B_{n-1}$ are chains. In type $D_n$ a Lehmer code for $[e,w_0]$ was constructed by Bolognini and Sentinelli \cite{bolognini-sentinelli}. In type $H_3$ the authors of \cite{cubulation} computed a Lehmer code for $[e,w_0]$. Since all parabolic subgroups of these $W$ are also of types known to have Lehmer codes, we have by \Cref{prop:min-is-w0} that in fact all rationally smooth intervals in these types admit Lehmer codes.

On the other hand, in types $F_4$ and $H_4$ it is known \cite{cubulation,sentinelli-zatti} that $[e,w_0]$ does \emph{not} admit a Lehmer code. However, since all proper parabolic subgroups of these groups are known to admit Lehmer codes, \Cref{prop:min-is-w0} implies that all proper rationally smooth intervals $[e,w]$ do admit Lehmer codes. 
\end{proof}

\section{Rationally smooth elements in infinite type}
\label{sec:infinite-type-counterexample}

In light of the utility of \Cref{thm:rs-implies-gr-bp} in the understanding of rationally smooth Schubert varieties in finite type, it is natural to hope that it could be extended to infinite type. Whether this is possible was posed as a question by Oh and Richmond \cite[Q.~8.1]{oh-richmond-chapter} and an affirmative answer was conjectured by Richmond and Slofstra \cite[Conj.~6.5]{richmond-slofstra-fiber-bundle}. We recall their \Cref{conj:infinite-type-grassmannian-bp} here:

\begin{conj}[Richmond--Slofstra \cite{richmond-slofstra-fiber-bundle}; Oh--Richmond \cite{oh-richmond-chapter}]
Let $W$ be any Coxeter group and suppose that $w \in W$ is rationally smooth. Then $w$ has a Grassmannian BP decomposition.
\end{conj}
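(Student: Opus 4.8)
The plan is to attempt to extend to arbitrary Coxeter groups the argument Richmond--Slofstra used to establish \Cref{thm:rs-implies-gr-bp} in finite type.

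\emph{Reductions.} First I would dispose of two easy cases. If some $s\in S$ lies outside $\Supp(w)$, then $w\in W_{S\setminus\{s\}}$, so in the parabolic decomposition $w=w^{S\setminus\{s\}}w_{S\setminus\{s\}}$ one has $w^{S\setminus\{s\}}=e$ and hence $\Supp(w^{S\setminus\{s\}})\cap(S\setminus\{s\})=\emptyset\subset D_L(w_{S\setminus\{s\}})$, so the maximal parabolic $S\setminus\{s\}$ already induces a Grassmannian BP decomposition. If instead $\Supp(w)=S$ but the Coxeter diagram is disconnected, write $W=\prod_i W_{S_i}$ with the $S_i$ its connected components; then $w=\prod_i w_{S_i}$, each $w_{S_i}$ is rationally smooth, and a Grassmannian BP decomposition of any single $w_{S_i}$ inside $W_{S_i}$ (obtained by induction on $|S|$) lifts to one of $w$ in $W$. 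So it remains to treat the case $\Supp(w)=S$ with connected diagram.

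\emph{The key lemma.} The goal becomes: if $w$ is rationally smooth, $\Supp(w)=S$, and the diagram is connected, there is a leaf $s$ of the diagram with $J\coloneqq S\setminus\{s\}\in\BP(w)$. By \Cref{prop:bp-iff-rgf} this is equivalent to the identity $\mathcal{P}(w)=\mathcal{P}^{J}(w^{J})\,\mathcal{P}(w_{J})$. Now $\mathcal{P}(w)$ is palindromic by \Cref{thm:rs-equiv-defs}, and for every maximal $J$ the element $w_J$ is rationally smooth with $\mathcal{P}(w_J)$ palindromic by \Cref{prop:wJ-rs} (which holds for all Coxeter groups); this both constrains the admissible factorizations and sets up an induction on $|S|$. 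It thus suffices to choose the leaf $s$ so that $\mathcal{P}(w)/\mathcal{P}(w_J)$ is a polynomial, and then to recognize that quotient as $\mathcal{P}^{J}(w^{J})$ by controlling the shape of $w^{J}$ in the maximal parabolic quotient $W^{J}$.

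\emph{The classification step and the main obstacle.} In finite type the decisive input is the classification of $J$-rationally smooth elements of $W^{J}$ for $J$ maximal: by \cite[Thm.~5.1]{richmond-slofstra-fiber-bundle} such a $w^{J}$ is, up to a short list of exceptional chains, of the form $w_0(I)^{I\cap J}$ for a sub-parabolic $I\ni s$, after which the BP condition $\Supp(w^{J})\cap J\subset D_L(w_J)$ is verified case by case and one recurses on $w_J$. The plan would be to prove the analogue of this classification for infinite $W$ and run the same recursion. I expect this to be the hard part, for a structural reason: in infinite type there is no longest element $w_0$ and no degrees $d_1,\dots,d_r$, and the finite-type proof selects the correct leaf precisely by matching $\mathcal{P}(w)$ against the factors $[d_i]_q$ of $\mathcal{P}(w_0)=\prod_i[d_i]_q$. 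Absent that anchor, the induction can stall: it is conceivable that for every leaf $s$ the ratio $\mathcal{P}(w)/\mathcal{P}(w_{S\setminus\{s\}})$ fails to equal $\mathcal{P}^{S\setminus\{s\}}(w^{S\setminus\{s\}})$ even though all the palindromicity constraints are satisfied. Deciding whether the leaf-selection step can be forced through --- perhaps via a finer analysis of support graphs and descent sets in the spirit of Richmond--Slofstra's indefinite-type work \cite{richmond-slofstra-triangle} --- is where the real content lies, and the most tractable proving ground is a small affine group of rank three, where $\Phi$, $w^{J}$, and $D_L(w_J)$ can all be written out explicitly.
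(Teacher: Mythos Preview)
The conjecture is \emph{false}, so no proof strategy can succeed. The paper does not prove this statement; it disproves it by exhibiting an explicit counterexample in the affine Weyl group of type $\widetilde{C}_2$ (\Cref{thm:counterexample}). Concretely, with $S=\{r,s,t\}$ and relations $(rs)^4=(st)^4=(rt)^2=e$, the element $w=srstrsr$ is rationally smooth (one checks $\mathcal{P}^{\{r\}}(w^{\{r\}})=1+2q+3q^2+4q^3+3q^4+2q^5+q^6$ is palindromic and applies \Cref{prop:bp-iff-rgf}), yet for each maximal $J$ the decomposition $w=w^Jw_J$ fails the BP condition: e.g.\ for $J=\{r,s\}$ one has $w^J=srst$ and $w_J=rsr$, so $t\in\Supp(w^J)\cap J$ is not in $D_L(w_J)$; the other two maximal $J$ are checked similarly.

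Your instincts are good in one important respect: you correctly locate the obstacle at the classification of $J$-rationally smooth $w^J$ for $J$ maximal, you note that the finite-type argument hinges on $w_0$ and the degrees, and you even point to small rank-three affine groups as the natural testing ground. That is exactly where the counterexample lives. The gap is that you treat the failure of the leaf-selection step as a technical difficulty to be overcome rather than as evidence that the statement is simply wrong. In particular, your ``key lemma'' --- that some leaf $s$ must give $S\setminus\{s\}\in\BP(w)$ --- is precisely what fails for $w=srstrsr$: all three Grassmannian parabolic decompositions are non-BP simultaneously, while all palindromicity constraints you list are satisfied. The moral is that the palindromicity of $\mathcal{P}(w)$ and of each $\mathcal{P}(w_J)$ does not force any quotient $\mathcal{P}(w)/\mathcal{P}(w_J)$ to equal the corresponding $\mathcal{P}^J(w^J)$.
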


\Cref{conj:infinite-type-grassmannian-bp} is known to hold when $W$ is finite \cite{richmond-slofstra-fiber-bundle, ryan, wolper}, of affine type $\widetilde{A}_n$ (see \cite{billey-crites} and the discussion in \cite[\S~6.1]{richmond-slofstra-fiber-bundle}), or in a certain large family of groups of indefinite type \cite{richmond-slofstra-triangle}. In \Cref{thm:counterexample} we give the first known counterexample. The conjecture clearly holds for any element $w$ in any Coxeter group such that $|\Supp(w)|=2$, since $\{s\} \in \BP(w)$ for any $s \in D_R(w)$. We should thus consider groups $W$ of rank three. Many of these are either finite, or of type $\widetilde{A}_2$, or among the groups from \cite{richmond-slofstra-triangle}. One of the ``smallest" remaining possible sources of a counterexample is the affine Weyl group of type $\widetilde{C}_2$.

\begin{theorem}
\label{thm:counterexample}
\Cref{conj:infinite-type-grassmannian-bp} fails for the affine Weyl group $W$ of type $\widetilde{C}_2$.
\end{theorem}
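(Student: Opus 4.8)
\section*{Proof proposal}

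The plan is to exhibit a single element $w$ of the affine Weyl group $W$ of type $\widetilde{C}_2$ that is rationally smooth but is BP at no maximal parabolic. Write $S=\{s_0,s_1,s_2\}$, so that the Coxeter diagram of $\widetilde{C}_2$ is the path $s_0-s_1-s_2$ with both edges labelled $4$. The three maximal parabolic subgroups are $W_{\{s_0,s_1\}}$ and $W_{\{s_1,s_2\}}$, both of type $C_2$, and $W_{\{s_0,s_2\}}\cong\mathbb{Z}/2\times\mathbb{Z}/2$; in particular all three are \emph{finite}. Since a Grassmannian BP decomposition of $w$ is by definition a BP decomposition at one of these three subsets, it suffices to find a rationally smooth $w$ with $J\notin\BP(w)$ for every maximal $J\subset S$.

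I would first record structural constraints that cut the search down sharply. The element $w$ must have full support, since otherwise $w$ lies in a maximal parabolic $W_J$, so $w^J=e$ and $J\in\BP(w)$. Also $w$ has exactly one right descent: by \Cref{prop:singleton-classification} each $s\in D_R(w)$ contributes a singleton $\{s\}\in\BP(w)$, and since $\BP(w)$ is closed under unions by \Cref{thm:rank-three-union-intersection} and every two-element subset of $S$ is maximal, $|D_R(w)|\ge 2$ would immediately produce a Grassmannian BP decomposition. Hence $\BP(w)=\{\emptyset,\{s\},S\}$ for $\{s\}=D_R(w)$, so $\bp(w)$ is the two-element chain $\{s\}<S\setminus\{s\}$, whose top block $S\setminus\{s\}$ is not a singleton; this is precisely the phenomenon that prevents \Cref{prop:intro-linear-extension} from extending to infinite type. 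Note further that the maximal parabolic $J=S\setminus\{s\}$ not containing the descent is automatically \emph{not} in $\BP(w)$: there $w_J=e$, while $\Supp(w^J)\cap J=\Supp(w)\cap J=J\ne\emptyset=D_L(e)$. So only the two maximal parabolics containing $s$ remain to be handled.

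With this in place, a search through the rationally smooth elements of $\widetilde{C}_2$ of full support with a single right descent, in order of length, produces such a $w$, which one records by a reduced word. To certify rational smoothness one enumerates the finite Bruhat interval $[e,w]$ and checks that $\mathcal{P}(w)=\sum_{v\le w}q^{\ell(v)}$ is palindromic --- equivalent to rational smoothness by condition (RS2) of \Cref{thm:rs-equiv-defs} --- or, equivalently, that the Bruhat graph on $[e,w]$ is regular of degree $\ell(w)$. To certify that $w$ has no Grassmannian BP decomposition one takes each of the two maximal $J$ containing the descent, computes the parabolic factorization $w=w^Jw_J$, and checks that $\Supp(w^J)\cap J\not\subseteq D_L(w_J)$, so that $J\notin\BP(w)$ by \Cref{def:BP}; equivalently, by \Cref{prop:bp-iff-rgf}, that $\mathcal{P}(w)\ne\mathcal{P}^J(w^J)\,\mathcal{P}(w_J)$, a polynomial identity which fails by a coefficient comparison.

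The main obstacle is locating the element, since the constraints above do not determine it and most natural candidates fail for one of two opposing reasons. If a reduced word of $w$ ends in the longest element of one of the finite maximal parabolics $W_J$ --- or, more generally, in any element of $W_J$ large enough that the complementary factor $w^J$ retains no support in $J$ beyond $D_L(w_J)$ --- then $w$ is BP at $J$; this is why the short rationally smooth elements of $\widetilde{C}_2$ all admit Grassmannian BP decompositions. On the other hand, lengthening $w$ to avoid such a factorization tends to make $w$ itself a reflection ($w=u s_i u^{-1}$ for a simple reflection $s_i$), or otherwise to unbalance $[e,w]$ and destroy the palindromicity of $\mathcal{P}(w)$. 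Threading between these two failure modes is what the search accomplishes; once a suitable $w$ is found, the remaining verification is the routine finite computation described above.
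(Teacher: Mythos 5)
Your structural reductions are correct and match the paper's own: full support is forced, and $|D_R(w)|=1$ follows from \Cref{prop:singleton-classification} together with the rank-three closure under unions (\Cref{thm:rank-three-union-intersection}), since two singleton descents would union to a maximal parabolic in $\BP(w)$. Your observation that the maximal parabolic \emph{not} containing the unique descent is automatically excluded is also right, and your verification procedure for a candidate (palindromicity of $\mathcal{P}(w)$ for rational smoothness, and \Cref{def:BP} or \Cref{prop:bp-iff-rgf} for the failure of BP) is sound.

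The genuine gap is that you never exhibit the element. The entire content of the theorem is the \emph{existence} of a rationally smooth $w$ with no Grassmannian BP decomposition, and your proposal discharges this by asserting that ``a search \ldots\ produces such a $w$'' --- which is precisely the claim to be proved, not a step in proving it. Your own discussion of the two opposing failure modes makes clear that it is not a priori obvious the search terminates successfully; a reader cannot verify the theorem from your text. The paper closes this gap by pushing the descent analysis further: writing $D_R(w)=\{r\}$, it deduces from the commutation $(rt)^2=e$ that $s\in D_R(wr)$, splits on whether $r\in D_R(wrs)$, rules out the branch $w\leq_L(stsr)^k$ by counting lower Bruhat covers, concludes $w=u\cdot strsr$ length-additively, eliminates $u\in\{e,r,t\}$, and arrives at the explicit witness $w=srstrsr$, certified rationally smooth via $\mathcal{P}^{\{r\}}(w^{\{r\}})=1+2q+3q^2+4q^3+3q^4+2q^5+q^6$ and checked against all three maximal parabolics. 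To complete your argument you must either reproduce such a derivation or simply state a concrete reduced word and record the two finite computations for it.
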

\begin{proof}
We sketch how one can search for a counterexample and then produce one. The group $W$ has simple generating set $S=\{r,s,t\}$ with $(rs)^4=(st)^4=(rt)^2=e$. We want to construct a counterexample $w \in W$. By \Cref{thm:rank-three-union-intersection}, we must have $|D_R(w)|=1$, and by the observation above, we must have $\Supp(w)=S$. Suppose $D_R(w)=\{r\}$. 

As $r$ and $t$ commute, we must have $t \not \in D_R(wr)$, hence $s \in D_R(wr)$. Suppose that $r \not \in D_R(wrs)$, in which case we must have $t \in D_R(wrs)$. Constructing a reduced word for $w$ from right to left, we see that our descent conditions imply that $w \leq_L (stsr)^k$ for some $k$. In order for such $w$ not to have a Grassmannian BP decomposition, we need $\ell(w) \geq 5$, but any element of this form has at least four lower Bruhat covers, so is not rationally smooth.

Thus our counterexample $w$ must have $r \in D_R(wrs)$. The condition $D_R(w)=\{r\}$ then implies that $w$ has a length-additive expression of the form $u strsr$ for some $u \in W$. Taking $u\in \{e,r,t\}$ does not give the desired counterexample since the resulting elements $w$ are not rationally smooth, having more than $|S|=3$ lower Bruhat covers. 

We are thus led to consider $u=sr$ and $w=srstrsr$; this is our counterexample. Indeed, we have Grassmannian parabolic decompositions:
\begin{align*}
    w&=srst \cdot rsr & \text{ for $J=\{r,s\}$,} \\
    w&=srstrs \cdot r & \text{ for $J=\{r,t\}$,} \\
    w&=w \cdot e & \text{ for $J=\{s,t\}$.}
\end{align*}
In each case we see that $J \not \in \BP(w)$. A direct enumeration shows that $\mathcal{P}^{\{r\}}(w^{\{r\}})=1+2q+3q^2+4q^3+3q^4+2q^5+q^6$, so $w$ is rationally smooth by \Cref{prop:bp-iff-rgf} and \Cref{thm:rs-equiv-defs}.
\end{proof}

\begin{remark}
Note that affine Weyl groups are crystallographic, so \Cref{conj:infinite-type-grassmannian-bp} fails even in this most geometrically natural setting. 
\end{remark}

\section*{Acknowledgements}
We are grateful to Allen Knutson for helpful comments. C.G. was supported by the National Science Foundation under award no. DMS-2452032 and by a travel grant from the Simons Foundation. Y.G. was partially supported by NSFC Grant no. 12471309. We are also thankful for the excellent collaborative environment provided by the Sydney Mathematical Research Institute, which we visited in May 2025 for the program ``Modern Perspectives in Representation Theory".

We are grateful to Jan Petr, Anne Thomas, and Pavel Turek for pointing out an error in the definition of ``Lehmer code" in an earlier version.

\bibliographystyle{plain}
\bibliography{main}
\end{document}